\newtheorem{theorem}{Theorem}
\newtheorem{algorithm}[theorem]{Algorithm}
\newtheorem{definition}[theorem]{Definition}
\newcommand{\kw}[1]{\textbf{\textit{#1}}}
\newcommand{\rec}{\operatorname{rec}}
\newcommand{\diam}{\operatorname{diam}}
\newcommand{\dist}{\operatorname{dist}}
\newcommand{\card}{\operatorname{card}}
\DeclareMathOperator{\interior}{int}
\DeclareMathOperator{\Inv}{Inv}
\DeclareMathOperator{\FRRV}{FRRV}
\DeclareMathOperator{\NFRRV}{NFRRV}
\newcommand{\bR}{{\mathbb R}}
\newcommand{\bZ}{{\mathbb Z}}
\newcommand{\cA}{{\mathcal A}}
\newcommand{\cF}{{\mathcal F}}
\newcommand{\cG}{{\mathcal G}}
\newcommand{\cN}{{\mathcal N}}
\newcounter{question}
\begin{document}



\title[Topological-numerical analysis of a 2D neuron model]{Topological-numerical analysis of a two-dimensional discrete neuron model}

\author[P.\ Pilarczyk]{Pawe\l{} Pilarczyk}
\address{
Faculty of Applied Physics and  Mathematics \& Digital Technologies Center,
Gda\'{n}sk University of Technology,
ul.\ Narutowicza 11/12,
80-233 Gda\'{n}sk, Poland.
\newline
Orcid: \href{https://orcid.org/0000-0003-0597-697X}{0000-0003-0597-697X}
}

\author[J.\ Signerska-Rynkowska]{Justyna Signerska-Rynkowska$^*$}
\thanks{$^*$Corresponding author.}
\address{
Dioscuri Centre in Topological Data Analysis, 
Institute of Mathematics of the Polish Academy of Sciences,
ul.\ \'{S}niadeckich 8,
00-656 Warszawa, Poland;
\newline 
and Faculty of Applied Physics and Mathematics \& BioTechMed Center,
Gda\'{n}sk University of Technology,
ul.\ Narutowicza 11/12,
80-233 Gda\'{n}sk, Poland.
\newline
Orcid: \href{https://orcid.org/0000-0002-9704-0425}{0000-0002-9704-0425}
}

\author[G.\ Graff]{Grzegorz Graff}
\address{
Faculty of Applied Physics and Mathematics \& BioTechMed Center,
Gda\'{n}sk University of Technology,
ul.\ Narutowicza 11/12,
80-233 Gda\'{n}sk, Poland.
\newline
Orcid: \href{https://orcid.org/0000-0001-5670-5729}{0000-0001-5670-5729}
}


\begin{abstract}
We conduct computer-assisted analysis of the two-dimensional model of a neuron introduced by Chialvo in 1995 (\textit{Chaos, Solitons \& Fractals} 5, 461--479).
We apply the method for rigorous analysis of global dynamics based on a set-oriented topological approach, introduced by Arai et al.\ in 2009 (\textit{SIAM J.\ Appl.\ Dyn.\ Syst.}\ 8, 757--789) and improved and expanded afterwards.
Additionally, we introduce a new algorithm to analyze the return times inside a chain recurrent set. Based on this analysis, together with the information on the size of the chain recurrent set, we develop a new method that allows one to determine subsets of parameters for which chaotic dynamics may appear.
This approach can be applied to a variety of dynamical systems, and we discuss some of its practical aspects.
The data and the software described in the paper are available at \url{http://www.pawelpilarczyk.com/neuron/}.
\end{abstract}

\subjclass[2020]{Primary: 37B35. Secondary: 37B30, 37M99, 37N25, 92-08.}
\keywords{nonlinear neurodynamics, spiking-bursting oscillations, Conley index, Morse decomposition, rigorous numerics, excitable systems, recurrence, computer-assisted proof}
\maketitle


\noindent
\textbf{In the last three decades, various discrete models of a single neuron were introduced, aimed at reflecting the dynamics of neural processes. Unfortunately, analytical methods offer limited insight into the nature of some phenomena encountered by such models. In this paper, we study the classical multi-parameter Chialvo model by means of a novel topological method that uses set-oriented rigorous numerics combined with computational topology. We enrich the existing tools with a new approach that we call Finite Resolution Recurrence. We obtain a comprehensive picture of global dynamics of the model, and we reveal its bifurcation structure. We combine the recurrence analysis with machine learning methods in order to detect parameter ranges that yield chaotic behavior.}


\section{Introduction}
\label{sec:intro}

 
With the increasing capabilities of contemporary computers, it is possible to apply more and more computationally demanding methods to the analysis of dynamical systems. Such methods may provide comprehensive overview of the dynamics on the one hand, and thorough insight into specific features of the system on the other hand.

In this paper, we discuss an application of a computationally advanced method for the analysis of global dynamics of a system with many parameters. The method was originally introduced in \cite{Arai}, and now we enhance it by introducing Finite Resolution Recurrence (FRR) analysis, as explained in Section~\ref{sec:frr}. We apply this method to the two-dimensional discrete-time semi-dynamical system introduced by Chialvo in \cite{Chialvo} for modeling a single neuron. We describe this model in Section~\ref{sec:model}.


\FloatBarrier
\subsection{Goals and main results}
\label{sec:goals}

The goals of the paper are twofold. First, we aim at obtaining specific results on the Chialvo dynamical model of a neuron that might be of interest to computational neuroscientists. Second, with this motivation in mind, we develop new numerical-topological methods that can be applied to a wide variety of dynamical systems; these methods are thus of importance to the community of applied scientists interested in computational analysis of mathematical models. The remainder of the Introduction section contains an overview of both achievements.

Our first major result regarding the Chialvo model is that we give complete description of bifurcation patterns within a wide range of parameters (see Figure \ref{fig:n12cont}), together with the information on the dynamics inside each continuation class, expressed by means of the Conley Index and Morse decomposition (as explained in Section~\ref{sec:indiv}). We also determine the changes in dynamics caused by changes in parameter values. These results are broadly discussed in Section~\ref{sec:bifurcations} and summarized in Figure~\ref{fig:n12graphs}, and may be perceived as our main finding about the Chialvo model. Let us remark that this part uses interval arithmetic in the computations and the obtained results are rigorous (computer-assisted proof).

The second main result of the paper regarding the Chialvo model is the indication of possible ranges of parameters in which one may expect chaotic dynamics (and other ranges in which one should not expect it). This is achieved by introducing a new method that we call \emph{Finite Resolution Recurrence analysis}; see Section~\ref{sec:recurrence}. We use it for classifying the type of dynamics with the help of machine learning (DBSCAN clustering) in Section~\ref{sec:recTool}. The result of this part of our research is summarized in Figures \ref{fig:learn25d_h10} and~\ref{fig:learn25d_h00} in which we identify six main types of dynamics (including chaos) and the corresponding parameter ranges.
\label{rev:nonrigorous1}
Although computation of Finite Resolution Recurrence is rigorous and one can use it to prove certain features of the dynamical system (as we explain in Section~\ref{sec:frr}), in this part of the paper we use it in a heuristic way to draw non-rigorous yet meaningful conclusions.
\label{rev:spikingMentioned}
Our other heuristic result is the identification of regions of parameters in which spiking-bursting oscillations are likely to appear; this result is discussed in Section~\ref{sec:sizes}.

Our numerical-topological methods are briefly introduced in Section~\ref{sec:approachOverview}, and some of their advantages over the ``classical'' approach are gathered in Section~\ref{sec:advantages}. We emphasize the fact that our methods are universal, i.e., the scope of their applicability is not limited to the Chialvo map, but they can also be applied to various other kinds of dynamical systems.


\FloatBarrier
\subsection{Overview of our numerical-topological approach}
\label{sec:approachOverview}

Our approach uses rigorous numerical methods and a topological approach based on the Conley index and Morse decompositions, and provides mathematically validated results concerning the qualitative dynamics of the system. The main idea is to cover the phase space (a subset of $\bR^n$) by means of a rectangular grid ($n$-dimensional rectangular boxes), and to use interval arithmetic to compute an outer estimate of the map on the grid elements. This construction gives rise to a directed graph, and fast graph algorithms allow one to enclose all the recurrent dynamics in bounded subsets, further called Morse sets, built of the grid elements, so that the dynamics outside the collection of these subsets is gradient-like. The entire range of parameters under consideration is split into classes in such a way that parameters within one class yield equivalent dynamics. We outline this method in Sections \ref{sec:glob}--\ref{sec:indiv}. We show its practical application to obtain a comprehensive overview of the different types of dynamics that appear in the Chialvo model in Sections~\ref{sec:appl}--\ref{sec:bifurcations}.

Since existence of chaotic dynamics implies recurrence in large areas of the phase space (existence of large ``strange attractors''), construction of an outer estimate for the chain recurrent set results in this case in just one large isolating neighborhood, and therefore  the approach based on constructing a Morse decomposition provides very little information on the actual dynamics. In order to address this problem, we introduce new algorithmic methods for the analysis of the directed graph that represents the map in order to get insight into the dynamics \emph{inside} this kind of a large Morse set. We consider this a non-trivial extension of the method described in \cite{Arai} that provides new and important information on the dynamics. In particular, we introduce the notion of Finite Resolution Recurrence (FRR for short) in Section~\ref{sec:frr}, and we show its application to a few cases in the Chialvo model in Section~\ref{sec:recComp}. We then propose (in Sections~\ref{sec:FinVar}--\ref{sec:NormVar}) to analyze the variation of FRR values inside the large Morse set,  and we conduct comprehensive analysis of Normalized FRR Variation (NFRRV for short) in Section~\ref{sec:varComp}.

Finally, in Section \ref{sec:recTool}, we develop certain heuristic indicators of chaotic dynamics that are based on the FRR analysis and apply them to the Chialvo model. The results in this section are no longer rigorous; these are heuristics supported by machine learning and numerical evidence.
\label{rev:nonrigorous2}

\begin{figure}[htbp]
\centering
\includegraphics[width=0.75\textwidth]{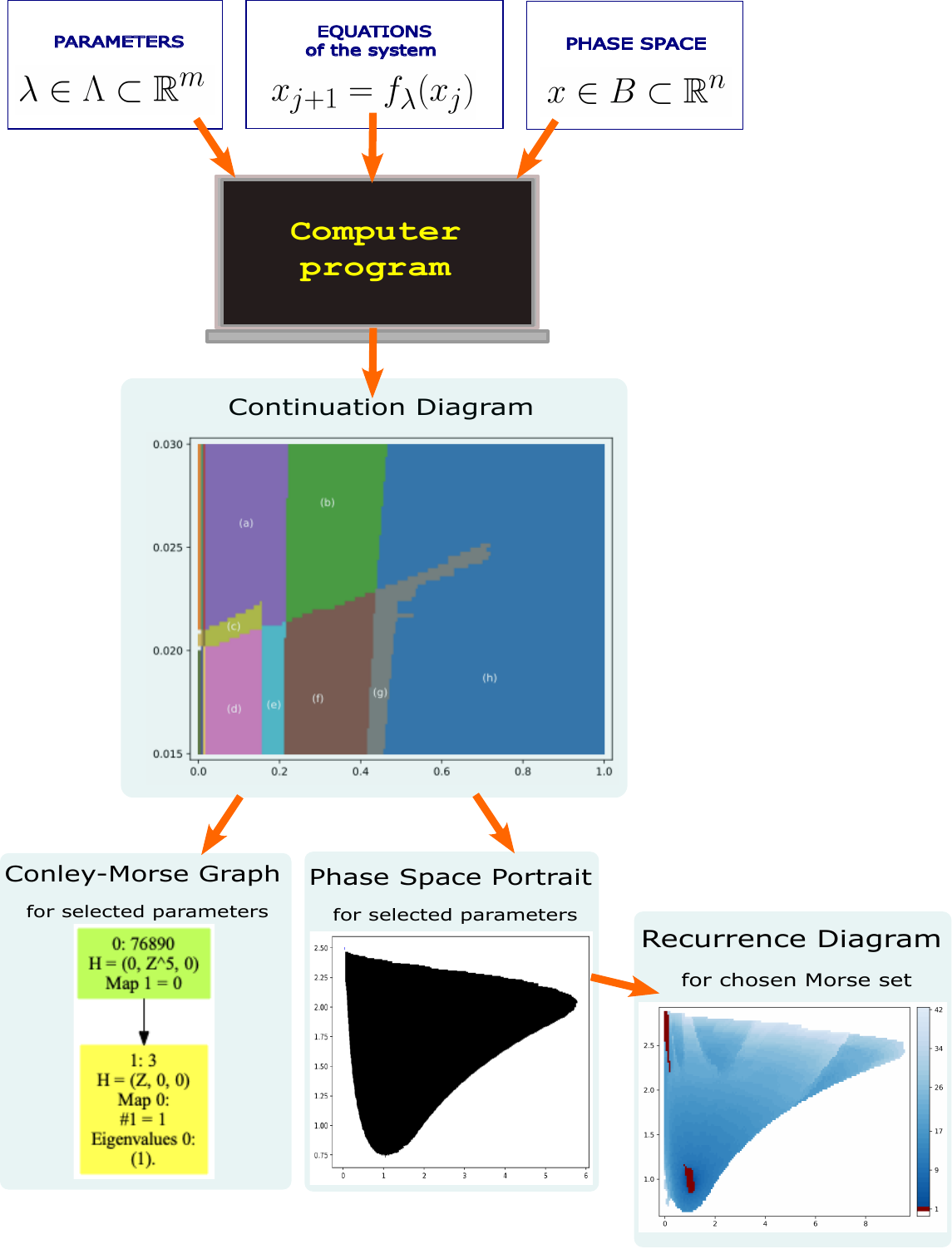} 
\caption{\label{fig:Summary}%
Schematic illustration of the main steps of our method. The equations of the system, together with a chosen range of parameters $\Lambda\subset \mathbb{R}^m$ and a specification of the phase space ($B\subset \mathbb{R}^n$) constitute input to a computer program: an implementation of the method described in Sections \ref{sec:glob}, \ref{sec:indiv}, \ref{sec:frr} and Appendix~\ref{app:recurrence}. The computer program can be treated as a ``black box'' that produces desired information about the system on its output. The first thing to obtain is a Continuation Diagram that partitions the parameter space $\Lambda$ into classes corresponding to different types of dynamics. Next, for each particular value of the parameters, a Conley-Morse Graph and the corresponding Phase Space Portrait are obtained. This provides information on invariant sets. Finally, the dynamics within individual Morse sets can be examined in more detail by means of Recurrence Diagrams that allow one to deduce possible existence of periodic attractors and chaotic dynamics.}
\end{figure}

One could summarize the main ideas of our method for comprehensive analysis of a dynamical system at finite resolution in the following way (see Figure~\ref{fig:Summary}):
\begin{enumerate}
\setlength{\itemsep}{0pt}
\renewcommand{\theenumi}{\alph{enumi}}
\item\label{itemSplit}
We split the dynamics into chain recurrent sets (Morse sets)
and the non-recurrent set (gradient-like dynamics).
\item\label{itemConley}
We obtain information about the Morse sets by looking at their boundary (computing their Conley indices); in particular, a nontrivial Conley index implies the existence of a non-empty isolated invariant subset of the Morse set.
\item\label{itemRigorous}
Computation of (\ref{itemSplit}) and (\ref{itemConley}) involves rigorous numerical methods (based on interval arithmetic) and provides mathematically reliable results (computer-assisted proof).
\item\label{itemFRR}
We study the dynamics \emph{inside} the Morse sets using a new method: Finite Resolution Recurrence analysis (also providing rigorous results).
\item\label{itemAll}
We conduct (\ref{itemSplit})--(\ref{itemFRR}) for a grid of parameters of the system varying in some bounded ranges, and we gather the information in order to classify the types of dynamics encountered (rigorous results), and search for spiking-bursting oscillations and chaotic dynamics (heuristic non-rigorous results).
\end{enumerate}
In particular, by applying our approach to the classical Chialvo model \eqref{main} we are able to obtain precise and comprehensive description of the dynamics for a large range of parameters considered in \cite{Chialvo}.


\FloatBarrier
\subsection{Advantages of our method in comparison to ``classical'' approach}
\label{sec:advantages}

As it will be made clear in the sequel, the set-oriented topological method that we apply has several advantages over purely analytical methods, and over plain numerical simulations as well. One could argue that analytical methods typically focus on finding equilibria of the system and determining their stability. On the other hand, numerical simulations are usually limited to iterating individual trajectories and thus their ability is limited to finding stable invariant sets only, not to mention their vulnerability to round-off or approximation errors. In contrast to this, our approach detects all kinds of recurrent dynamics (also unstable) in a given region of the phase space, and provides mathematically reliable results.

It is also worth pointing out that introducing advanced methods for investigating invariant sets and their structure is especially desirable in discrete-time systems already in dimension $2$, such as the Chialvo model. The reason is that such systems are much more demanding than their ODE ``counterparts.'' For example, in dimension~$2$, the information on the invariant (limit) sets in continuous-time systems can be concluded from the Poincar\'{e}-Bendixson Theorem, the shape of stable and unstable manifolds of saddle fixed points, and other elementary considerations. Indeed, as Chialvo noticed in~\cite{Chialvo}, ``Trajectories associated with iterated maps are sets of discrete points, and not continuous curves as in ODE. In two-dimensional ODE, orbits or stable and unstable manifolds partition the phase space in distinct compact subsets with their specific attractors. Structure of the stable sets might be more complicated for 2D iterated maps.'' 

On the other hand, in the analysis of one-dimensional discrete models, one can benefit from the theory of circle maps or the theory of interval maps; both have undergone rapid development in recent decades. In particular, the theory of $S$-unimodal maps can be successfully applied to obtain rigorous results for the one-dimensional Chialvo model \cite{Chialvo1dim}. Unfortunately, in the discrete setting, increasing the dimension from one to two makes the analysis considerably harder, as no such powerful analytical tools exist for discrete models in two dimensions. Hence there is a strong demand for reliable computational techniques in studying (discrete) higher dimensional systems. This demand has been one of our main motivations for conducting the research described in this paper.


\FloatBarrier
\subsection{Structure of the paper}
\label{sec:structure}

The core of the paper is split into three sections. Introduction of theoretical basis and description of computational methods is directly followed by application to the Chialvo model.

In Section~\ref{sec:model}, we describe a $2$-dimensional discrete-time dynamical system introduced by Chialvo for modeling an individual neuron.

In Section~\ref{sec:globaldyn}, we explain the set-oriented topological method for comprehensive analysis of a dynamical system, and we apply it to the Chialvo model. In particular, we explain the various kinds of global dynamics that we encountered in the phase space across the analyzed ranges of parameters, we describe possible bifurcations found in the system, and we give heuristics on where one could search for spiking-bursting oscillations and chaotic dynamics.

In Section~\ref{sec:recurrence}, we introduce the Finite Resolution Recurrence (FRR) and its variation (FRRV), also normalized (NFRRV) as new mathematical tools for deep analysis of recurrent dynamics at limited resolution. We show the results of applying this method to the Chialvo model.

Finally, in Section~\ref{sec:recTool}, we use FRR as a tool in classification of dynamics, and we demonstrate its usefulness as an indicator of the existence of chaotic dynamics.


\section{Model}
\label{sec:model}
The following model of a single neuron was proposed by D.\ Chialvo in \cite{Chialvo}:
\begin{equation}
\label{main}
\left\{\begin{array}{rcl}
x_{n+1} & = & x_n^2 \exp(y_n-x_n) + k,
\\
y_{n+1} & = & ay_n -bx_n + c.
\end{array}
\right.
\end{equation}
In this model, $x$ stands for the membrane (voltage) potential of a neuron. It is the most important dynamical variable in all neuron models. However, in order to model neuron kinetics in a more realistic way than by means of a single variable, at least one other dynamical variable must be included in the model. Therefore, the system \eqref{main} contains also $y$ that acts as a recovery-like variable. There are four real parameters in this model: $k$ which can be interpreted as an additive perturbation or a current input the neuron is receiving, $a>0$ which is the time constant of neuron's recovery, the activation (voltage) dependence of the recovery process $b>0$, and the offset $c$.

This discrete model, in which $x_n$ and $y_n$ are values of the voltage and the recovery variable at the consecutive time units $n$, belongs to the class of so-called map-based models. Such models have received a lot of attention recently, and include the famous Rulkov models \cite{rulkov2001,rulkov2002,ShilnikovRulkov2004} \label{extra} and many others (see also review articles \cite{courbage2010} and  \cite{ibarz2011}). For completeness, we also mention the fact that neurons can be modeled by continuous dynamical systems, i.e., ordinary differential equations, dating back to the pioneering work of Hodgkin and Huxley \cite{Hodgkin}, or by hybrid systems (see e.g. \cite{BretteGerstner,izi2003,wild1,typeIII,mma,touboul2009}).  Models taking into account the propagation of the voltage through synapses or models of neural networks often incorporate PDEs and stochastic processes (see e.g. \cite{Burkitt2006,torres2021}). 
Although some might consider map-based models too simplified from the biological point of view, their biological relevance in fact can be sometimes satisfactory. Their important advantage is that they are often computationally plausible as components of larger systems. Due to the tremendous complexity of real neuronal systems, map-based models appeared as models that are simple enough to be dealt with, yet able to capture the most relevant properties of the cell. Map-based models can sometimes be seen as discretizations of ODE-based models.  

It seems that the Chialvo model \eqref{main} is not a direct discretization of any of the popular ODE models. However,  as noticed by Chialvo himself (see \cite{Chialvo}), the shape of nullclines reminds that of some two-dimensional ODE excitable systems. Moreover, the equation  \eqref{main} fulfills the most common general form of map-based models of a neuron (compare with \cite{ibarz2011}):
\begin{equation} \label{GeneralMapBased}
\left\{\begin{array}{rcl}
x(t+1) & = & F\big(x(t), k\pm y(t)\big),\\
y(t + 1) & = & y(t) \mp \varepsilon \big(x(t) - q y(t) - \sigma\big).
\end{array}
\right.
\end{equation}

Before we proceed with our analyses, let us briefly summarize main properties of the dynamics of \eqref{main} that have already been described in the literature.
Note that, in general, the overall analysis of the phase plane dynamics for the model \eqref{main} has not been conducted. There are, however, valuable observations for some ranges of parameters supported by  numerical simulations.  

In the paper \cite{Chialvo}, in which this model was introduced, Chialvo discusses only the case $k=0$, and then $k$ of small positive value with two prescribed choices of the other parameters.
For $k=0$, the point $(x_{\textrm{f}\,0},y_{\textrm{f}\,0}):=(0, c/(1-a))$ is always a stable (attracting) fixed point of the system (since the corresponding eigenvalues are $0$ and $a<1$).
For $k\neq 0$, Chialvo \cite{Chialvo} treats only the case when the phase portrait has exactly one equilibrium point (which happens, e.g., for $a=0.89$, $b=0.6$ and $c=0.28$) and treats $k$ as the bifurcation parameter, while $a$, $b$ and $c$ are usually kept constant.  For this particular choice of parameter values and small values of $k$, the unique fixed point is  globally attracting and this parameter regime is referred to as \emph{quiescent-excitable} regime. For larger values of $k$ (e.g., for $k=0.1$), the unique fixed point is no longer stable, and oscillatory solutions might appear.  This phenomenologically corresponds to the bifurcation from  quiescent-excitable to oscillatory solution.  When the value of $k$ is increased a bit  more, chaotic-like behavior was observed in \cite{Chialvo} for some values of the parameters. For example, when $b$ is decreased from $0.6$ to $0.18$, and  $a=0.89$, $c=0.28$, $k=0.03$ then instead of periodic-like oscillations the solution displays chaotic bursting oscillations with large spikes often followed by a few oscillations of smaller amplitude, resembling so-called mixed-mode oscillations (see Figure~10 in \cite{Chialvo}). 

The system \eqref{main} can have up to three fixed points and their  existence and stability as well as bifurcations were studied analytically in \cite{Jing}.
Numerical simulations described in another work \cite{NewPaperOnChialvo} suggest the existence of an interesting structure in the $(a,b)$-parameter space (with $c=0.28$ and $k=0.04$ fixed), including comb-shaped periodic regions (corresponding to period-incrementing bifurcations), Arnold tongue structures (due to the period-doubling bifurcations) and shrimp-shaped structures immersed in large chaotic regions.

Let us also mention the fact that the recent work \cite{Chialvo1dim} studies in detail the dynamics of the reduced Chialvo model, i.e., the evaluation of the membrane voltage given by the first equation in \eqref{main}, with $y_n=\mathrm{const}$ treated as a parameter. These purely analytical studies take advantage of the fact that the one-dimensional map $F_y(x)=x^2\exp(y-x)+k$ of the $x$-variable, restricted to the invariant interval of interest, is unimodal with negative Schwarzian derivative, which makes it possible to use the well-developed theory of S-unimodal maps. 

Despite all these important observations mentioned above, it is clear that the description of the dynamics of the model \eqref{main} in the existing literature is very incomplete. In our research described in this paper, we aimed at obtaining a better understanding of the two-dimensional model \eqref{main} in a reasonable parameter range. We focused mainly on investigating the set of parameters that covered most of the analyses conducted in \cite{Chialvo}. Specifically,  we fixed $a = 0.89$ and $c = 0.28$, and we made the other two parameters vary. We first studied the range $(b,k) \in \Lambda_1 := [0,1] \times [0,0.2]$ (see Appendix~\ref{app:continuation}), and based on these results we decided to restrict our attention to its sub-region $\Lambda_2 := [0, 1] \times [0.015, 0.030]$ (see Sections \ref{sec:appl}--\ref{sec:bifurcations} for the detailed results). In addition to the typical behavior observed in~\cite{Chialvo}, including attracting points, attractor-repeller pairs consisting of a point and a periodic orbit, and chaotic behavior as well, we detected many regions with other types of interesting dynamics, especially for very small values of the parameters $b$ and $k$.


\section{Set-oriented numerical-topological analysis of global dynamics}
\label{sec:globaldyn}

In this section, we describe the method for computer-assisted analysis of dynamics in a system with a few  parameters, first introduced in \cite{Arai} for discrete-time dynamical systems, and further extended in \cite{Knipl} to flows. We also introduce a new method based on the notion of Finite Resolution Recurrence that provides insight into the dynamics inside chain recurrent sets. This approach provides a considerable improvement, because -- to the best of our knowledge -- in methods based on \cite{Arai} introduced so far this kind of analysis that would reveal the internal structure of chain recurrent sets was never proposed.

The computations are conducted for entire intervals of parameters, and the results are valid for each individual parameter in the interval. This allows one to determine the dynamical features for entire parameter ranges if those are subdivided into smaller subsets. By using interval arithmetic and controlling the rounding of floating point numbers, the method provides mathematically rigorous results (a.k.a.\ \emph{computer-assisted proof}).

We first describe the set-oriented rigorous numerical method for the computation of Morse decomposition of the dynamics on a given phase space across a fixed range of parameters in Section~\ref{sec:glob}. The first paragraph of that section is a concise description of the process, and the remainder contains all the technical details that can be skipped on the first reading. The result of applying this method to the Chialvo model is described in Section~\ref{sec:appl}, together with information on how to use the database available for interactive on-line viewing at~\cite{www}, and the technical details are gathered in Appendix~\ref{app:continuation}. Then in Section~\ref{sec:indiv}, we explain the topological approach to the analysis of individual components of recurrent dynamics found in the previous step (Morse sets) by means of the Conley index. The description is aimed at non-users of the Conley index theory and provides information necessary to understand our results discussed in Section~\ref{sec:bifurcations}, in which we provide a comprehensive overview of all the types of dynamics that we found in the Chialvo model. Finally, in Section~\ref{sec:sizes}, we provide a heuristic method for using the results of computations conducted in Section~\ref{sec:appl} to detect regions of parameters for which spiking-bursting oscillations or chaotic dynamics might appear.


\FloatBarrier
\subsection{Automatic analysis of global dynamics}
\label{sec:glob}

While numerical simulations based on computing individual trajectories may provide some insight into the dynamics, considerably better understanding may be achieved by set-oriented methods in which entire sets are iterated by the dynamical system. One of the first software packages that used this approach was GAIO~\cite{GAIO}. The first step is to partition the phase space into a collection of bounded sets with simple structure (such as squares or cubes), further called \emph{grid elements}. By considering images of these sets, one can represent a map that generates a discrete-time dynamical system as a directed graph on grid elements. Numerical methods based on interval arithmetic provide means for computing an outer enclosure of the map rigorously and effectively. And here comes the key idea. Effective graph algorithms applied to such a representation of \label{themap} the map make it possible to capture all
the chain recurrent dynamics contained in a collection of subsets of the phase
space, called  \emph{Morse sets}. In particular, this construction proves that the dynamics in the remaining part of the phase space is gradient-like (see also \cite{BK2006,KMV2005}). By determining possible connections between the chain recurrent components, one constructs so-called \emph{Morse decomposition}, and the Conley index \cite{Conley} provides additional information about the invariant part of the Morse sets. Finally, the set of all the possible values of parameters within prescribed ranges is split into subsets of parameters that yield equivalent Morse decompositions related by continuation, thus called \emph{continuation classes}.

The remainder of this subsection contains formal definitions of what has just been explained intuitively in the paragraph above, and can be skipped on the first reading.

Formally, let $X$ be a topological space,
and let $f \colon X \to X$ be a continuous map.
$S \subset X$ is called an {\em invariant set} with respect to $f$
if $f (S) = S$.
The {\em invariant part} of a set $N \subset X$
is an invariant set defined as $\Inv N := \bigcup \{S \subset N : f (S) = S\}$.
An {\em isolating neighborhood}
is a compact set $N$ whose invariant part is contained in its interior:
$\Inv N \subset \interior N$.
A set $S$ is called an {\em isolated invariant set}
if $S = \Inv N$ for some isolating neighborhood $N$.

A {\em Morse decomposition} of $X$
with respect to $f$ is a finite collection
of disjoint isolated invariant sets (called {\em Morse sets})
$S_1,\ldots,S_p$ with a strict partial ordering $\prec$
on the index set $\{1,\ldots,p\}$
such that for every $x \in X \setminus (S_1 \cup \cdots \cup S_p)$
and for every orbit $\{\gamma_k\}_{k \in \bZ}$
(that is, a bi-infinite sequence for which $f (\gamma_k) = \gamma_{k + 1}$) such that $\gamma_0 = x$
there exist indices $i \prec j$
such that $\gamma_k \to S_i$ as $k \to \infty$
and $\gamma_k \to S_j$ as $k \to -\infty$.

Since it is not possible, in general, to construct numerically a valid Morse decomposition of a compact set $B \subset \bR^n$,
we construct isolating neighborhoods of the Morse sets instead.
This is a family of isolating neighborhoods $N_1, \ldots, N_p \subset  B$
with a strict partial ordering $\prec$ on the set of their indices
such that the family $\{S_i := \Inv N_i : i = 1, \ldots, p\}$
forms a Morse decomposition of $\Inv B$
with the ordering $\prec$.
The sets $N_i$, $i = 1, \ldots, p$, will be called {\em numerical Morse sets}, and the collection $N_1, \ldots, N_p$ is then a {\em numerical Morse decomposition}.

We visualize a numerical Morse decomposition by means of a directed graph that corresponds to the transitive reduction of the relation $\prec$. Vertices in this graph correspond to the numerical Morse sets, and a path from $N_i$ to $N_j$
indicates the possibility of existence of a connecting orbit between them.

We construct numerical Morse sets as finite unions of small rectangles in $\bR^n$ whose vertices form a regular mesh. Specifically, a {\em rectangular set} in $\bR^n$ is a product of compact intervals.
Given a rectangular set
\[
B = [a_1, a_1 + \delta_1] \times
\cdots \times [a_n, a_n + \delta_n] \subset \bR^n
\]
and integer numbers $d_1, \ldots, d_n > 0$,
the set
\[
\cG_{d_1, \ldots, d_n} (B) :=
\bigg\{
\prod_{i=1}^{n} [a_i + \frac{j_i}{d_{i}} \delta_i,
a_i + \frac{j_i + 1}{d_{i}} \delta_i] : \\
j_i \in \{0, \ldots, d_i - 1\},
i \in \{1, \ldots, n\}
\bigg\}
\]
is called the
{\em $d_1 \times \cdots \times d_n$ uniform rectangular grid in B}.
The grid elements are referred to by the $n$-tuples $(j_1, \ldots, j_n)$.
The $n$-tuple of integers $(d_1, \ldots, d_n)$
is called the {\em resolution} in $B$. We shall often write $\cG(B)$ instead of $\cG_{d_1, \ldots, d_n} (B)$ for short.

Note that in the planar case, a numerical Morse decomposition in $B \subset \bR^2$ can be visualized as a digital raster image whose pixels correspond to the individual boxes in each of the numerical Morse sets. For convenience, each numerical Morse set can be plotted with a different color. Obviously, this kind of visualization should be accompanied by a graph that shows the ordering $\prec$.

A multivalued map $\cF \colon \cG(B) \multimap \cG(R)$, where $B \subset R \subset \bR^n$ and $\cG(R)$ is a uniform rectangular grid containing $\cG(B)$,
is called a \emph{representation} of a continuous map $f \colon B \to R$ if the image $f (Q)$ of every grid element $Q \in \cG(B)$ is contained in the interior of the union of grid elements in $\cF(Q)$. If $|\cA|$ denotes the union of all the grid elements that belong to the set $\cA \subset \cG(R)$ then this condition can be written as follows:
\begin{equation}
\label{representation}
f(Q) \subset \interior |\cF(Q)| \text{ for every } Q \in \cG(B).
\end{equation}

A representation corresponds to a directed graph $G = (V,E)$ whose vertices are grid elements and directed edges are defined by the mapping $\cF$ as follows: $(P,Q) \in E \iff Q \in \cF(P)$.
It is a remarkable fact that the decomposition of $G$ into \emph{strongly connected path components} (maximal collections of vertices connected in both directions by paths of nonzero length) yields a numerical Morse decomposition in $B$, provided that $N_i \subset \interior B$ for all the numerical Morse sets $N_i$; see \cite{Arai,BK2006,KMV2005} for justification.

Now consider a dynamical system that depends on $m$ parameters.
Consider a rectangular set $\Lambda \in \bR^m$ of all the $m$ parameter values of interest. Take a uniform rectangular grid $\cG_{s_1,\ldots,s_m}(\Lambda)$ for some positive integers $s_1,\ldots,s_m$.
Using interval arithmetic, one can compute a representation $\cF_L$ valid for the maps $f_\lambda$ for all the parameters $\lambda \in L$.
Then the numerical Morse decomposition computed for $F_L$ yields a collection of isolating neighborhoods of a Morse decomposition for each $f_\lambda$, where $\lambda \in L$.

Given two parameter boxes $L_1,L_2 \in \cG(\Lambda)$, we use the clutching graph introduced in \cite[\S 3.2]{Arai} to check if the numerical Morse sets in the computed two numerical Morse decompositions are in one-to-one correspondence. If this is the case then continuation of Morse decompositions has been proved and we consider the dynamics found for the parameter boxes $L_1,L_2$ equivalent. A visualization of the collection of equivalence classes with respect to this relation is called a \emph{continuation diagram}.


\FloatBarrier
\subsection{Analysis of dynamics in individual Morse sets using the Conley index}
\label{sec:indiv}

Informally speaking, the dynamics in the Morse sets could be of three types. In the first type, all trajectories that enter the set stay there forever in forward time, like in the examples shown in Figure~\ref{fig:indStable}. In the second type, there are some trajectories that stay in the Morse set in forward time, but there are also some other trajectories that exit the set. This situation is shown in the examples in Figure~\ref{fig:indUnstable}. Finally, it may be the case that all trajectories that enter the set will leave it in forward time, and thus there is no trajectory that stays inside forever. Two such examples are shown in Figure~\ref{fig:indTrivial}.

\begin{figure}[b]
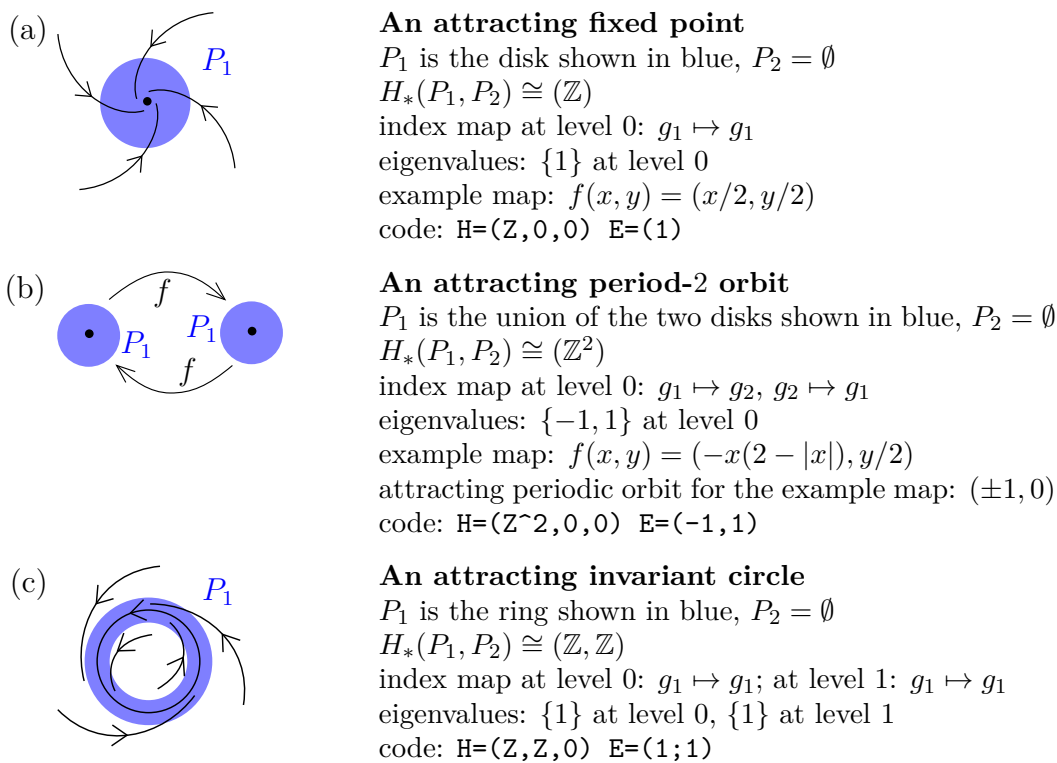

\begin{minipage}[t]{3cm}
\vspace{0pt}
\setlength{\unitlength}{2.5cm}%
\begin{picture}(1,0.94787965)%
\put(0,0){\includegraphics[width=\unitlength,page=1]{\detokenize{ind_attr_point}}}%
\put(0.76666196,0.64080595){\color[rgb]{0,0,1}$P_1$}%
\put(-0.05,0.93){\makebox(0,0)[rt]{(a)}}%
\end{picture}%
\end{minipage} \hspace{1cm}
\begin{minipage}[t]{9cm}
\vspace{0pt}\small
\textbf{An attracting fixed point} \\
$P_1$ is the disk shown in blue,
$P_2 = \emptyset$ \\
$H_*(P_1, P_2) \cong (\bZ)$ \\
index map at level $0$: $g_1 \mapsto g_1$ \\
eigenvalues: $\{1\}$ at level $0$ \\
example map: $f(x,y)=(x/2,y/2)$ \\
code: \texttt{H=(Z,0,0) E=(1)}
\end{minipage}
\vskip 10pt
\begin{minipage}[t]{3cm}
\vspace{0pt}
\setlength{\unitlength}{3cm}%
\begin{picture}(1,0.54164849)%
\put(0,0){\includegraphics[width=\unitlength,page=1]{\detokenize{ind_attr_period2}}}%
\put(0.42437613,0.41){\color[rgb]{0,0,0}$f$}%
\put(0,0){\includegraphics[width=\unitlength,page=2]{\detokenize{ind_attr_period2}}}%
\put(0.53536928,0.06){\color[rgb]{0,0,0}$f$}%
\put(0.28,0.18){\color[rgb]{0,0,1}$P_1$}%
\put(0.57,0.25){\color[rgb]{0,0,1}$P_1$}%
\put(-0.05,0.53){\makebox(0,0)[rt]{(b)}}%
\end{picture}%
\end{minipage} \hspace{1cm}
\begin{minipage}[t]{9cm}
\vspace{0pt}\small
\textbf{An attracting period-$2$ orbit} \\
$P_1$ is the union of the two disks shown in blue,
$P_2 = \emptyset$ \\
$H_*(P_1, P_2) \cong (\bZ^2)$ \\
index map at level $0$: $g_1 \mapsto g_2$, $g_2 \mapsto g_1$ \\
eigenvalues: $\{-1, 1\}$ at level $0$ \\
example map: $f(x,y)=(-x(2-|x|),y/2)$ \\
attracting periodic orbit for the example map: $(\pm 1,0)$ \\
code: \texttt{H=(Z\symbol{94}2,0,0) E=(-1,1)}
\end{minipage}
\vskip 10pt
\begin{minipage}[t]{3cm}
\vspace{0pt}
\setlength{\unitlength}{2.5cm}%
\begin{picture}(1,0.96498198)%
\put(0,0){\includegraphics[width=\unitlength,page=1]{\detokenize{ind_attr_circle}}}%
\put(0.77,0.78){\color[rgb]{0,0,1}$P_1$}%
\put(0,0){\includegraphics[width=\unitlength,page=2]{\detokenize{ind_attr_circle}}}%
\put(-0.05,0.95){\makebox(0,0)[rt]{(c)}}%
\end{picture}%
\end{minipage} \hspace{1cm}
\begin{minipage}[t]{9cm}
\vspace{0pt}\small
\textbf{An attracting invariant circle} \\
$P_1$ is the ring shown in blue,
$P_2 = \emptyset$ \\
$H_*(P_1, P_2) \cong (\bZ, \bZ)$ \\
index map at level $0$: $g_1 \mapsto g_1$;
at level $1$: $g_1 \mapsto g_1$ \\
eigenvalues: $\{1\}$ at level $0$, $\{1\}$ at level $1$ \\
code: \texttt{H=(Z,Z,0) E=(1;1)}
\end{minipage}
\vskip 10pt
\caption{\label{fig:indStable}
Typical Conley indices for stable isolated invariant sets that appear in actual applied dynamical systems. The examples (a) and (c) may come from a time-$t$ map for a flow (the index map is thus the identity).}
\end{figure}

\begin{figure}[htbp]
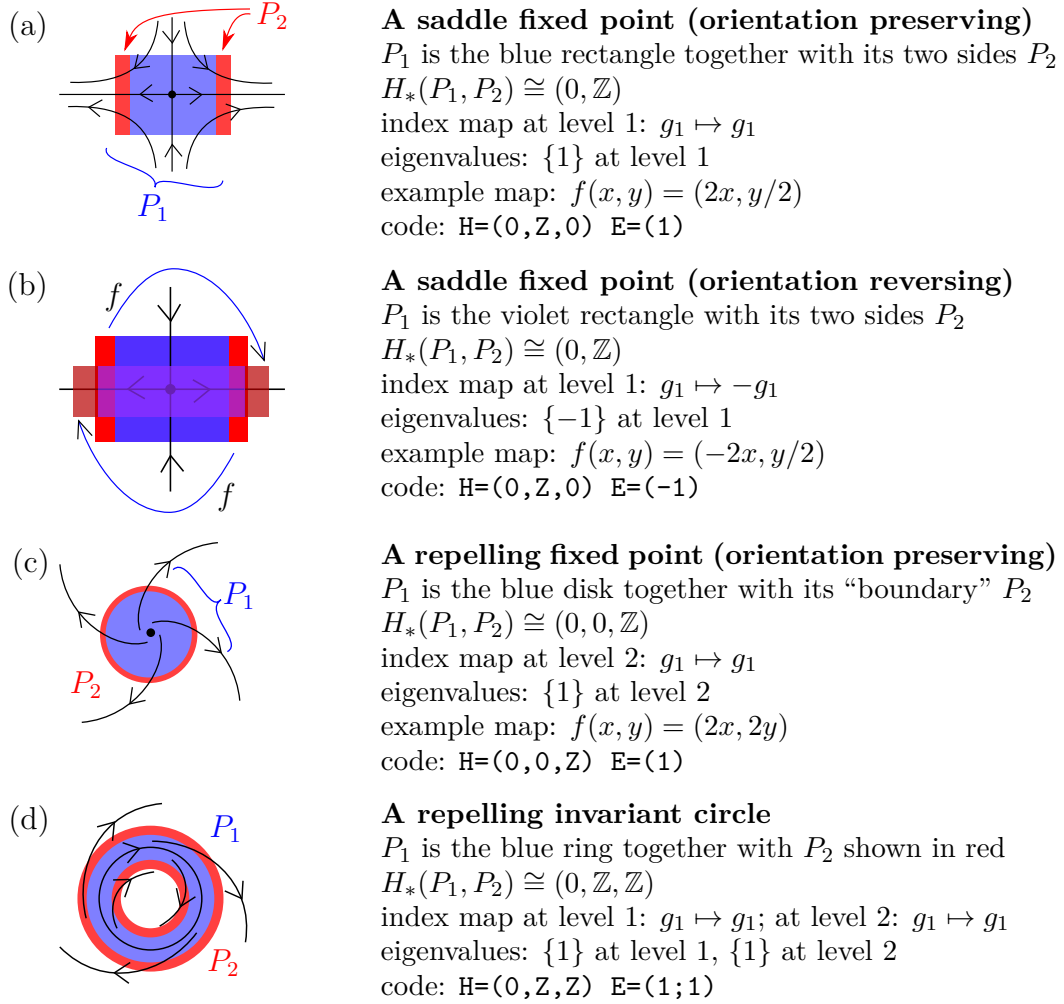

\begin{minipage}[t]{3cm}
\vspace{0pt}
\setlength{\unitlength}{3cm}%
\begin{picture}(1,0.9258462)%
\put(0,0){\includegraphics[width=\unitlength,page=1]{\detokenize{ind_saddle_point}}}%
\put(0.34310559,0){\color[rgb]{0,0,1}$P_1$}%
\put(0,0){\includegraphics[width=\unitlength,page=2]{\detokenize{ind_saddle_point}}}%
\put(0.87230344,0.85){\color[rgb]{1,0,0}$P_2$}%
\put(0,0){\includegraphics[width=\unitlength,page=3]{\detokenize{ind_saddle_point}}}%
\put(-0.05,0.91){\makebox(0,0)[rt]{(a)}}%
\end{picture}%
\end{minipage} \hspace{1cm}
\begin{minipage}[t]{9cm}
\vspace{0pt}\small
\textbf{A saddle fixed point (orientation preserving)} \\
$P_1$ is the blue rectangle
together with its two sides $P_2$ \\
$H_*(P_1, P_2) \cong (0, \bZ)$ \\
index map at level $1$: $g_1 \mapsto g_1$ \\
eigenvalues: $\{1\}$ at level $1$ \\
example map: $f(x,y)=(2x,y/2)$ \\
code: \texttt{H=(0,Z,0) E=(1)}
\end{minipage}
\vskip 10pt
\begin{minipage}[t]{3cm}
\vspace{0pt}
\setlength{\unitlength}{3cm}%
\begin{picture}(1,1.08564221)%
\put(0,0){\includegraphics[width=\unitlength,page=1]{\detokenize{ind_saddle_point_flip}}}%
\put(0.20,0.92362924){\color[rgb]{0,0,0}$f$}%
\put(0,0){\includegraphics[width=\unitlength,page=2]{\detokenize{ind_saddle_point_flip}}}%
\put(0.70,0.03509055){\color[rgb]{0,0,0}$f$}%
\put(-0.05,1.07){\makebox(0,0)[rt]{(b)}}%
\end{picture}%
\end{minipage} \hspace{1cm}
\begin{minipage}[t]{9cm}
\vspace{0pt}\small
\textbf{A saddle fixed point (orientation reversing)} \\
$P_1$ is the violet rectangle with its two sides $P_2$ \\
$H_*(P_1, P_2) \cong (0, \bZ)$ \\
index map at level $1$: $g_1 \mapsto -g_1$ \\
eigenvalues: $\{-1\}$ at level $1$ \\
example map: $f(x,y)=(-2x,y/2)$ \\
code: \texttt{H=(0,Z,0) E=(-1)}
\end{minipage}
\vskip 10pt
\begin{minipage}[t]{3cm}
\vspace{0pt}
\setlength{\unitlength}{2.5cm}%
\begin{picture}(1,0.96318676)%
\put(0,0){\includegraphics[width=\unitlength,page=1]{\detokenize{ind_repeller_point}}}%
\put(0.86899565,0.63150243){\color[rgb]{0,0,1}$P_1$}%
\put(0.05678483,0.15970996){\color[rgb]{1,0,0}$P_2$}%
\put(0,0){\includegraphics[width=\unitlength,page=2]{\detokenize{ind_repeller_point}}}%
\put(-0.05,0.93){\makebox(0,0)[rt]{(c)}}%
\end{picture}%
\end{minipage} \hspace{1cm}
\begin{minipage}[t]{9cm}
\vspace{0pt}\small
\textbf{A repelling fixed point (orientation preserving)} \\
$P_1$ is the blue disk
together with its ``boundary'' $P_2$ \\
$H_*(P_1, P_2) \cong (0, 0, \bZ)$ \\
index map at level $2$: $g_1 \mapsto g_1$ \\
eigenvalues: $\{1\}$ at level $2$ \\
example map: $f(x,y)=(2x,2y)$ \\
code: \texttt{H=(0,0,Z) E=(1)}
\end{minipage}
\vskip 10pt
\begin{minipage}[t]{3cm}
\vspace{0pt}
\setlength{\unitlength}{2.5cm}%
\begin{picture}(1,0.9635575)%
\put(0,0){\includegraphics[width=\unitlength,page=1]{\detokenize{ind_repeller_circle}}}%
\put(0.79922484,0.75694461){\color[rgb]{0,0,1}\makebox(0,0)[lb]{$P_1$}}%
\put(0,0){\includegraphics[width=\unitlength,page=2]{\detokenize{ind_repeller_circle}}}%
\put(0.78701236,0.06837517){\color[rgb]{1,0,0}$P_2$}%
\put(-0.05,0.95){\makebox(0,0)[rt]{(d)}}%
\end{picture}%
\end{minipage} \hspace{1cm}
\begin{minipage}[t]{9cm}
\vspace{0pt}\small
\textbf{A repelling invariant circle} \\
$P_1$ is the blue ring
together with $P_2$ shown in red \\
$H_*(P_1, P_2) \cong (0, \bZ, \bZ)$ \\
index map at level $1$: $g_1 \mapsto g_1$;
at level $2$: $g_1 \mapsto g_1$ \\
eigenvalues: $\{1\}$ at level $1$, $\{1\}$ at level $2$ \\
code: \texttt{H=(0,Z,Z) E=(1;1)}
\end{minipage}
\vskip 10pt
\caption{\label{fig:indUnstable}
Typical Conley indices for unstable isolated invariant sets that appear in actual applied dynamical systems. The examples (a), (c), (d) may come from a time-$t$ map for a flow (the index map is thus the identity). The map in (b) flips the index pair horizontally and squeezes it vertically.}
\end{figure}

\begin{figure}[htbp]
\begin{minipage}[t]{3cm}
\vspace{0pt}
\setlength{\unitlength}{3cm}%
\begin{picture}(1,1.03732889)%
\put(0,0){\includegraphics[width=\unitlength,page=1]{\detokenize{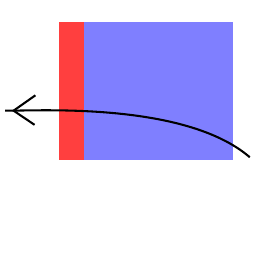}}}%
\put(0.47,0.07){\color[rgb]{0,0,1}\makebox(0,0)[lb]{$P_1$}}%
\put(0,0){\includegraphics[width=\unitlength,page=2]{\detokenize{ind_trivial_side.pdf}}}%
\put(0.05,0.87){\color[rgb]{1,0,0}$P_2$}%
\put(0,0){\includegraphics[width=\unitlength,page=3]{\detokenize{ind_trivial_side.pdf}}}%
\put(-0.07,1){\makebox(0,0)[rt]{(a)}}%
\end{picture}%
\end{minipage} \hspace{1cm}
\begin{minipage}[t]{9cm}
\vspace{0pt}\small
\textbf{A box with trajectories passing through} \\
$P_1$ is the blue rectangle together with its side $P_2$ \\
$H_*(P_1, P_2) \cong 0$ \\
index map: $0$ \\
eigenvalues: $\{0\}$ \\
example map: $f(x,y)=(x-1,y)$ \\
code: \texttt{H=(0) E=(0)}
\end{minipage}
\vskip 10pt
\begin{minipage}[t]{3cm}
\vspace{0pt}
\setlength{\unitlength}{3cm}%
\begin{picture}(1,0.96318676)%
\put(0,0){\includegraphics[width=\unitlength,page=1]{\detokenize{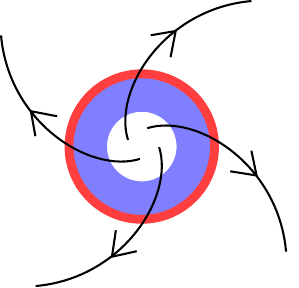}}}%
\put(0.86899565,0.63150243){\color[rgb]{0,0,1}$P_1$}%
\put(0.11,0.22){\color[rgb]{1,0,0}$P_2$}%
\put(0,0){\includegraphics[width=\unitlength,page=2]{\detokenize{ind_trivial_circle.pdf}}}%
\put(-0.07,1){\makebox(0,0)[rt]{(b)}}%
\end{picture}%
\end{minipage} \hspace{1cm}
\begin{minipage}[t]{9cm}
\vspace{0pt}\small
\textbf{A ring with trajectories passing through} \\
$P_1$ is the blue ring
together with its outer belt $P_2$ \\
$H_*(P_1, P_2) \cong 0$ \\
index map: $0$ \\
eigenvalues: $\{0\}$ \\
example map: $f(x,y)=(2x,2y)$ \\
code: \texttt{H=(0) E=(0)}
\end{minipage}
\vskip 10pt
\caption{\label{fig:indTrivial}
Sample index pairs with the trivial Conley index. Note that the invariant part of $P_1 \setminus P_2$ is empty in both cases.}
\end{figure}

Specifically, in order to understand the dynamics in each numerical Morse set $\cN_i$ constructed by the method introduced in Section~\ref{sec:glob}, we check its stability by computing its forward image by $\cF$ and analyzing the part that ``sticks out:'' $\cF(\cN_i) \setminus \cN_i$.
We say that $\cN_i$ is {\em attracting}
if $\cF (\cN_i) \subset \cN_i$; \label{infact} in fact, one can prove that then $|\cN_i|$ contains a non-empty local attractor (cf.\ Lemma~2 in~\cite{Milnor}), 
which justifies this term. 
If $\cF(\cN_i) \not \subset \cN_i$ then
we say that \label{ni} $\cN_i$ is {\em unstable}.
We qualify the kind of instability by computing the Conley index using the approach introduced in \cite{Arai,MMP2005,PS2008}.

The definition of the Conley index is based on the notion of an \emph{index pair}. This is a pair of sets $(P_1,P_2)$ such that $P_1$ covers an isolating neighborhood, and trajectories exit this neighborhood through $P_2$; see e.g.~\cite{Arai} for the precise definition. A few typical Conley indices that \label{appear} appear in our computations are shown in Figures \ref{fig:indStable} and~\ref{fig:indUnstable}.
In the case of a flow, the homological Conley index is merely the relative homology of the index pair.
However, in the case of a map, one also needs to consider the homomorhpism induced in homology by the map on the index pair (denoted here by $H_*(I_P)$), with some reduction applied to it; see e.g. \cite{Szymczak95} for the details. In order to simplify the representation of the Conley index for a map, we compute the non-zero eigenvalues of the index map, which is a weaker but easily computable invariant; see \cite{Arai} for more explanations on this approach.

A selection of typical Conley indices is provided in Figures \ref{fig:indStable} and~\ref{fig:indUnstable}, and two examples of the trivial Conley index are shown in Figure~\ref{fig:indTrivial}, together with the codes that we use in Figures \ref{fig:n12graphs} and~\ref{fig:n12bif}.
In particular, it is important to note that this index has a specific form for a hyperbolic fixed point or a hyperbolic periodic orbit with a $d$-dimensional unstable manifold.
If we encounter one of these specific forms of the index then we say that $\cN_i$ {\em is of type}
of the corresponding periodic point or orbit.
Although in such a case $|\cN_i|$ indeed contains a periodic orbit of the expected period, it may turn out that the stability of that orbit is different, and the dynamics inside the numerical Morse set might be more complicated than it appears from the outside.
In particular, if $|\cN_i| \subset \bR^n$ and $\cN_i$ is of type of a fixed point or a periodic orbit in $\bR^n$
with $n$-dimensional unstable manifold
then we say that $\cN_i$ is {\em repelling}.
It is a crucial fact that if the Conley index of $\cN_i$ is nontrivial then $\Inv|\cN_i| \neq \emptyset$.


\FloatBarrier
\subsection{Application of the automatic analysis method to the Chialvo model of a neuron}
\label{sec:appl}
\vskip 5mm

By applying the methods introduced in Sections \ref{sec:glob} and~\ref{sec:indiv} to the Chialvo model of a neuron, explained in Section~\ref{sec:model}, we obtained the continuation diagram shown in Figure~\ref{fig:n12cont}.
The computations were restricted to the phase space $(x,y) \in B := [-0.1,9] \times [-5,3]$ and the parameter set $(b,k) \in \Lambda_2 := [0,1] \times [0.015,0.030]$ with $a := 0.89$ and $c := 0.28$ fixed.
The $1024 \times 1024$ uniform rectangular grid was applied in $B$, and $\Lambda_2$ ws split into $200 \times 75$ rectangles of equal size.
The technical details and justification of these choices are gathered in Appendix~\ref{app:continuation}. Here we only briefly mention that these sets were chosen on the basis of the information contained in~\cite{Chialvo} and our preliminary computations, including application of the topological-numerical analysis with a larger set of parameters $(b,k) \in \Lambda_1 := [0,1] \times [0,0.2]$. The results of the latter computations are shown in Figure~\ref{fig:n09cont} in Appendix~\ref{app:continuation} and are also available on-line at~\cite{www}.

\begin{figure}[htbp]
\centerline{\includegraphics[width=0.8\textwidth]{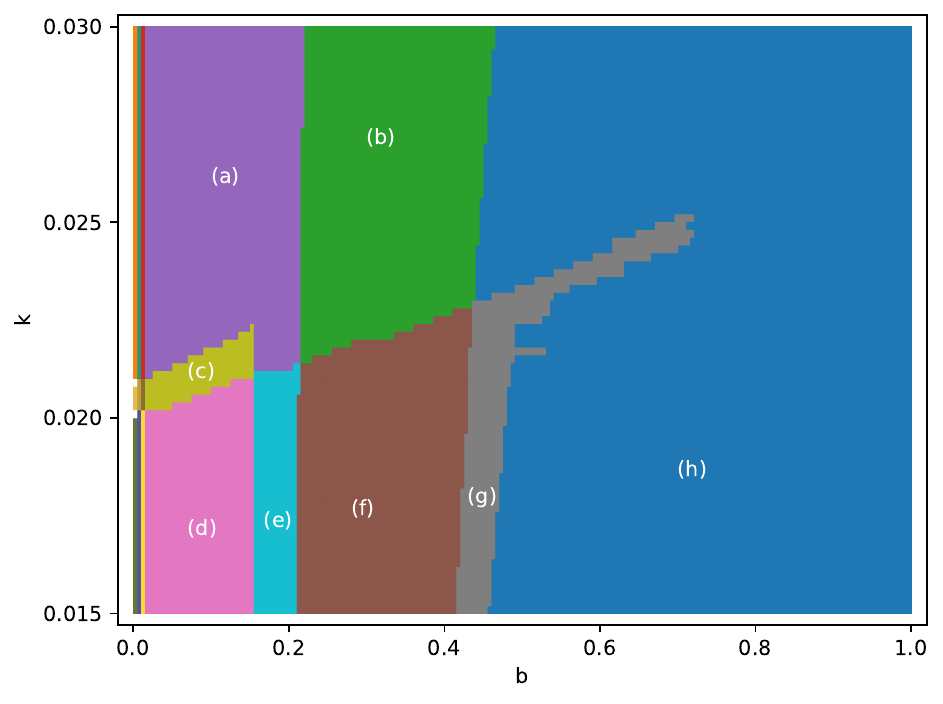}}
\caption{\label{fig:n12cont}%
Continuation diagram for the Chialvo model with $a = 0.89$, $c = 0.28$, and $(b,k) \in \Lambda_2 = [0,1] \times [0.015,0.030]$ split into the $200 \times 75$ uniform rectangular grid. See also Figure~\ref{fig:n09cont} in Appendix~\ref{app:continuation} for a corresponding diagram with $(b,k) \in \Lambda_1 = [0,1] \times [0,0.2] \supset \Lambda_2$, in which one can see that in fact regions (a) and (h) are related by continuation.}
\end{figure}

\label{rev:contExplain}
The continuation diagram in Figure~\ref{fig:n12cont} shows the set of parameters $(b,k) \in \Lambda_2 = [0,1] \times [0.015,0.030]$ split into $200 \times 75$ rectangular boxes of the same size. Each box is thus a subset of parameters; for example, the leftmost bottom box corresponds to $(b,k) \in [0,0.005] \times [0.015,0.0152]$. Adjacent boxes that are shown in the same color belong to the same continuation class (rigorously validated, as explained in Section~\ref{sec:glob}). This means that the dynamics for all the parameters in a common contiguous color area in the diagram is the same from the qualitative point of view, as perceived at the given resolution $1024 \times 1024$ in the phase space. In particular, the number of recurrent components (numerical Morse sets) found for all the parameters in that area, as well as their stability type (measured by the Conley index) are the same.

The continuation diagrams shown in Figures~\ref{fig:n12cont} and~\ref{fig:n09cont} (the latter in Appendix~\ref{app:continuation}) are available in~\cite{www} for interactive browsing. Clicking a point in the continuation diagram launches a page with the phase space portrait of the numerical Morse decomposition computed for the specific rectangle of parameters, as well as a visualization of the corresponding Conley-Morse graph. The details shown in the visualization are briefly explained in Figures \ref{fig:mdec12demo} and~\ref{fig:mgraph12demo}.

\begin{figure}[htbp]
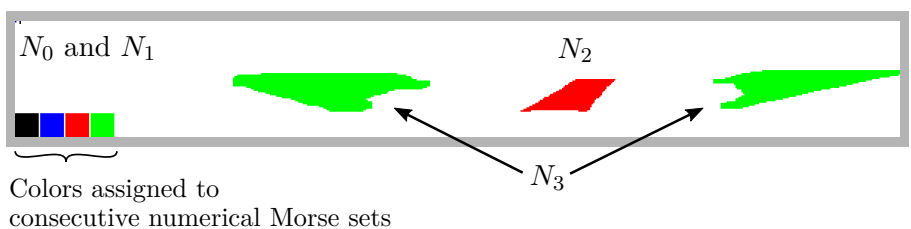

\setlength{\unitlength}{12cm}%
\begin{picture}(1,0.24794625)%
\small
\put(0,0){\includegraphics[width=\unitlength,page=1]{\detokenize{mdec12p2_14explain}}}%
\put(0.01392368,0.19826713){$N_0$ and $N_1$}%
\put(0,0){\includegraphics[width=\unitlength,page=2]{\detokenize{mdec12p2_14explain}}}%
\put(0.6108589,0.19606656){$N_2$}%
\put(0.58,0.0550039){$N_3$}%
\put(0,0){\includegraphics[width=\unitlength,page=3]{\detokenize{mdec12p2_14explain}}}%
\footnotesize
\put(0.00322654,0.04166339){Colors assigned to}%
\put(0.00322654,0.01){consecutive numerical Morse sets}%
\end{picture}%
\caption{\label{fig:mdec12demo}%
Sample phase space portrait computed for the Chialvo model, as shown in the interactive visualization available at~\cite{www},
with the gray bounding box added for clarity.
The color boxes in the lower left corner indicate the color coding of consecutive numerical Morse sets.
The numbering starts with $0$.
Note the barely visible tiny sets $N_0$ (black) and $N_1$ (blue), both located in the left top corner of the figure.}
\end{figure}

\begin{figure}[htbp]
\setlength{\unitlength}{6cm}%
\begin{picture}(2,1)%
\put(0,0){\includegraphics[height=\unitlength]{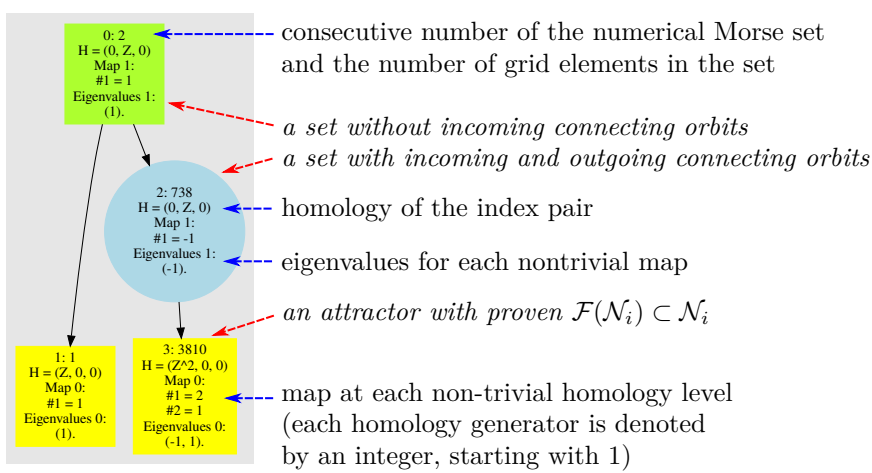}}%
\footnotesize
\put(0.61,0.94){consecutive number of the numerical Morse set}%
\put(0.61,0.87){and the number of grid elements in the set}%
\put(0.61,0.73){\textit{a set without incoming connecting orbits}}%
\put(0.61,0.66){\textit{a set with incoming and outgoing connecting orbits}}%
\put(0.61,0.55){homology of the index pair}%
\put(0.61,0.43){eigenvalues for each nontrivial map}%
\put(0.61,0.32){\textit{an attractor with proven $\cF(\cN_i) \subset \cN_i$}}%
\put(0.61,0.14){map at each non-trivial homology level}%
\put(0.61,0.07){(each homology generator is denoted}%
\put(0.61,0){by an integer, starting with 1)}%
\end{picture}%
\caption{\label{fig:mgraph12demo}%
Sample Conley-Morse graph computed for the Chialvo model, as shown in the interactive visualization available at~\cite{www},
with the gray background added for clarity.
The information in the boxes and ovals corresponds to the information shown in Figures \ref{fig:indStable}--\ref{fig:indTrivial}; in particular, one can determine the stability type of each numerical Morse set by comparison with those examples.
Stable (attracting) numerical Morse sets are indicated by yellow rectangles (it has been proved that $\cF(\cN_i) \subset \cN_i$).
Numerical Morse sets that have no trajectories coming from other Morse sets in the decomposition are indicated
by green rectangles.
Pass-through sets (with orbits coming in from other numerical Morse sets and orbits leaving towards other numerical Morse sets) are indicated by ovals.}
\end{figure}


\FloatBarrier
\subsection{Types of dynamics and bifurcations found in the system}
\label{sec:bifurcations}

A comprehensive overview of the types of dynamics that were found in the model can be seen in Figure~\ref{fig:n12graphs}. For each continuation class, a simplified Conley-Morse graph is shown. An overview of bifurcations that were detected in the system, on the other hand, is better visible in Figure~\ref{fig:n12bif}, where the Conley-Morse graphs were joined by edges whenever the corresponding parameter regions were adjacent (this adjacency can be seen in Figures \ref{fig:n12cont} and~\ref{fig:n12graphs}).
Let us now discuss the Morse decompositions found in the different continuation classes shown in Figure~\ref{fig:n12cont}, and also the bifurcations that were observed.

\begin{figure}[htbp]
\centerline{\includegraphics[width=0.7\textwidth]{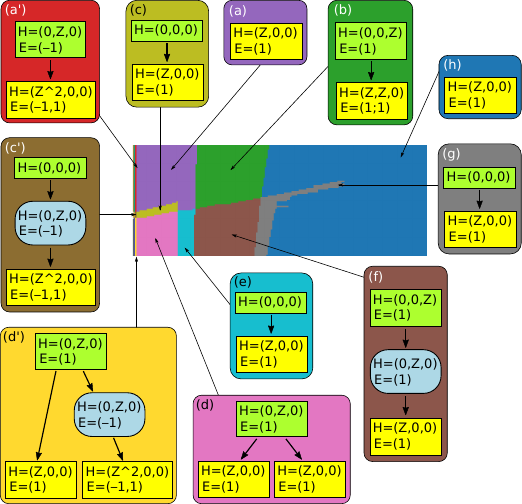}}
\caption{\label{fig:n12graphs}%
Conley-Morse graphs for the continuation classes shown in Figure~\ref{fig:n12cont} for the Chialvo model with $a = 0.89$, $c = 0.28$, and $(b,k) \in \Lambda_2 = [0,1] \times [0.015,0.030]$. The background color of each frame is the same as that of the corresponding continuation class in Figure~\ref{fig:n12cont}.}
\end{figure}

\begin{figure}[htbp]
\centerline{\includegraphics[width=\textwidth]{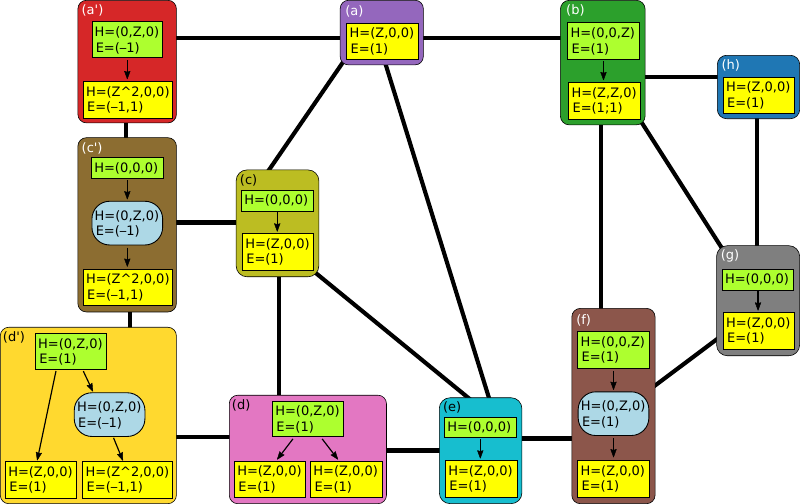}}
\caption{\label{fig:n12bif}%
Continuation graph between the continuation classes shown with the corresponding Conley-Morse graphs for the Chialvo model with $a = 0.89$, $c = 0.28$, and $(b,k) \in \Lambda_2 = [0,1] \times [0.015,0.030]$. The background color of each frame is the same as that of the corresponding continuation class in Figure~\ref{fig:n12cont}. Connections corresponding to classes intersecting by a single vertex, such as between (a) and (f), are neither shown here nor discussed in the text for the sake of clarity.}
\end{figure}

In Region~(a), there is exactly one attracting neighborhood, so the detected dynamics is very simple. However, the constructed numerical Morse set is of different size and shape, depending on the specific part of the region: it is small for lower values of $b$, and suddenly increases in size above Region~(e), as shown in Figure~\ref{fig:mdec12a}.

\begin{figure}[htbp]
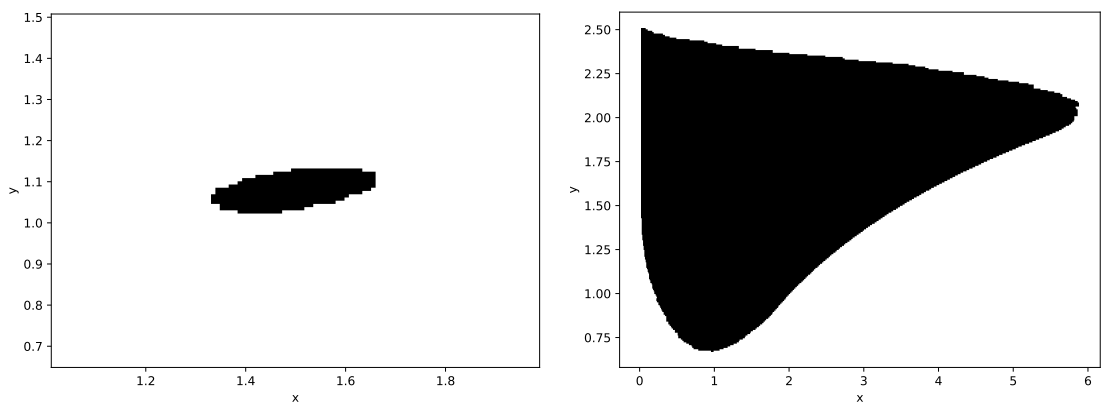

\centerline{\includegraphics[width=0.48\textwidth]{\detokenize{mdec12p21_56}}
\includegraphics[width=0.48\textwidth]{\detokenize{mdec12p36_56}}}
\caption{\label{fig:mdec12a}%
Morse decompositions for selected parameter boxes in Region~(a) shown in Figure~\ref{fig:n12cont}. The parameter boxes have integer coordinates $(21,56)$ and $(36,56)$, and correspond to $(b,k) \in [0.105,0.110] \times [0.0262,0.0264]$, and $(b,k) \in [0.180,0.185] \times [0.0262,0.0264]$, respectively. There is exactly one attracting numerical Morse set in each case. Note the different scale in both plots.}
\end{figure}

When the parameter $b$ is increased to move from Region~(a) to Region~(b), the internal structure of the large isolating neighborhood is revealed, and in Region~(b) one can see it split into an attractor--repeller pair: a small repeller ($308$ boxes) surrounded by a circle-shaped attractor (almost $31{,}000$ boxes); see Figure~\ref{fig:mdec12b}. The Morse graph shows the Conley indices computed for the numerical Morse sets. The exit set of the small set surrounds it: the relative homology is like for the pointed sphere, with the identity index map. The exit set of the large set is empty, and the index map shows that the orientation is preserved. This situation corresponds to what is shown in Figure~\ref{fig:indUnstable}~(c).

\begin{figure}[htbp]
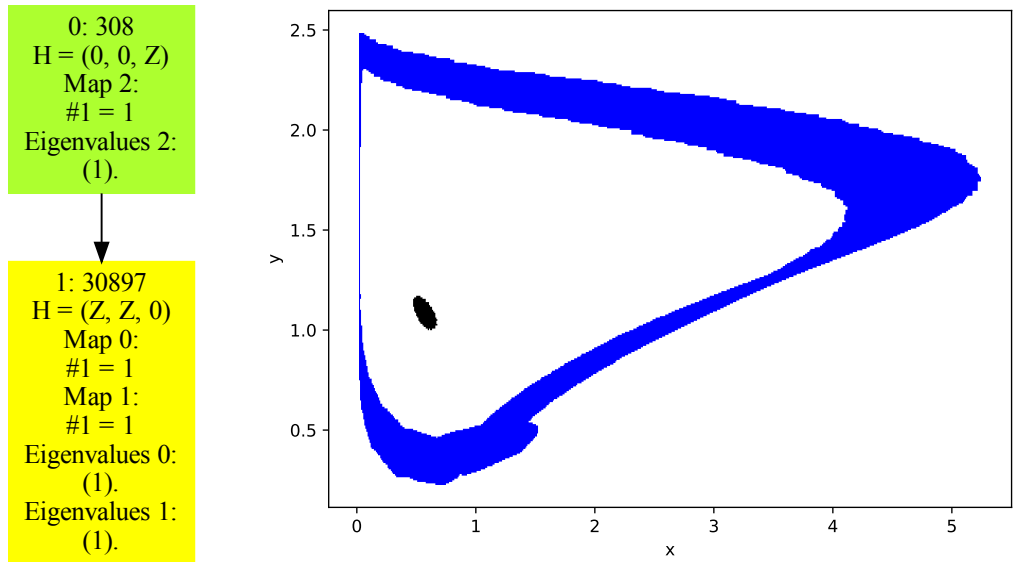

\centerline{\includegraphics[height=0.5\textwidth]{\detokenize{mgraph12p56_56}}
\hspace{12pt}
\includegraphics[height=0.5\textwidth]{\detokenize{mdec12p56_56}}}
\caption{\label{fig:mdec12b}%
The Morse graph (left) and the Morse decomposition (right) computed for a sample parameter box taken from Region~(b) shown in Figure~\ref{fig:n12cont}. The parameter box with integer coordinates $(56,56)$ was chosen, which corresponds to $(b,k) \in [0.280,0.285] \times [0.0262,0.0264]$. There is an attractor in the shape of a circle, and a small repeller inside.}
\end{figure}

On the other hand, when we move from Region~(a) through Region~(c) down to Region~(d) by decreasing the parameter $k$, we observe a numerical version of the saddle-node bifurcation. A new numerical Morse set appears in Region~(c) with trivial index, which then splits into two Morse sets: one with one unstable direction (a saddle) and one attractor. These features can be derived from the Conley index; see Figure~\ref{fig:mdec12d} and compare the indices to the ones shown in Figures \ref{fig:indUnstable}~(a) and~\ref{fig:indStable}~(a).

\begin{figure}[htbp]
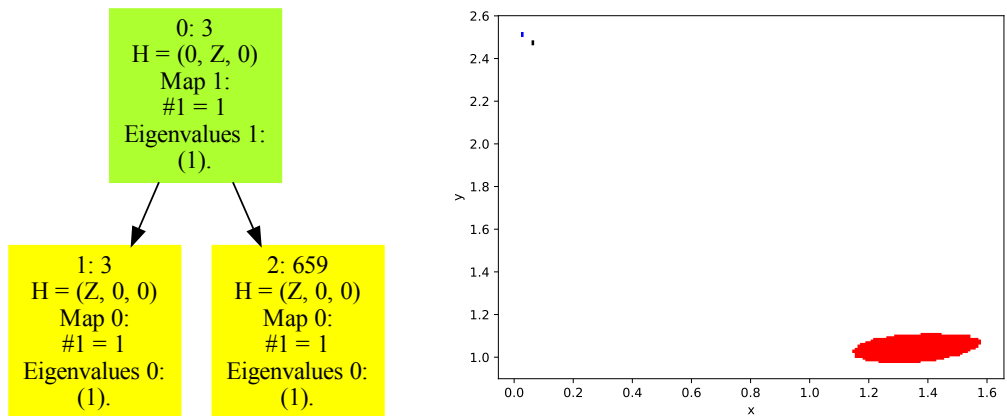

\centerline{\includegraphics[height=0.37\textwidth]{\detokenize{mgraph12p24_16}}
\hspace{12pt}
\includegraphics[height=0.37\textwidth]{\detokenize{mdec12p24_16}}}
\caption{\label{fig:mdec12d}%
The Morse graph (left) and the Morse decomposition (right) computed for a sample parameter box taken from Region~(d) shown in Figure~\ref{fig:n12cont}. The parameter box with integer coordinates $(24,16)$ was chosen, which corresponds to $(b,k) \in [0.120,0.125] \times [0.0182,0.0184]$. There is a small attractor--repeller pair and another attractor; this is a case of bi-stability.}
\end{figure}

\begin{figure}[htbp]
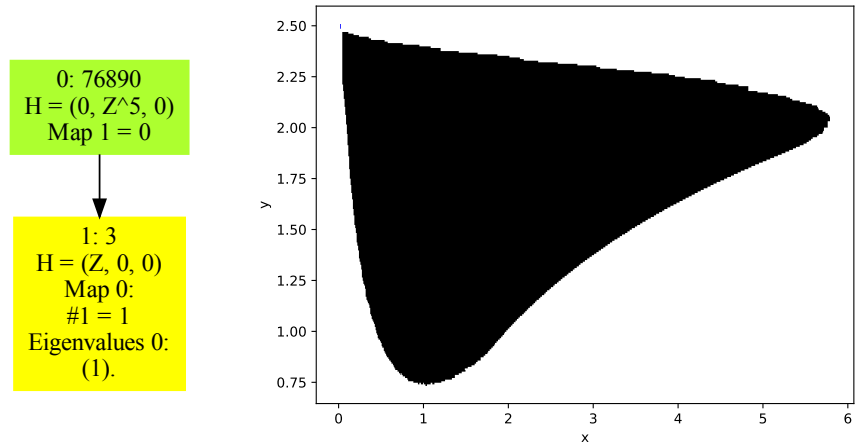

\centerline{\raisebox{0.05\textwidth}{\includegraphics[height=0.3\textwidth]{\detokenize{mgraph12p35_23}}}
\hspace{12pt}
\includegraphics[height=0.4\textwidth]{\detokenize{mdec12p35_23}}}
\caption{\label{fig:mdec12e}%
The Morse graph (left) and the Morse decomposition (right) computed for a sample parameter box taken from Region~(e) shown in Figure~\ref{fig:n12cont}. The parameter box with integer coordinates $(35,23)$ was chosen, which corresponds to $(b,k) \in [0.175,0.180] \times [0.0196,0.0198]$. The Conley index of the large numerical Morse set is trivial. The tiny attractor is barely visible at the top left corner of the figure.}
\end{figure}

\begin{figure}[htbp]
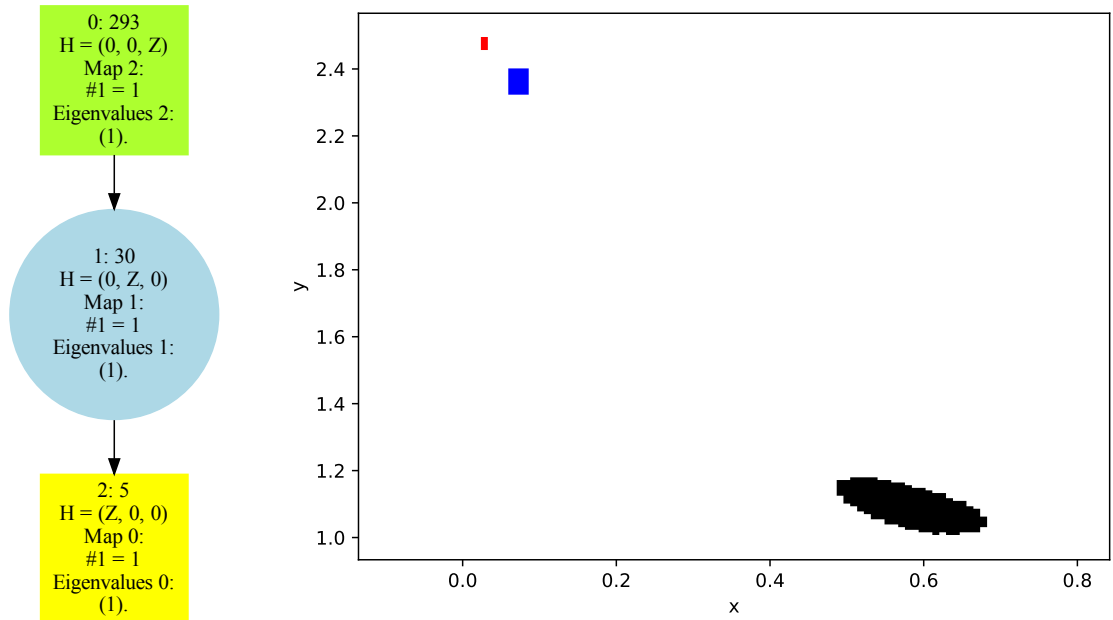

\centerline{\includegraphics[height=0.55\textwidth]{\detokenize{mgraph12p55_23}}
\hspace{12pt}
\includegraphics[height=0.55\textwidth]{\detokenize{mdec12p55_23}}}
\caption{\label{fig:mdec12f}%
The Morse graph (left) and the Morse decomposition (right) computed for a sample parameter box taken from Region~(f) shown in Figure~\ref{fig:n12cont}. The parameter box with integer coordinates $(55,23)$ was chosen, which corresponds to $(b,k) \in [0.275,0.280] \times [0.0196,0.0198]$. There is a repeller (shown in black), a saddle (shown in blue) and a small attractor (shown in red).}
\end{figure}

When the parameter $b$ is increased to move from Region~(d) to Region~(e), the newly created saddle joins the large attractor, and a large numerical Morse set appears; see Figure~\ref{fig:mdec12e}. The Conley index of this large numerical Morse set, however, is trivial, which suggests that it might contain no non-empty invariant set. Its existence is most likely due to the dynamics slowing down in preparation for another bifurcation. Such a bifurcation indeed appears if we increase $b$ further to enter Region~(f). The large numerical Morse set splits into a saddle and a repeller, which is another version of the saddle-node bifurcation; see Figure~\ref{fig:mdec12f} and compare the indices to the ones shown in Figures \ref{fig:indUnstable}~(a) and~\ref{fig:indUnstable}~(c). Increasing the parameter $b$ further makes these two sets collapse in Region~(g) and disappear in Region~(h).

An interesting and somewhat unusual bifurcation occurs when one decreases the parameter $k$ to move from Region~(b) to Region~(f). The circle-shaped attractor observed in Region~(b) splits into a node-type attractor and a saddle, while the node-type repeller inside persists. Apparently, this might be a saddle--node bifurcation. Right after the transition, a small neighborhood of the attractor appears close to the large circular isolating neighborhood of the saddle, and the latter one suddenly shrinks with further decrease in $b$; see Figure~\ref{fig:mdec12bf}.

\begin{figure}[htbp]
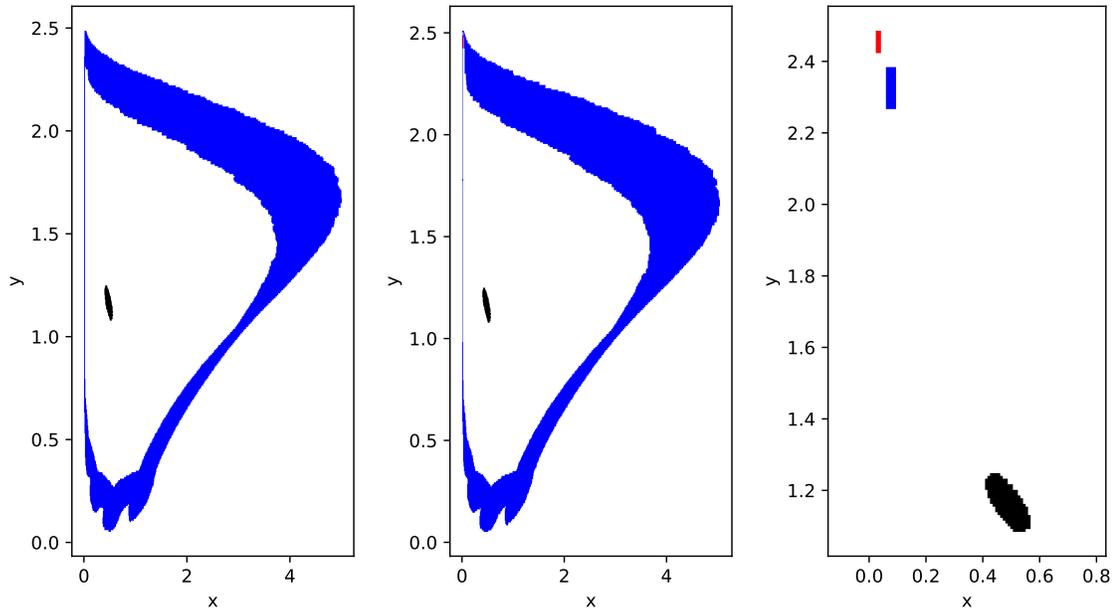

\centerline{\includegraphics[height=0.55\textwidth]{\detokenize{mdec12p62_35}}
\hspace{-6pt}
\includegraphics[height=0.55\textwidth]{\detokenize{mdec12p62_30}}
\hspace{-6pt}
\includegraphics[height=0.55\textwidth]{\detokenize{mdec12p62_29}}}
\caption{\label{fig:mdec12bf}%
Numerical Morse decompositions in transition from Region~(b) to Region~(f) shown in Figure~\ref{fig:n12cont}. The leftmost plot corresponds to the parameter box with integer coordinates $(62,35)$, locatd in Region~(b), the other two -- parameter boxes $(62,30)$ and $(62,29)$, respectively, both located in Region~(f). The actual parameters are: $b \in [0.310,0.315]$ in all the three cases, and $k \in [0.0208,0.0210]$, $k \in [0.0220,0.0212]$, or $k \in [0.0210,0.0212]$, respectively. In the middle plot, the attractor shown in red is barely visible at the top left corner, surrounded by the blue set. Note that the blue set is topologically a circle, but its leftmost edge is very thin and might not be well visible in the plots.}
\end{figure}

There are also a few additional regions in the continuation diagram for very small values of the parameter $b$ that can be seen in Figure~\ref{fig:n12cont} and can be investigated with the interactive continuation diagram available at~\cite{www}. When decreasing $b$ from Regions (a), (c) and (d) to Regions (a$'$), (c$'$) and (d$'$), respectively, that is to $b \in [0.010,0.015]$, an attracting isolating neighborhood splits into a period-two attracting orbit and a saddle in the middle, with the map reversing the orientation, like in a typical period-doubling bifurcation; compare the indices to the ones shown in Figures \ref{fig:indStable}~(b) and~\ref{fig:indUnstable}~(b). When $b$ is decreased even further, at $b \in [0.005,0.010]$, the period-doubling bifurcation is undone, and the two numerical Morse sets again become one. For the lowest values of $b$, that is, $b \in [0,0.005]$, an additional numerical Morse set appears that looks like a layer on top of the attracting numerical Morse set. Its Conley index is trivial, and thus its appearance is most likely due to slow-down in the dynamics; see Figure~\ref{fig:mdec12smallb}.

\begin{figure}[htbp]
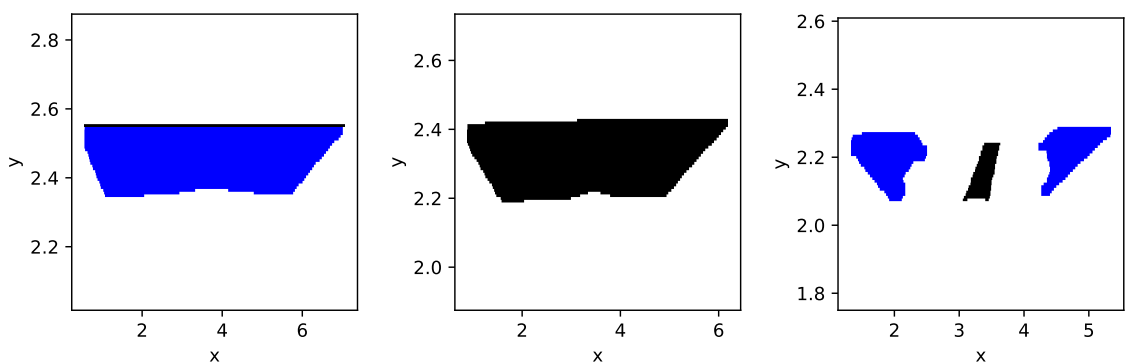

\centerline{\includegraphics[width=0.33\textwidth]{\detokenize{mdec12p0_51}}
\hspace{-6pt}
\includegraphics[width=0.33\textwidth]{\detokenize{mdec12p1_51}}
\hspace{-6pt}
\includegraphics[width=0.33\textwidth]{\detokenize{mdec12p2_51}}}
\caption{\label{fig:mdec12smallb}%
Numerical Morse decompositions for very small values of $b$ shown in Figure~\ref{fig:n12cont} as vertical stripes along the left-hand side edge of the diagram. The parameter boxes chosen for the plots are (from left to right): $(0,51)$, $(1,51)$, and $(2,51)$. The actual parameters are: $k \in [0.0252,0.0254]$ in all the three cases, and $b \in [0,0.005]$, $b \in [0.005,0.01]$, or $b \in [0.01,0.015]$, respectively. The thin numerical Morse set (drawn in black) in the leftmost plot has trivial Conley index. The isolating neighborhood in the middle plot is an attractor. The Morse decomposition in the rightmost plot looks like a period-two attracting orbit with a saddle in the middle.}
\end{figure}

We would like to point out the fact that our discussion of the dynamics and bifurcations was only based on isolating neighborhoods and their Conley indices. The actual dynamics might be much more subtle and complicated, and therefore, any statements about possible hyperbolic fixed points are merely speculations. Moreover, the actual bifurcations may take place for some nearby parameters, at locations that are somewhat shifted from the lines shown in Figures \ref{fig:n09cont} and~\ref{fig:n12cont}. Nevertheless, if the isolating neighborhoods are small then, from the point of view of applications in which the accuracy is limited and there is some noise or other disturbances, the numerical results shed light onto the global dynamics and our discussion explains it in terms of simple models that can be built with hyperbolic fixed points. This is not true, however, in the cases in which the computations yield large isolating neighborhoods. Indeed, the computational method introduced in \cite{Arai} does not provide any means for understanding the dynamics inside such sets, apart from what can be deduced from the knowledge of their Conley indices. We proposed some methods for this purpose in Section~\ref{sec:indiv} and we show their application in Section~\ref{sec:recComp}.


\FloatBarrier
\subsection{Sizes of invariant sets in the Chialvo model and the spiking-bursting oscillations}
\label{sec:sizes}

The diagram in Figure~\ref{fig:sizeNeuron12} shows the total size of all the numerical Morse sets constructed for all the parameter combinations considered. This diagram complements the corresponding continuation diagram (Figure~\ref{fig:n12cont}) in providing the information about the global overview of the dynamics. Note that a corresponding diagram was also computed for the wider ranges of the two parameters, which provides a more suggestive picture; see Figure~\ref{fig:sizeNeuron09} in Appendix~\ref{sec:sizeNeuron09}.

A larger numerical Morse set allows more room for fluctuations or even appearance of complicated dynamics in a real system that is approximated by the mathematical model. In particular, in the Chialvo model, the appearance of spiking-bursting oscillations is connected with the emergence of large numerical Morse sets, especially if this phenomenon is combined with chaotic dynamics. Indeed, in neuron models, such as the Chialvo model, an  attracting equilibrium point corresponds to the resting state of the neuron, whereas tonic (sustained) spiking is connected with the existence of oscillatory solutions (which do not converge to the stable periodic fixed point). However, also the amplitude of these oscillatory solutions must be large enough since oscillatory solutions with small amplitudes would rather correspond to ``subthreshold'' oscillations than to spikes. If such an attracting oscillatory orbit is periodic then the spike-pattern fired by the neuron is (asymptotically) periodic as well. On the other hand, non-periodic oscillatory solutions (such as, for example, the blue orbit shown in Figure~\ref{fig:traj4}) lead to chaotic spiking patterns where irregular spikes with varying amplitudes are observed, often interspersed with small subthreshold oscillations. If some spikes on these orbits are separated only by short interspike intervals, followed by periods of quiescence (no spikes), then we can say that spikes are grouped into bursts, i.e., we have spiking-bursting solutions; this happens typically when the oscillatory solution winds many times around the unstable fixed point (which is the case of the blue orbit in Figure~\ref{fig:traj4}, see also Figure~10 in~\cite{Chialvo}). Therefore, the existence of spiking-bursting solutions is directly connected with the existence of large Morse sets, and our results provided in this work allow to indicate various regions in the parameter space in which one can look for such phenomena. On the other hand, the phenomenon of multistability, such as co-existence of an attracting oscillatory solution and a quiescence solution (attracting fixed point) inside the area delineated by the oscillatory orbit, also often leads to large Morse sets. In such a case a proper perturbation might cause the neuron to switch from sustained periodic firing to resting and vice versa (see also Figure~8 in~\cite{Chialvo}.)

In the Chialvo model, one can notice that very large numerical Morse sets, consisting of some $80{,}000$ grid elements or more, appear especially in two regions of the parameters: $(b,k) \approx (0.5,0.03)$ and $(b,k) \approx (0.2,0.03)$. Such parameters may make the model resistant to purely analytical investigation due to its complexity, making our methods a better fit for the purpose of understanding the dynamics. One may also speculate that in the case of larger sets the dynamics is less predictable and thus chaotic dynamics might emerge.

Although it is not entirely obvious, it may be possible that some numerical Morse decompositions fall in the same continuation class even if the sizes of the numerical Morse sets being matched differ considerably. Indeed, this happens in our case. For example, sudden change in the size of the numerical Morse sets sometimes can be found for $(b,k) \approx (0.8, 0.025)$ or $(b,k) \approx (0.17, 0.026)$. The reason in all the observed cases seems to be the emergence of cyclic behavior that corresponds to the spiking-bursting oscillations in the Chialvo model, as explained above. 

An important observation is that sets of parameters that yield very large numerical Morse sets sometimes span across adjacent continuation classes. This can be observed, for example, for $(b,k) \approx (0.22, 0.025)$, where the Conley-Morse diagram changes only because the unstable fixed point can be isolated from the large attracting isolating neighborhood. This results in a qualitative change in the perception of the dynamics at the prescribed resolution, switching from Region~(a) to Region~(b), even though the actual behavior of the vast majority of trajectories might still be similar.

\begin{figure}[htbp]
\centering
\includegraphics[width=0.9\textwidth]{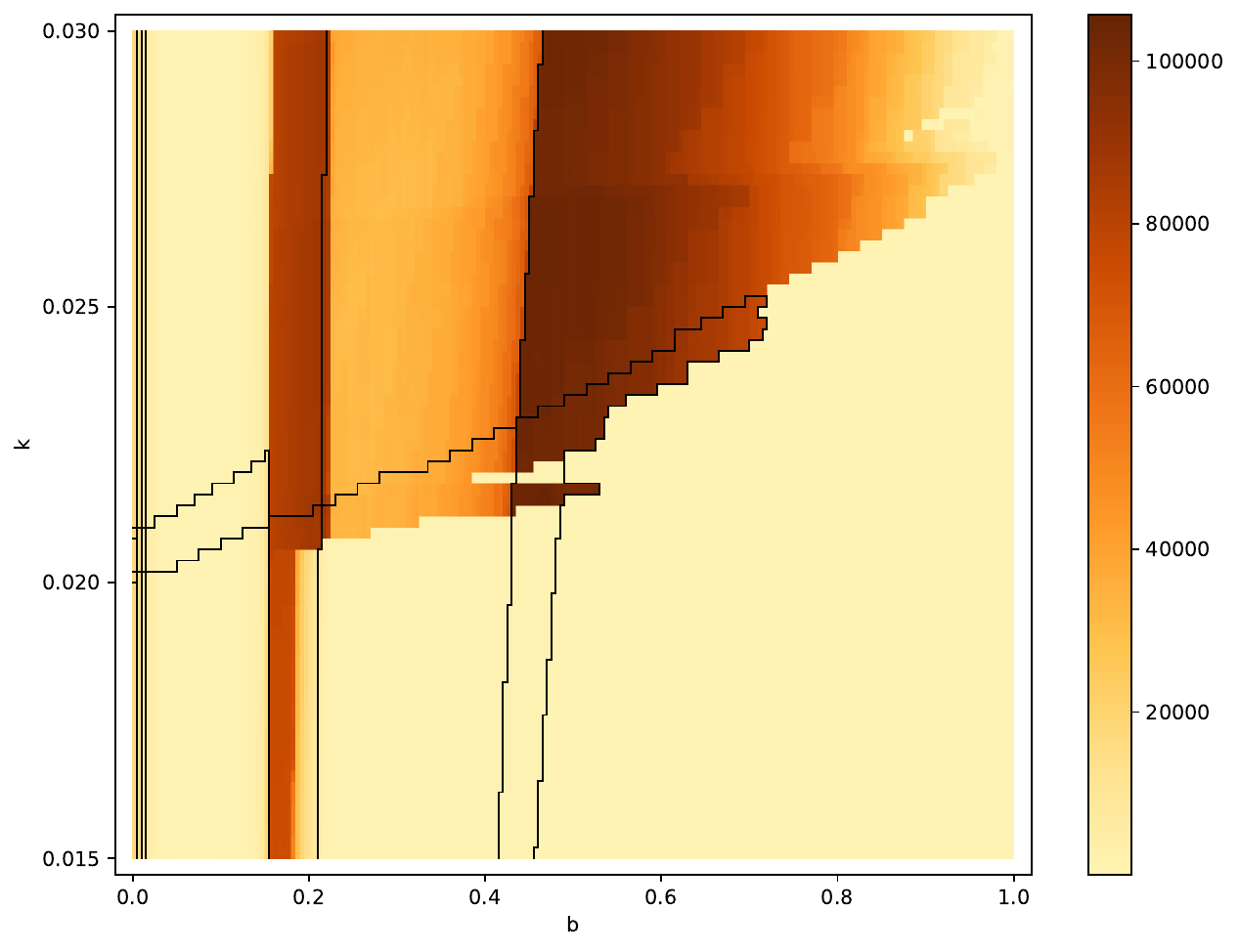}
\caption{\label{fig:sizeNeuron12}%
The size of the union of all the numerical Morse sets found in the phase space for the corresponding parameters in $\Lambda_2$ in the Chialvo model. The black lines indicate borders between different continuation classes; see Figure~\ref{fig:n12cont} for the corresponding continuation diagram.}
\end{figure}


\pagebreak

\section{Finite resolution recurrence and its variation}
\label{sec:recurrence}

While Conley index is a powerful topological tool that provides reliable information about the isolating neighborhood, it \label{sheds} does not provide extra
information about the dynamics inside of the isolating neighborhood. This may be especially disappointing if the neighborhood is large, consisting of hundreds of thousands of grid elements. Since this set corresponds to a strongly connected path component of the graph representation of the map, there exists a path in the graph from every grid element to any other element, including a path back to itself. In particular, a periodic orbit gives rise to a cycle in the graph of the same length, and thus a fixed point (stable or not) yields a cycle of length $1$. On the opposite, a path in the graph corresponds to a pseudo-orbit for the underlying map: after each iteration we may need to switch to another point within the image of the grid element in order to follow a path in the graph by means of pieces of orbits.

We begin by defining the notion of Finite Resolution Recurrence in Section~\ref{sec:frr}, and we show the results of its computation on three dynamical systems of different nature.
Then in Section~\ref{sec:recComp}, we show and discuss the results of computation of Finite Resolution Recurrence (FRR) for some large numerical Morse sets computed for the Chialvo model.
Since we notice that it is the variation of FRR that is crucial in distinguishing between different types of dynamics, we introduce the notion of Finite Resolution Recurrence Variation (FRRV) in Section~\ref{sec:FinVar}, and also its normalized version (NFRRV) in Section~\ref{sec:NormVar}. Finally, in Section~\ref{sec:varComp}, we discuss the computation of both quantities for the six dynamical systems discussed in Sections \ref{sec:frr} and~\ref{sec:recComp}. We also provide a diagram (Figure~\ref{fig:varNeuron}) that shows the result of the computation of NFRRV for a large range of parameters in the Chialvo model.


\FloatBarrier
\subsection{Finite resolution recurrence}
\label{sec:frr}

Recurrence is one of fundamental properties of many dynamical systems. In order to get insight into the recurrent dynamics inside each numerical Morse set, we introduce the notion of Finite Resolution Recurrence (FRR for short) and an algorithm for its analysis. This is a new method for the analysis of dynamics in a Morse set by means of the distribution of minimum return times. For alternative approaches to measure the recurrence in dynamical systems see, for example, \cite{Rplots}.

In what follows, we define a finite resolution version of the notion of recurrence, we explain the rigorous numerical information that it provides about trajectories, and we show the usefulness of this notion on three examples.

\begin{definition}
Let $\cF \colon \cN \multimap \cN$ be a multivalued map on a set $\cN \subset \cG(B)$, and let $Q \in \cN$. The \emph{recurrence time} of $Q$ in $\cN$ with respect to $\cF$ is defined as follows:
$\rec (Q) := \min \{k > 0 : Q \in \cF^k (Q)\}$, with the convention $\min \emptyset = \infty$.
\end{definition}

Recurrence times in a numerical Morse set can be effectively computed. We propose a specific algorithm, prove its correctness, and determine its computational complexity (with proof) in Appendix~\ref{app:recurrence}.

\label{frrRigorous}
At this point we emphasize the fact that knowledge of the recurrence time of a grid element provides certain rigorous information about the dynamics of the points that belong to the grid element. Specifically, if $\rec(Q)=r$ then for every $x \in Q$, we know that if the trajectory $x = x_0, x_1, \ldots, x_{r-1}$ stays in $|\cN|$ then $x_i \notin Q$ for $i = 1, \ldots, r-1$. In particular, there is no periodic orbit contained in $|\cN|$ that goes through $Q$ whose period is below $r$. On the other hand, if $\rec(Q)=r$ then we know that there exists a \emph{$\delta$-pseudo-orbit} $x_0, \ldots, x_r$, with $\delta = \max_{Q \in \cN} \diam (|\cF(Q)|)$, that begins and ends in $Q$; this is a sequence of points $x_i \in |\cN|$ such that $x_0, x_r \in Q$, and $\dist (f(x_{i-1}),x_{i}) < \delta$ for all $i\in \{1,\ldots,r\}$.

In order to illustrate the usefulness of recurrence times, we show three examples. Figure~\ref{fig:recHenon} shows a numerical Morse set constructed for the well-known $2$-dimensional H\'{e}non map \cite{Henon} that exhibits chaotic dynamics. The recurrence times are low, and different values are scattered unevenly throughout the entire set.
It seems that some orbits with low periods were \label{identified} identified correctly, especially the fixed point and the period-two orbit.
Figure~\ref{fig:recLeslie} shows an example of a large numerical Morse set computed for the non-linear Leslie population model discussed in~\cite{Arai}. The recurrence times reveal its internal structure. Indeed, separate isolating neighborhoods for a fixed point in the middle and period-$3$ orbits can be found for nearby parameters or when conducting the computation at a much finer resolution.
Figure~\ref{fig:recVanderpol}, on the other hand, shows a numerical Morse set that is an isolating neighborhood for a time-$t$ discretization of the Van der Pol oscillator flow on $\bR^2$ for the parameters for which the expected attracting periodic trajectory is observed. In this example, one can clearly see the recurrence time $1$ that corresponds to the fixed point in the middle, and high recurrence times around $50$ that identify the stable periodic trajectory.

\begin{figure}[htbp]
\centering
\includegraphics[width=0.6\textwidth]{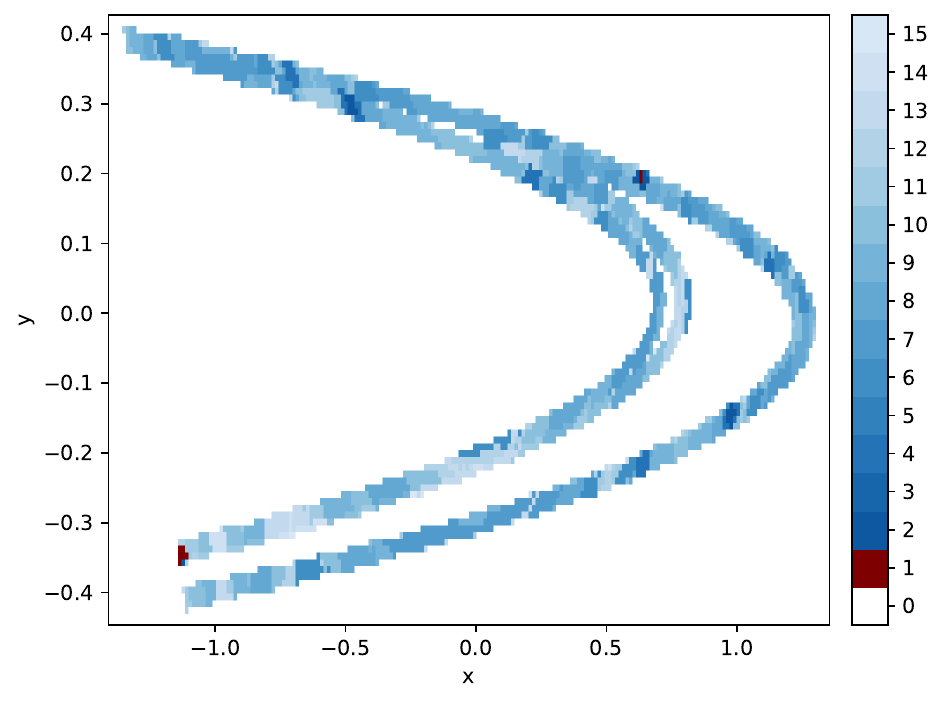} \hfill
\includegraphics[width=0.39\textwidth]{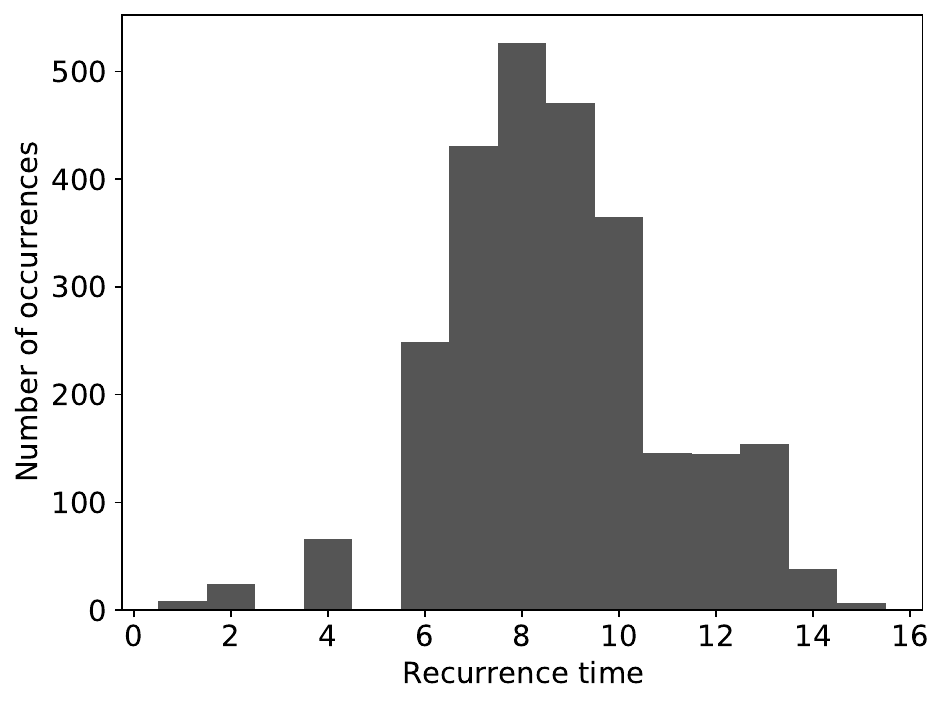}
\caption{\label{fig:recHenon}%
Recurrence diagram for a numerical Morse set constructed for the H\'{e}non map that encloses a chaotic attractor, and a histogram that shows the amounts of the various recurrence times encountered. The fixed points and period-$2$ orbits are clearly visible, as well as the complicated inner structure of the set.}
\end{figure}

\begin{figure}[htbp]
\centering
\includegraphics[width=0.6\textwidth]{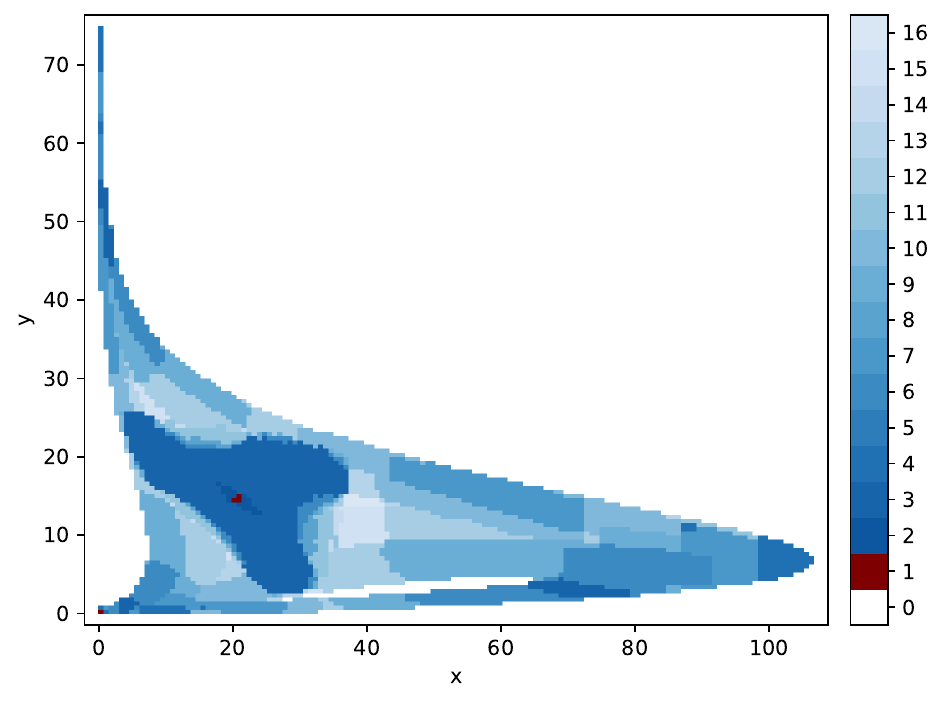} \hfill
\includegraphics[width=0.39\textwidth]{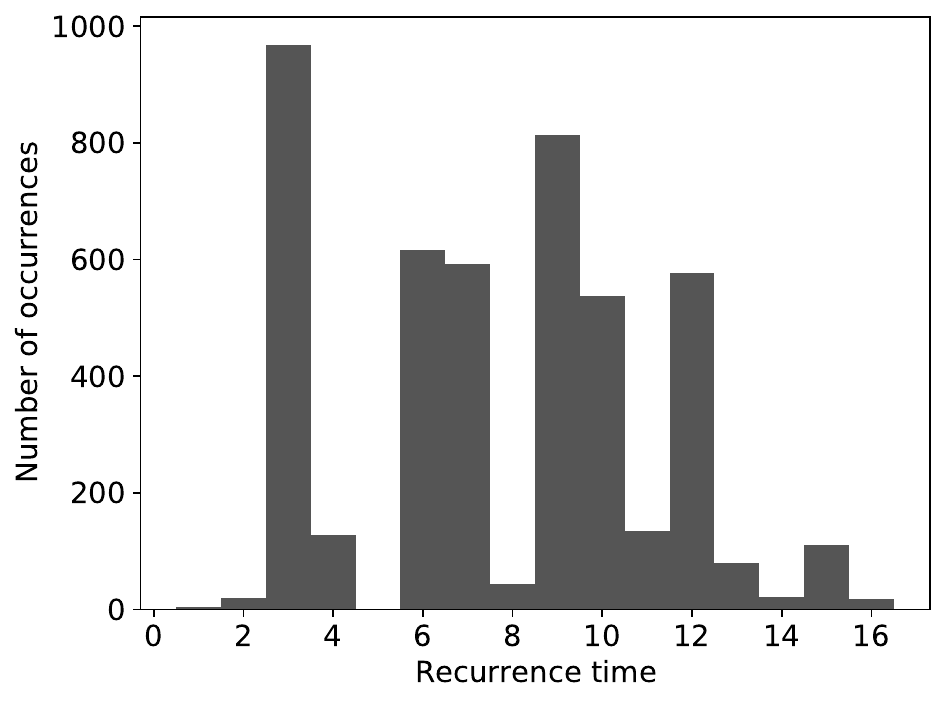}
\caption{\label{fig:recLeslie}%
Recurrence diagram for a numerical Morse set constructed for the Leslie population model map discussed in \cite{Arai}, and a histogram that shows the amounts of the various recurrence times encountered. Inner structure of the large isolating neighborhood is revealed, with a fixed point and period-$3$ orbits.}
\end{figure}

\begin{figure}[htbp]
\centering
\includegraphics[width=0.6\textwidth]{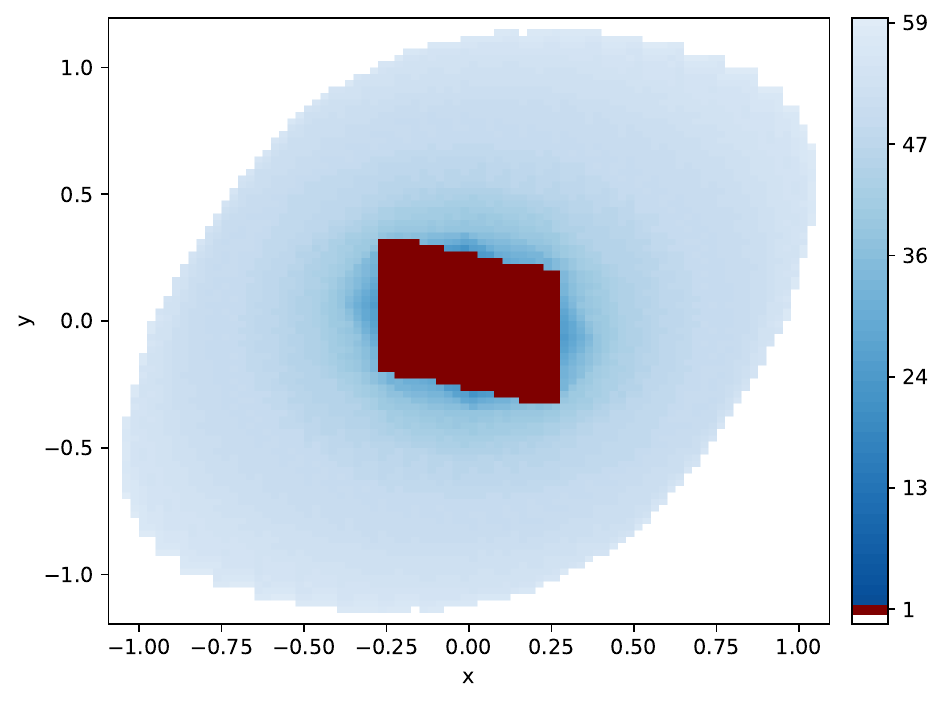} \hfill
\includegraphics[width=0.39\textwidth]{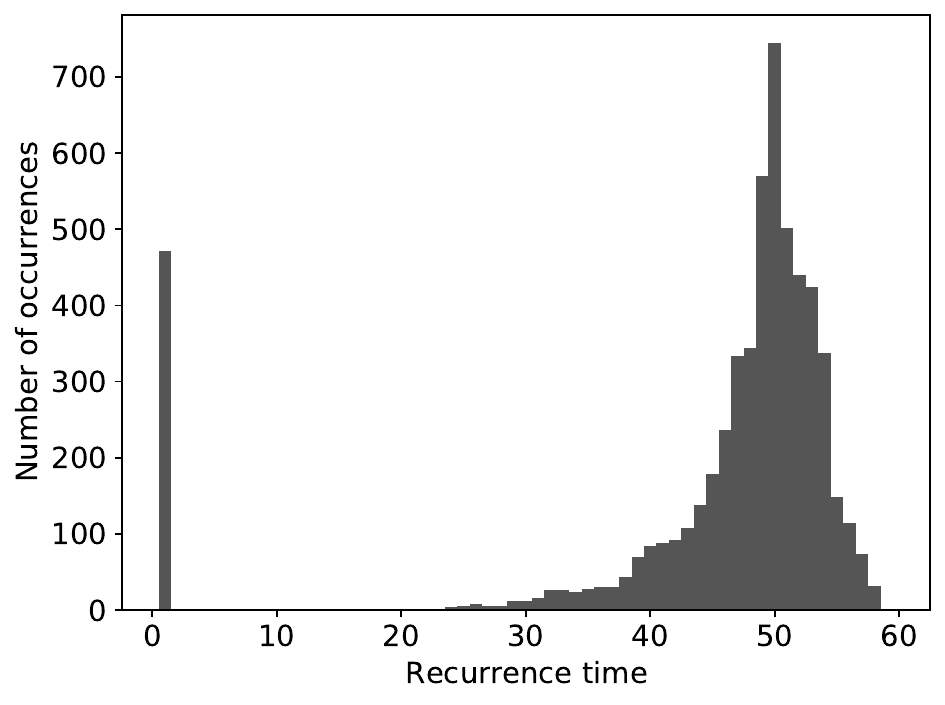}
\caption{\label{fig:recVanderpol}%
Recurrence diagram for a numerical Morse set constructed for a time-$t$ map for the Van der Pol oscillator with a periodic attracting trajectory, and a histogram that shows the amounts of the various recurrence times encountered. The recurrence times for the map vary from $1$ around the fixed point at the origin up to $59$ at the border of the set.}
\end{figure}

Based on the illustrations, one may conjecture that high local variation in the recurrence time is an indicator of complicated dynamics, such as chaos. In Section~\ref{sec:FinVar}, we introduce the notion of variation of the Finite Resolution Recurrence that we further use to effectively quantify this local variation in recurrence time.


\FloatBarrier
\subsection{Recurrence analysis of large invariant sets in the Chialvo model}
\label{sec:recComp}

Through numerical simulations, we identified three kinds of dynamics that yield large numerical Morse sets in the Chialvo model. They are shown in Figure~\ref{fig:traj4},
\label{rev:fourParam}
and were found for some parameter values close to those considered in the paper, and thus not directly related to the continuation diagram shown in Figure~\ref{fig:n12cont}.
The initial condition $(2,1.8)$ was taken in all the cases. Plotting the trajectories for $100{,}000$ iterations was started after $1$ million of initial iterations to allow the trajectories settle down on the attractors; we remark that $10{,}000$ initial iterations were not enough for the winding orbit. In addition to them, an attracting fixed point is shown that appears for another combination of the parameters.

The fixed point around $(0,2.5)$ is shown in Figure~\ref{fig:traj4} in black, the prominent periodic orbit in orange, an orbit that seems to follow a chaotic attractor is shown in partly transparent blue, and a winding periodic orbit with weak attraction is shown in red. This last orbit does not look like part of a chaotic trajectory, because the few dozens points in the figure actually correspond to $100{,}000$ iterates, so this is most likely a periodic attractor. Note the small differences in the bifurcation parameters that yield the qualitatively different asymptotic behavior of the orbits.

\label{rev:numEvidence}
We remark that we did not prove the existence of the attractors nor chaotic dynamics shown in Figure~\ref{fig:traj4}; however, the numerical simulations that we conducted can be treated as strong numerical evidence in favor of such conjectures.

\begin{figure}[htbp]
\centerline{\includegraphics[width=0.8\textwidth]{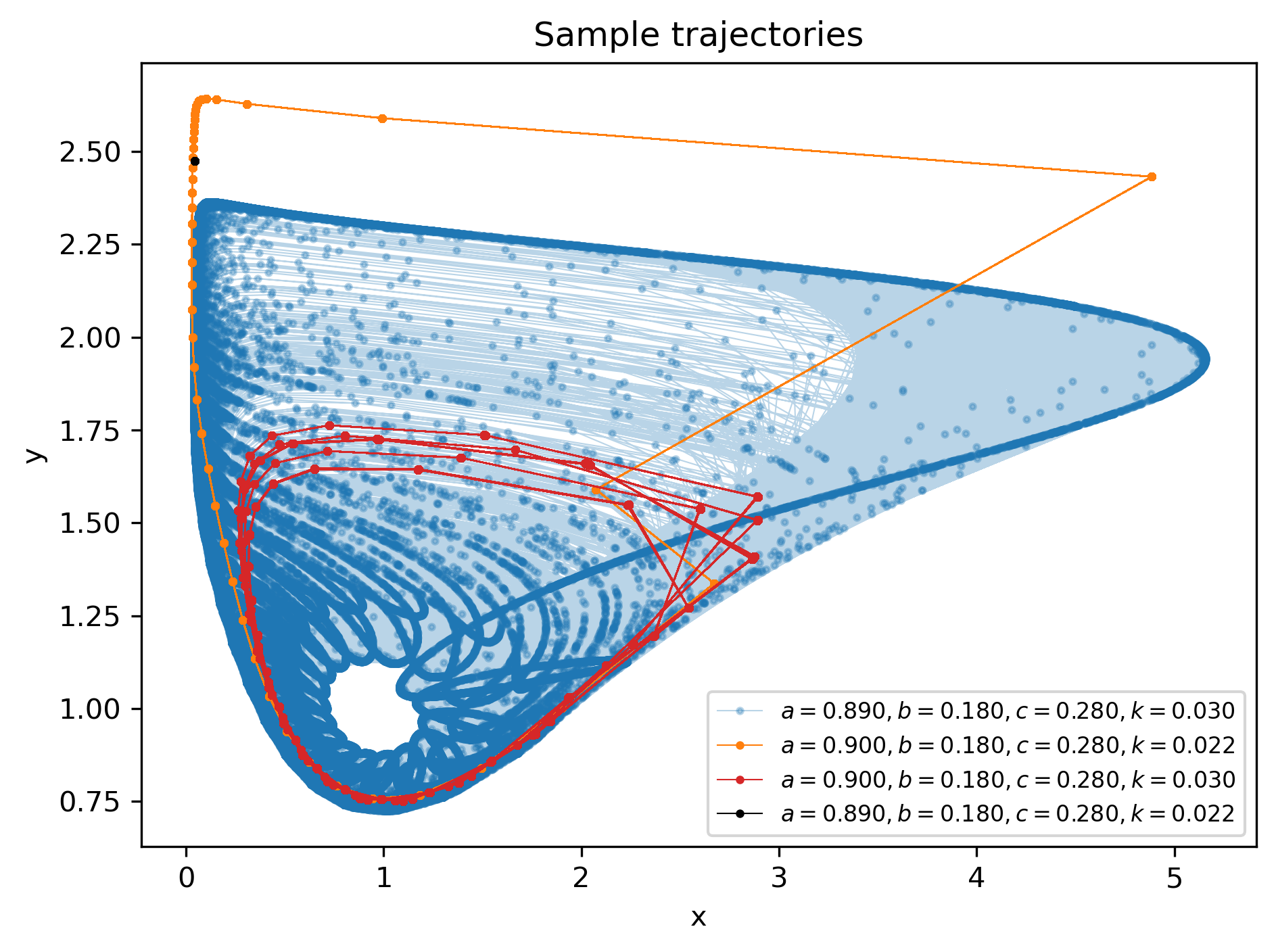}}
\caption{\label{fig:traj4}%
Four specific types of attracting orbits observed in numerical simulations in the Chialvo model, discussed in Section~\ref{sec:recComp}.}
\end{figure}

Recurrence diagrams for the numerical Morse sets constructed for the three combinations of parameters shown in Figure \ref{fig:traj4} that yield large sets are shown in Figures \ref{fig:recNeuron3}--\ref{fig:recNeuron5}. Since the time complexity of the algorithm is worse than $O(|V|^2)$, we conducted the computations at a relatively low resolution in the phase space in order to quickly obtain the results (in less than $2$ minutes each) and to clearly illustrate the results. The constructed numerical Morse sets consist of $8{,}265$, $7{,}740$, and $6{,}740$ grid elements, respectively. Instead of the exact values of the parameters $a$ and $k$, intervals of width $0.0001$ were taken in order to make the computations more realistic.

Recurrence histograms are shown along with each recurrence diagram in Figures \ref{fig:recNeuron3}--\ref{fig:recNeuron5}. There are some differences between these histograms that reflect subtle differences in recurrence diagrams. They are discussed in the next sections.

\begin{figure}[htbp]
\centering
\includegraphics[width=0.6\textwidth]{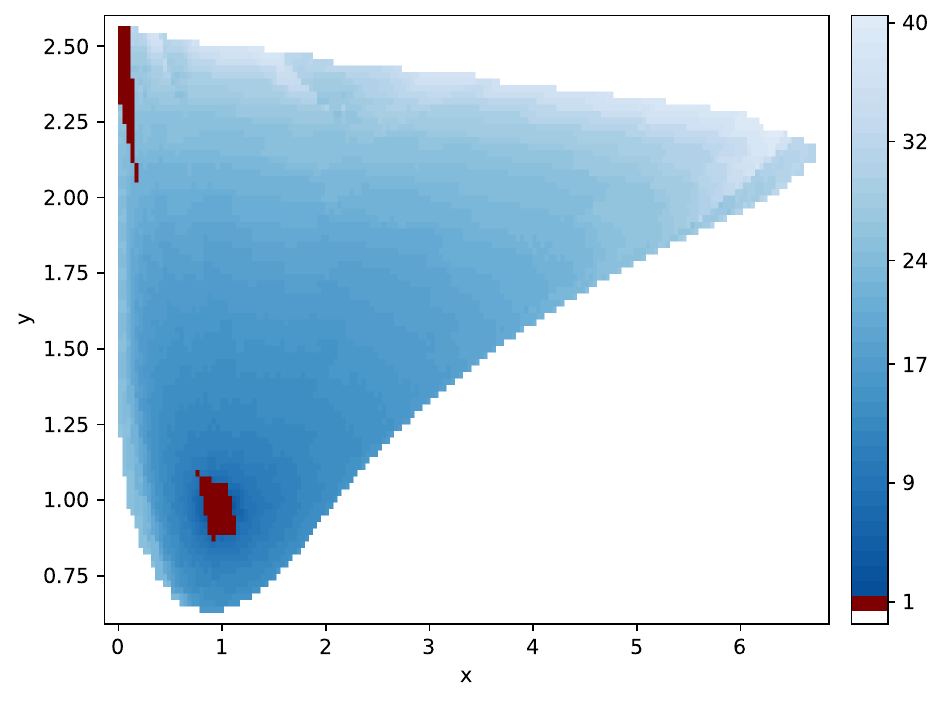}\hfill
\includegraphics[width=0.39\textwidth]{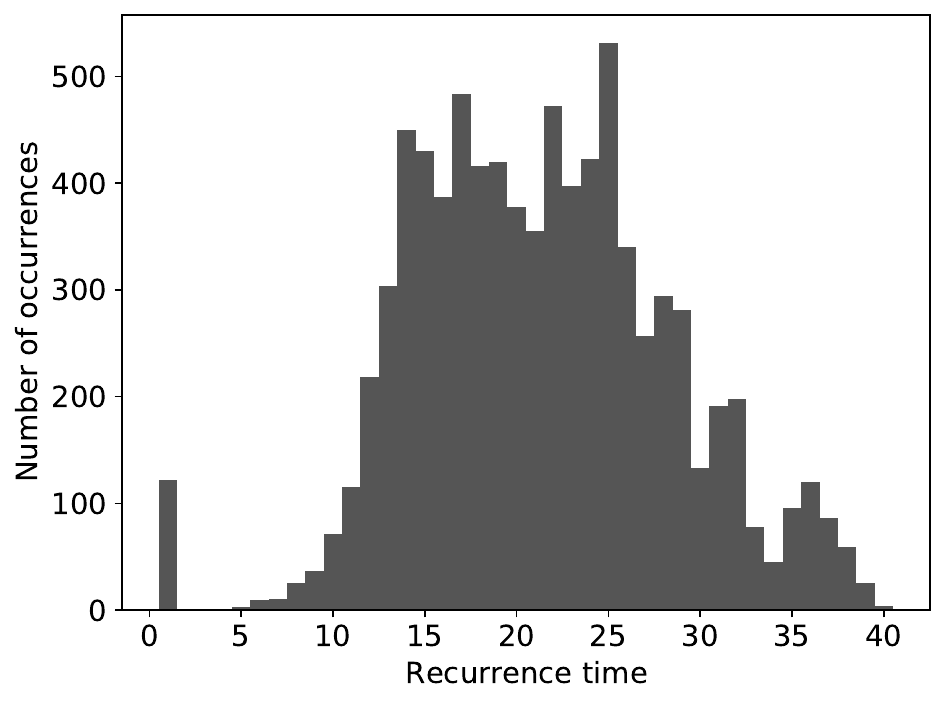}
\caption{\label{fig:recNeuron3}%
Recurrence diagram with histogram computed for the large numerical Morse set found for the Chialvo map with $a\approx 0.89$, $b=0.18$, $c=0.28$, and $k\approx 0.03$, for which $\NFRRV = 1.447$. This is the range of parameters that corresponds to the orbit that looks like a chaotic one in numerical simulations whose results are shown in Figure~\ref{fig:traj4}.}
\end{figure}

\begin{figure}[htbp]
\centering
\includegraphics[width=0.6\textwidth]{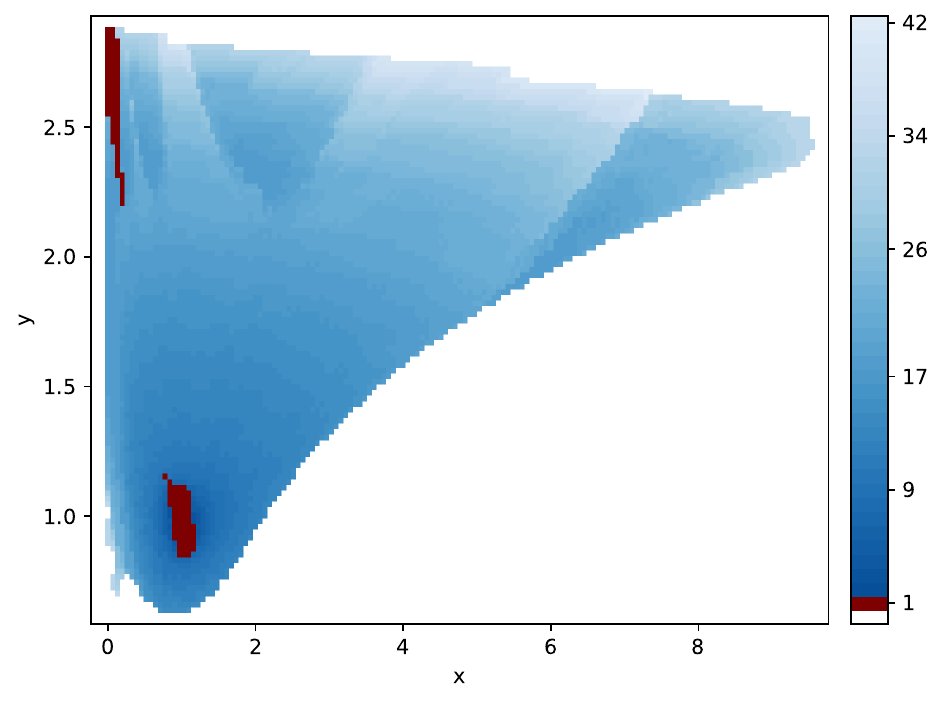}\hfill
\includegraphics[width=0.39\textwidth]{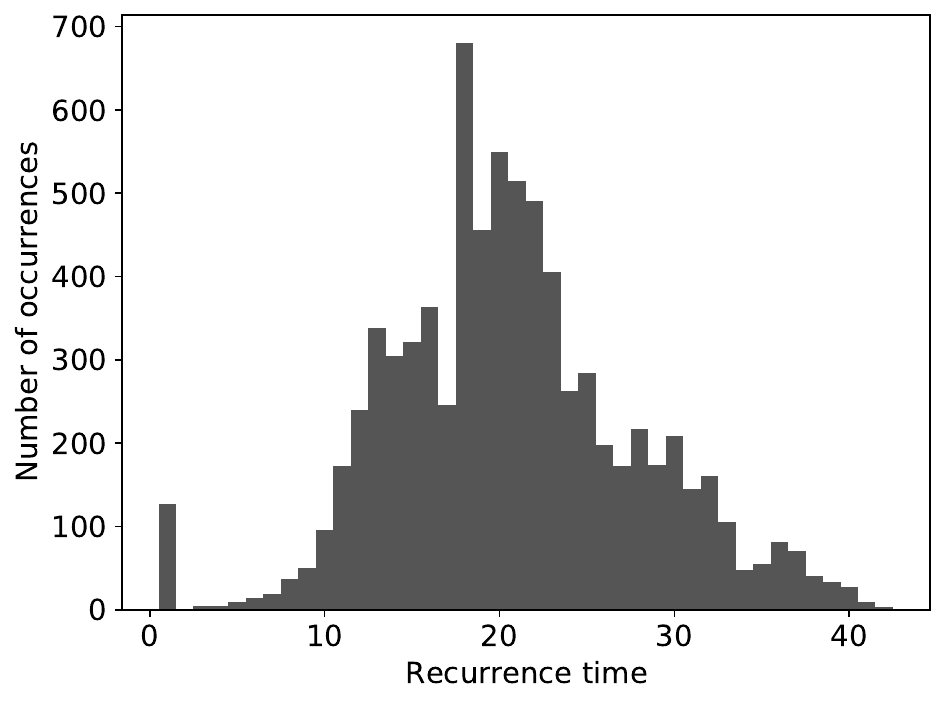}
\caption{\label{fig:recNeuron4}%
Recurrence diagram with histogram computed for the large numerical Morse set found for the Chialvo map with $a\approx 0.9$, $b=0.18$, $c=0.28$, and $k\approx 0.022$, for which $\NFRRV = 1.661$. This is the range of parameters that corresponds to the clear periodic orbit observed in numerical simulations whose results are shown in Figure~\ref{fig:traj4}.}
\end{figure}

\begin{figure}[htbp]
\centering
\includegraphics[width=0.6\textwidth]{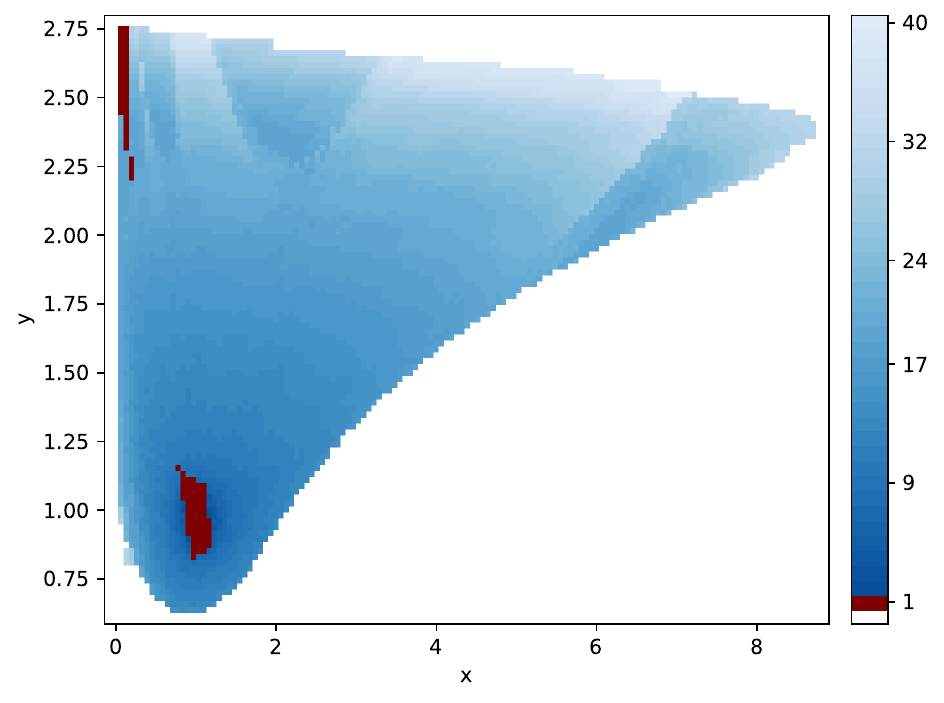}\hfill
\includegraphics[width=0.39\textwidth]{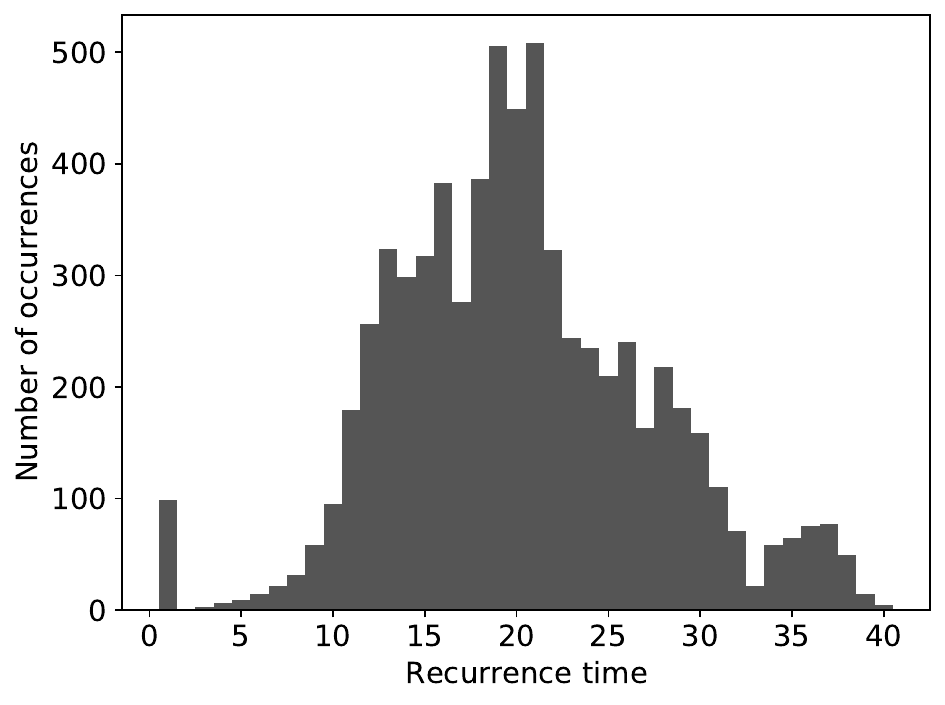}
\caption{\label{fig:recNeuron5}%
Recurrence diagram with histogram computed for the large numerical Morse set found for the Chialvo map with $a\approx 0.9$, $b=0.18$, $c=0.28$, and $k\approx 0.03$, for which $\NFRRV = 1.598$. This is the range of parameters that corresponds to the periodic orbit winding multiple times observed in numerical simulations whose results are shown in Figure~\ref{fig:traj4}.}
\end{figure}


\FloatBarrier
\subsection{Finite Resolution Recurrence Variation}
\label{sec:FinVar}

Let us recall the notion of variation of a function in multiple variables introduced by Vitali \cite{VitaliVar}. It is a generalization of the well known notion of variation of a real-valued function in one variable.

\begin{definition}[see \cite{VitaliVar}]
\label{def:VitaliVar}
Let $f \colon B \to \bR$ be a function defined on a rectangular set $B = [a_1,b_1] \times \cdots \times [a_n,b_n] \subset \bR^n$. For $k \in \{1, \ldots, n\}$ and $h_k > 0$, define
\[
\Delta^{k}_{h_k}(f,x) := f(x_1,\ldots,x_k+h_k,\ldots,x_n) - f(x_1,\ldots,x_k,\ldots,x_n),
\]
and recursively:
\begin{eqnarray*}
\Delta_{h_1} (f,x) & := & \Delta^{1}_{h_1} (f,x), \\
\Delta_{h_1,\ldots,h_k} (f,x) & := & \Delta^{k}_{h_k} \big(\Delta_{h_1,\ldots,h_{k-1}},x\big).
\end{eqnarray*}
\emph{Vitali variation} of $f$ on $B$ is defined as the supremum of the sums
\begin{equation}
\label{VitaliSum}
\sum_{i_1=1}^{N_1} \cdots \sum_{i_n=1}^{N_n} \left|
\Delta_{h_1^{i_1},\ldots,h_n^{i_n}} \big(f,(x_1^{i_1},\ldots,x_n^{i_n})\big)
\right|
\end{equation}
over all the possible finite subdivisions of $[a_1,b_1]$, \ldots, $[a_n,b_n]$, where $h^{i_j}_k$ is the difference between the subdivision points $x_{k}^{1},\ldots,x_{k}^{N_k+1}$ of $[a_k,b_k]$.
\end{definition}

Note that the recurrence time function is constant on the interior of each grid element, and can be set to the minimum of the values of the intersecting grid elements at each boundary point. Therefore, for the practical computation of this variation, we only need to check the differences in the values of the function on adjacent grid elements. In two dimensions, the formula on a grid of $(N+1) \times (M+1)$ rectangular grid elements reduces to the following:
\begin{equation}
\label{Vitali2D}
\sum_{i=1}^{N} \sum_{j=1}^{M} \left| f(x_{i+1},x_{j+1}) - f(x_{i+1},x_j) - f(x_i,x_{j+1}) + f(x_i,x_j) \right|,
\end{equation}
and we consider only those values for which the four grid elements are all in the set that we analyze. We make this more precise in the following.

\begin{definition}[Finite Resolution Recurrence Variation (FRRV)]
\label{def:FinVar}
Let $\cN \subset \cG(B)$ and let $f \colon \cN \to \bR$.
Given an $n$-tuple of integers $(j_1,\ldots,j_n)$, denote by $Q(j_1,\ldots,j_n)$ the grid element in $\cG(B)$ referred to by this $n$-tuple of integers.
Let $I(\cN)$ denote the set of those grid elements $Q(j_1,\ldots,j_n) \in \cN$ for which all the grid elements of the form $Q(j_1+\delta_1,\ldots,j_n+\delta_n)$, where $\delta_i \in \{0,1\}$, are in $\cN$.
Following the idea of Definition~\ref{def:VitaliVar}, define
\[
\Delta_{k}\big(f,Q(j_1,\ldots,j_n)\big) := f\big(Q(j_1,\ldots,j_k+1,\ldots,j_n)\big) - f\big(Q(j_1,\ldots,j_k,\ldots,j_n)\big),
\]
and recursively:
\[
\Delta_{1,\ldots,k} \big(f,Q(j_1,\ldots,j_n)\big) := \Delta_{k} \big(\Delta_{1,\ldots,{k-1}},Q(j_1,\ldots,j_n)\big).
\]
\emph{Finite Resolution Recurrence Variation} of $f$ on $\cN$ is defined as the sum
\begin{equation}
\label{FinVar}
\FRRV(f,\cN) = \sum_{K \in I(\cN)} \left|
\Delta_{1,\ldots,n} (f,K)
\right|
\end{equation}
\end{definition}


\FloatBarrier
\subsection{Normalized Finite Resolution Recurrence Variation}
\label{sec:NormVar}

It is clear that the variation grows with the increase in the size of the set, because there are more and more adjacent grid elements along the border with the same difference. Therefore, we propose to normalize the resulting variation by dividing it by a quantity that corresponds to the diameter of the set on which the variation is computed, so that the normalized variation is independent of the resolution at which we investigate the dynamics. In an $n$-dimensional system, we propose to divide by $\sqrt[n]{\card \cN}$.

Moreover, instead of the absolute value of the variation of the recurrence, we are more interested in the relative variation in comparison to the average recurrence times. For example, the recurrence times in the Van der Pol system (see Figure~\ref{fig:recVanderpol}) are considerably larger than in the H\'{e}non system (see Figure~\ref{fig:recHenon}), and thus small relative fluctuations in the recurrence times might yield considerably higher values of the variation in the former map than in the latter. In order to overcome this problem, it seems reasonable to further normalize the variation by dividing it by the mean recurrence time encountered. We thus propose the following.

\begin{definition}[NFRRV]
\label{def:NormFinVar}
Let $\cN \subset \cG(B)$.
Let $f \colon \cN \to \bR$.
Let $\overline{\rec} (\cN)$ denote mean recurrence in $\cN$, that is,
$\sum_{Q \in \cN} \rec (Q) / \card \cN$.
\emph{Normalized Finite Resolution Recurrence Variation (NFRRV)} of $f$ in $\cN$ is the following quantity:
\begin{equation}
\label{NormFinVar}
\NFRRV(f,\cN) := \frac{\FRRV(f,\cN)}{\overline{\rec} (\cN) \sqrt[n]{\card \cN}},
\end{equation}
where $n$ is the dimension of the phase space.
\end{definition}

We show the results of computation of Finite Resolution Recurrence Variation (FRRV) and its normalized version (NFRRV) in Section~\ref{sec:varComp} below.


\FloatBarrier
\subsection{Computation of the Normalized Finite Resolution Recurrence Variation (NFRRV)}
\label{sec:varComp}

Let us begin by considering the Finite Resolution Recurrence Variation (FRRV) introduced in Section~\ref{sec:FinVar}.
In order to check how this quantity changes with the change of the resolution at which a numerical Morse set is computed, we conducted the following experiment. We chose the six specific dynamical systems whose FRR was discussed so far: the three sample systems described in Section~\ref{sec:frr} and the three cases of the Chialvo model shown in Section~\ref{sec:recComp}. For each of these systems, we constructed a numerical Morse set at a few different resolutions. The computed FRRV is shown in Figure~\ref{fig:FinVar} as a function of the size of the set, counted in terms of the number of grid elements.
One can immediately notice the increasing trend in all the cases, which justifies the need for normalization introduced in Section~\ref{sec:NormVar}.

\begin{figure}[htbp]
\centering
\includegraphics[width=0.7\textwidth]{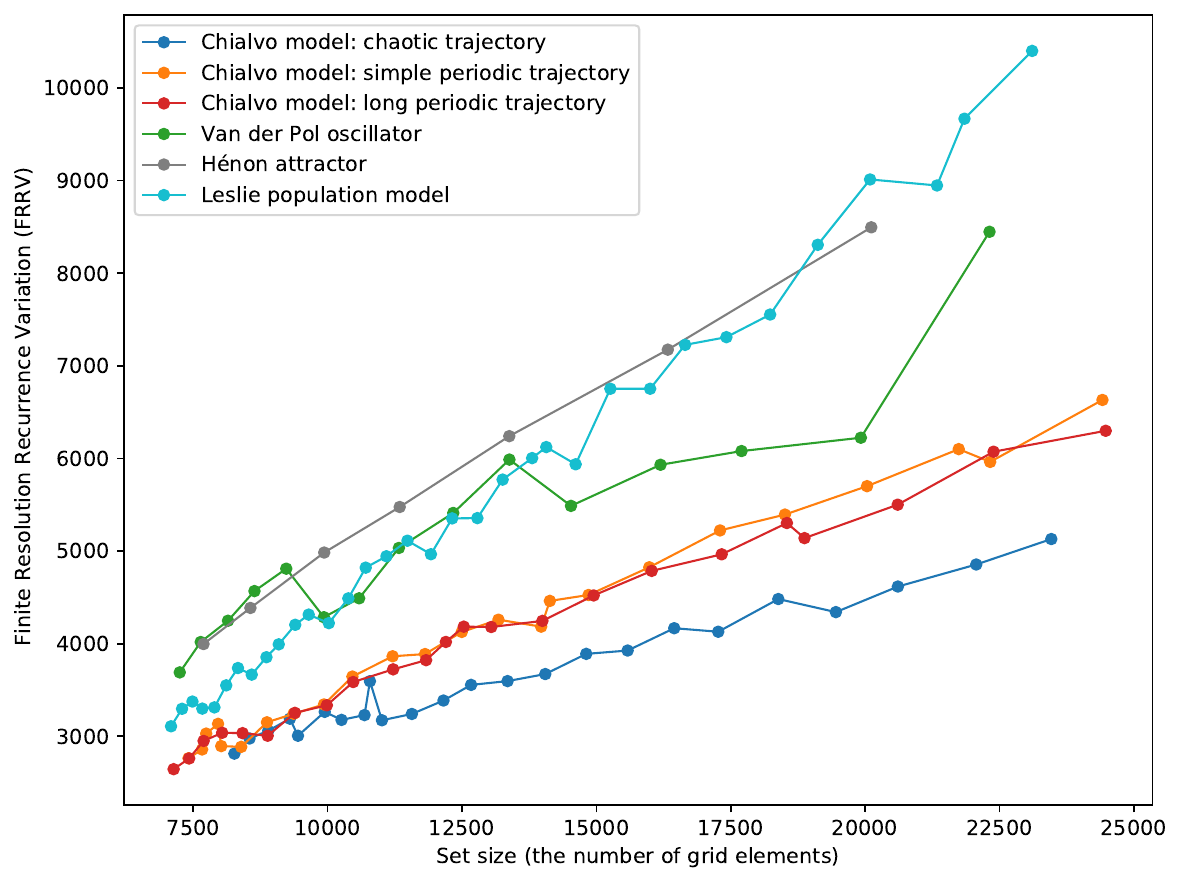}
\caption{\label{fig:FinVar}%
Finite Resolution Recurrence Variation ($\FRRV$) computed for six sample systems at several different resolutions.}
\end{figure}

\begin{figure}[htbp]
\centering
\includegraphics[width=0.7\textwidth]{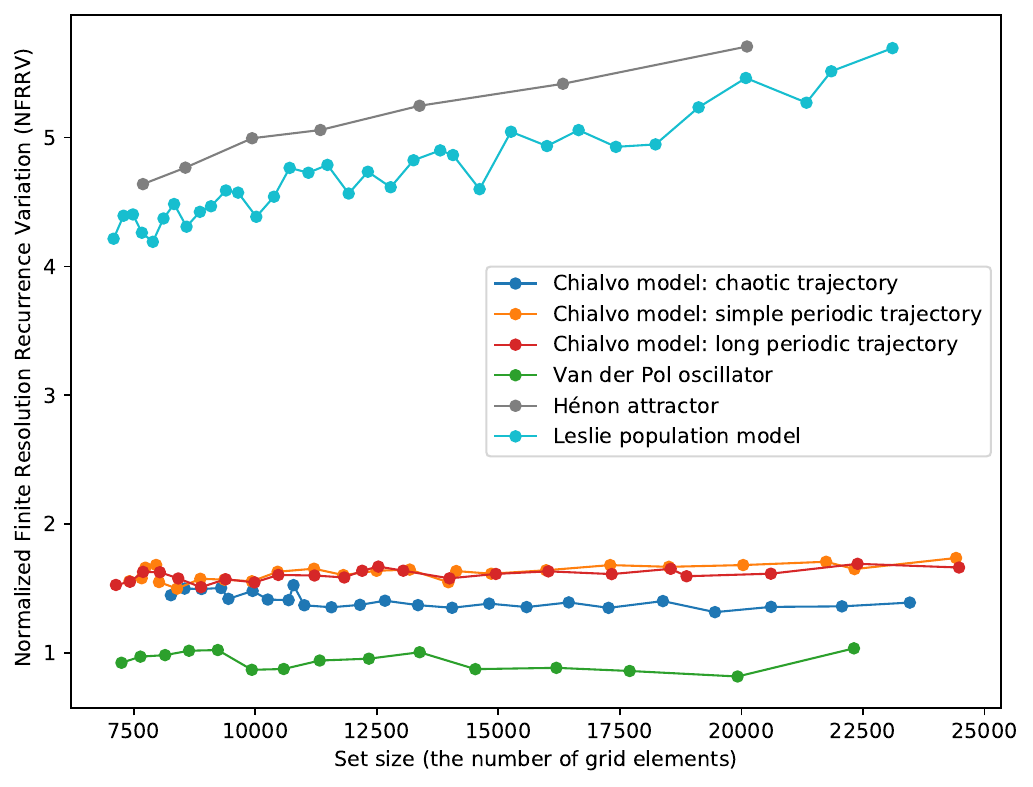}
\caption{\label{fig:NormFinVar}%
Normalized finite resolution recurrence variation ($\NFRRV$) computed for six sample systems at several different resolutions.}
\end{figure}

Figure~\ref{fig:NormFinVar} shows the Normalized Finite Resolution Recurrence Variation (NFRRV) computed for the six sets. One can see that the values computed for the systems that experience complicated dynamics (H\'{e}non, Leslie) still have an increasing trend, which is clearly due to the fact that with the increase in the resolution, more and more details of the dynamics are revealed. However, the values of $\NFRRV$ computed for the Van der Pol oscillator, as well as for the Chialvo model, are essentially constant, with the latter somewhat above the former, which indicates a slightly more complicated dynamics. The fact that the Chialvo model in the parameter regimes corresponding to chaos and long periodic orbits shows smaller values of $\NFRRV$ than the H\'{e}non attractor or the chaotic Leslie model might also be due to the fact that the Chialvo model behaves like a fast-slow system, in which trajectories spend long time in some parts of the phase space and very quickly pass through others, and this behavior is common for almost all trajectories.

A probably counter-intuitive observation is that out of the three sets constructed for the Chialvo model, the lowest values of $\NFRRV$ are encountered by the set with chaotic dynamics. Since the chaotic trajectory wanders round the set in rotational motion, its effect is the actual averaging of recurrence times, which can be seen in Figure~\ref{fig:recNeuron3} as compared to Figures \ref{fig:recNeuron4} and~\ref{fig:recNeuron5}. This is in contrast to the H\'{e}non attractor in which the trajectories tend to run apart each other due to the positive Lyapunov exponent.
\label{rev:slowFast1}
Indeed, the type of chaotic dynamics is substantially different in both cases, because the Chialvo model is a slow-fast system, and the method introduced in \cite{Arai} tends to capture the ``fast'' dynamics only.

\label{rev:frrv}
Let us indicate at this point that computation of FRRV gives rise to method for conducting numerical comparison of dynamics. By comparing the system under investigation with some classical dynamical systems one can obtain certain quantitative information on the complexity of the dynamics. Although FRR values provide rigorous information (e.g. on the lower bound of recurrence time for the actual trajectories, as discussed on page \pageref{frrRigorous} in Section~\ref{frrRigorous}), the FRRV value itself does not provide rigorous evidence of the type of recognized dynamics. On the other hand, FRRV seems to be promising heuristic that enables classification of the dynamics and is in particular useful for the identification of areas of potential chaotic behavior. Thus one of possible applications of FRRV is to precede the application of rigorous numerical proof methods, for which the areas of analysis usually have to be strictly defined.

Figure \ref{fig:varNeuron} shows the results of comprehensive computation of NFRRV for the Chialvo model for a wide range of parameters. Since the computation of FRR is considerably more demanding (in terms of CPU time and memory usage) than the set-oriented analysis of dynamics discussed in Section~\ref{sec:appl}, we conducted the computations for a somewhat narrower range of the parameters $(b,k) \in \Lambda_3 := [0,0.5] \times [0.017,0.027]$ subdivided into $200 \times 50$ boxes, and with the resolution in the phase space reduced to $256 \times 256$. The phase space was taken as $B_3 := [-0.1,7.5] \times [-1.3,2.7]$. Like previously, we fixed $a := 0.89$ and $c := 0.28$. The computation time was a little over $372$ CPU-hours and the memory usage did not exceed $491$~MB per process. The continuation diagram that we obtained was similar to the relevant part of the diagram shown in Figure~\ref{fig:n12cont}, and the Conley-Morse graphs with phase space images can be browsed at~\cite{www}, additionally with recurrence diagrams and the corresponding histograms created for the largest numerical Morse set observed in each computation. Since the value of NFRRV computed for small sets (say, with $\card \cN < 1000$) does not correspond to the intuitive interpretation, the values computed for all the numerical Morse sets of less than $1000$ elements were shown in gray in Figure~\ref{fig:varNeuron}. We use the results of these computations in Section~\ref{sec:recTool}.

\begin{figure}[htbp]
\includegraphics[height=0.77\textwidth]{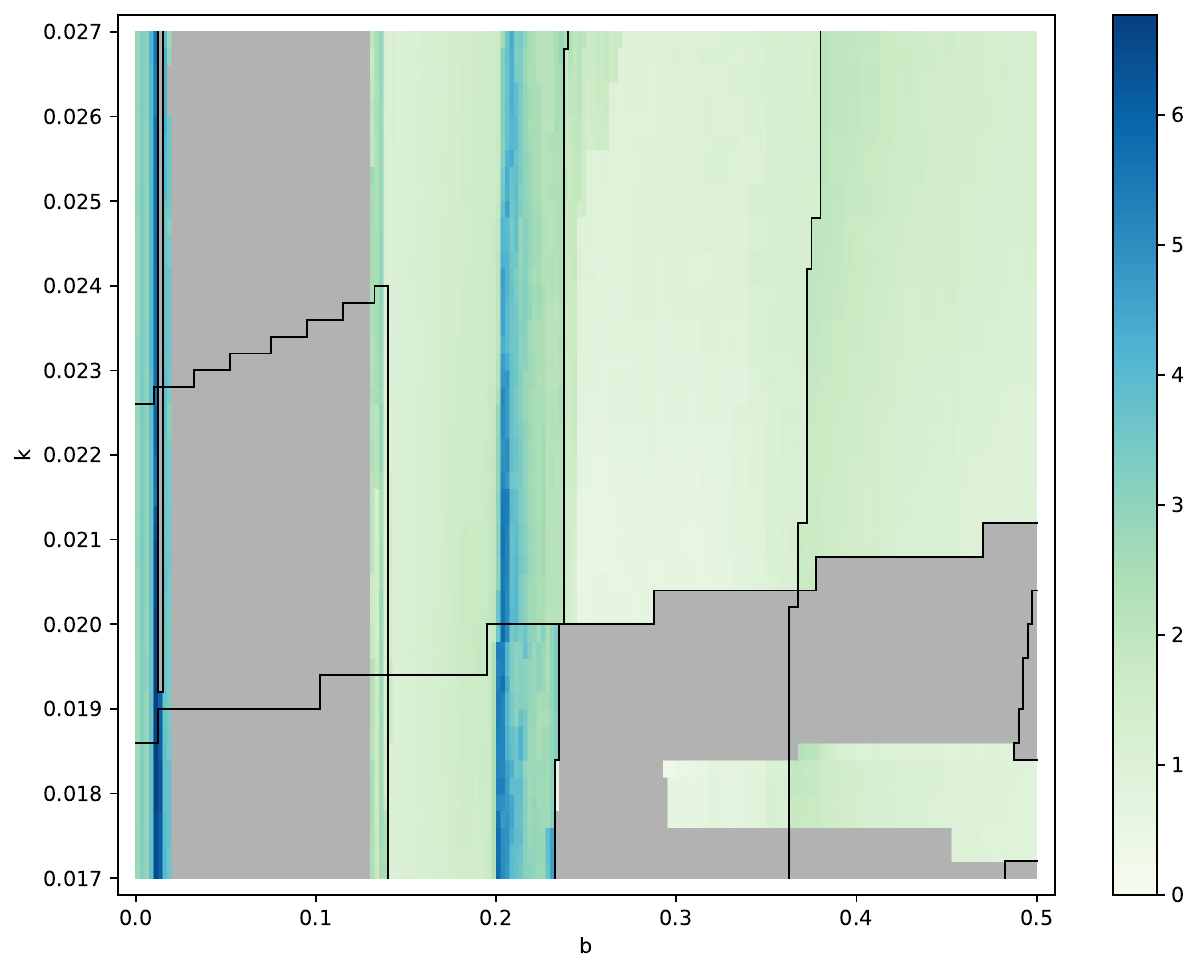}$\qquad$
\caption{\label{fig:varNeuron}%
Normalized Finite Resolution Recurrence Variation ($\NFRRV$) computed for the Chialvo model \eqref{main} with $a = 0.89$, $b \in [0,0.5]$ (horizontal axis), $c = 0.28$, and $k \in [0.017,0.027]$ (vertical axis). The black lines indicate borders between different continuation classes. Values computed for numerical Morse sets of less than $1{,}000$ elements were grayed out.}
\end{figure}


\section{Recurrence as a tool in classification of dynamics}
\label{sec:recTool}

In order to demonstrate the usefulness of the quantities that were introduced in Sections \ref{sec:frr}--\ref{sec:NormVar} (specifically, FRR and NFRRV), we conducted the following two analyses. We used the data computed in Section~\ref{sec:varComp} to group parameters by the shape of the corresponding recurrence histogram (discussed in Section~\ref{sec:clusteringHist}), and by the value of NFRRV together with the median of FRR (discussed in Section~\ref{sec:clusteringFRR}). We used the unsupervised machine learning density-based spatial clustering algorithm DBSCAN that finds prominent clusters in the data and leaves the remaining points as ``noise''. The results of this kind of clustering are shown in Figures \ref{fig:learn25d_h10} and \ref{fig:learn25d_h00}, respectively.

 We would like to point out the fact that our analysis of variation of recurrence times within large invariant sets is aimed at providing some quantification of chaos in the Chialvo model and developing guidelines for how to analyze other systems. So far, to the best of our knowledge, chaos in the Chialvo model has not been studied analytically with the exception of the work \cite{Jing} where the authors studied chaos in the sense of Marotto (existence of the so-called snap-back repeller) which is closer to the notion of Li-Yorke chaos than chaos in the sense of Devaney (see e.g.\ \cite{Zhao2009}). However, the existence of Marotto chaos in their work is mostly relevant to the case $k<0$, thus not much connected with our results. The existence of the snap-back repeller in biological systems was examined in \cite{Liao2018} with the use of computations conducted in interval arithmetic.


\FloatBarrier
\subsection{Recurrence histograms}
\label{sec:clusteringHist}

In our first analysis, we took the recurrence histograms, such as those shown in Figures \ref{fig:recNeuron3}, \ref{fig:recNeuron4}, \ref{fig:recNeuron5}. We removed the bar corresponding to recurrence $1$ from each of the histograms, and we scaled the horizontal axis to make each histogram consist of precisely $10$ bars. We then normalized the histograms so that the sum of the heights of the bars was equal $1$. Both the original and reduced histograms can be browsed at \cite{www}. We equipped the $10$-dimensional space of all such histograms with the $l^1$ metric (also known as the Manhattan metric, or the taxicab metric). We restricted our attention to all the histograms obtained for the largest numerical Morse set at each of the $200 \times 50$ parameter boxes, provided that this set consisted of at least $1{,}000$ boxes in the phase space. Then we ran the DBSCAN clustering algorithm on this collection of histograms. This algorithm finds core samples of high density and expands clusters from them. It takes two parameters: the maximum distance $\varepsilon$ between two samples for one to be considered in the neighborhood of the other, and the number $p$ of samples in a neighborhood of a point to be considered as a core point. We tried all the combinations of $\varepsilon \in \{0.1, 0.2, \ldots, 1.3, 1.4\}$ and $p \in \{50, 100, 150, 200, 250, 300\}$. The most useful clustering was obtained for $\varepsilon = 0.2$ and $p = 150$, and is shown in Figure~\ref{fig:learn25d_h10}.

\begin{figure}[htbp]
\centering
\includegraphics[height=0.7\textwidth]{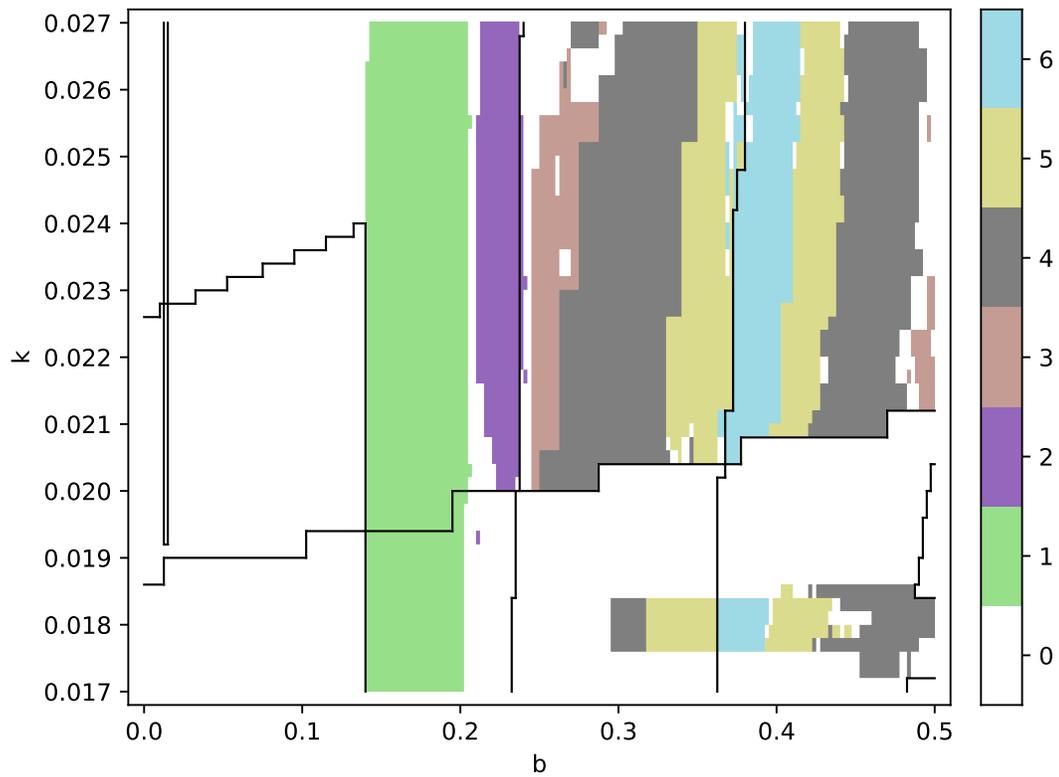}$\qquad$
\caption{\label{fig:learn25d_h10}%
Clusters of similar recurrence histograms found by the DBSCAN algorithm in the Chialvo model \eqref{main} with $a = 0.89$, $b \in [0,0.5]$ (horizontal axis), $c = 0.28$, and $k \in [0.017,0.027]$ (vertical axis). The black lines indicate borders between different continuation classes. Histograms for numerical Morse sets of less than $1{,}000$ elements were not taken into consideration.}
\end{figure}

It is interesting to see the histograms that correspond to the consecutive classes; they are shown in Figure~\ref{fig:learn25d_hist}. The first histogram (Class~1) shows the presence of many boxes in the numerical Morse set with a wide range of recurrence times. The shape of the histogram resembles normal distribution skewed to the right and thus is most similar to the histogram obtained for the H\'{e}non map (see Figure~\ref{fig:recHenon}). This shape of a histogram might thus be an indicator of chaotic dynamics. Indeed, this region of parameters coincides with the neighborhood of part of the region in which large attractors were found in numerical simulations (see Figure~\ref{fig:n25dsize} in Appendix~\ref{app:simulations}) that also indicate likely chaotic dynamics.

The second histogram (Class~2) has three major peaks: the highest one in the middle, another at high recurrence, and a considerably less prominent one for some lower recurrence values. This result reflects the internal structure of the numerical Morse set that can be seen in Figure~\ref{fig:recNeuron25d_90_31}, and both the histogram and the recurrence diagram resemble the situation observed in the Leslie system shown in Figure~\ref{fig:recLeslie}.

The next four histograms are similar to each other, with the high peak around $34$, but with higher and higher recurrence values appearing and thus pushing the peak to the left (it gets shifted from the 3rd position from the right to the 4th, 5th, and 6th positions, respectively). All the cases are somewhat similar to the Van der Pol oscillator case shown in Figure~\ref{fig:recVanderpol}, and might indicate simple, non-chaotic dynamics, at least as perceived at the finite resolution.

\begin{figure}[htbp]
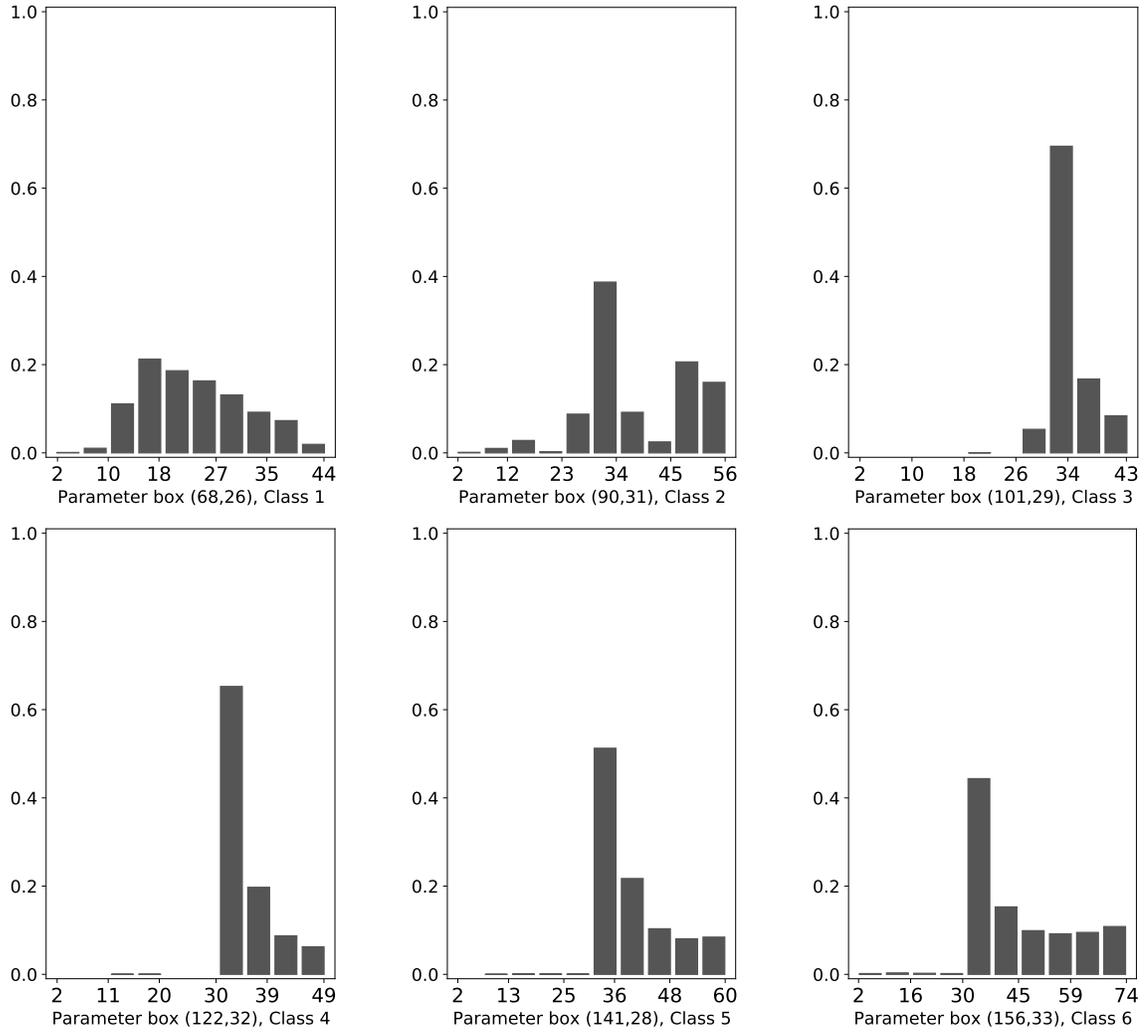

\centerline{\includegraphics[width=0.3\textwidth]{\detokenize{learn25d_68_26}} \hfill
\includegraphics[width=0.3\textwidth]{\detokenize{learn25d_90_31}} \hfill
\includegraphics[width=0.3\textwidth]{\detokenize{learn25d_101_29}}}
\centerline{\includegraphics[width=0.3\textwidth]{\detokenize{learn25d_122_32}} \hfill
\includegraphics[width=0.3\textwidth]{\detokenize{learn25d_141_28}} \hfill
\includegraphics[width=0.3\textwidth]{\detokenize{learn25d_156_33}}}
\caption{\label{fig:learn25d_hist} Reduced recurrence histograms computed for representative parameter boxes from the classes shown in Figure~\ref{fig:learn25d_h10}.}
\end{figure}

\begin{figure}[htbp]
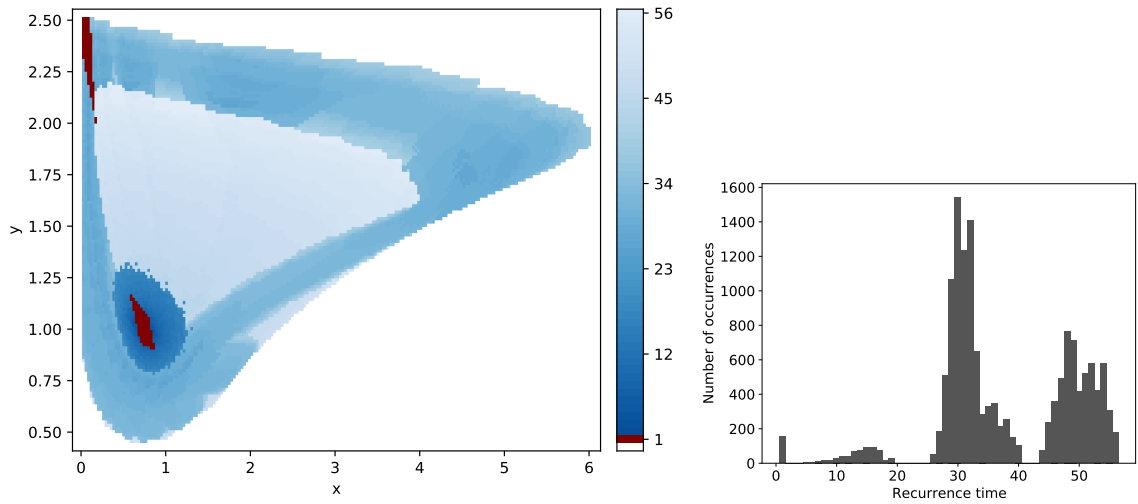

\centering
\includegraphics[width=0.6\textwidth]{\detokenize{recNeuron25d_90_31}}\hfill
\includegraphics[width=0.39\textwidth]{\detokenize{recNeuron25d_90_31Hist}}
\caption{\label{fig:recNeuron25d_90_31}%
Recurrence diagram with histogram computed for the large numerical Morse set found for the Chialvo map with $a = 0.89$, $b \in [0.225,0.2275]$, $c=0.28$, and $k\in [0.0232,0.0234]$. These parameters correspond to the box no.\ $(90,31)$ of parameters in the computation described in Section~\ref{sec:varComp}.}
\end{figure}


\FloatBarrier
\subsection{Clustering based on FRR and NFRRV}
\label{sec:clusteringFRR}

In our second analysis, we took the pairs of two numbers: NFRRV and the median of FRRs computed for the largest numerical Morse set at each of the $200 \times 50$ parameter boxes. We only considered numerical Morse sets of at least $1{,}000$ boxes in the phase space. We standardized each of the two variables (subtracted the mean and divided by the standard deviation). We tried DBSCAN with the same values of $\varepsilon$ and $p$ as in the first analysis, but we first multiplied the variables by $7$ in order to compensate for the lower dimension of the space. We used the $l^1$ metric. The most reasonable clustering was obtained for $\varepsilon = 0.8$ and $p = 100$, and is shown in Figure~\ref{fig:learn25d_h00}.

\begin{figure}[htbp]
\includegraphics[height=0.7\textwidth]{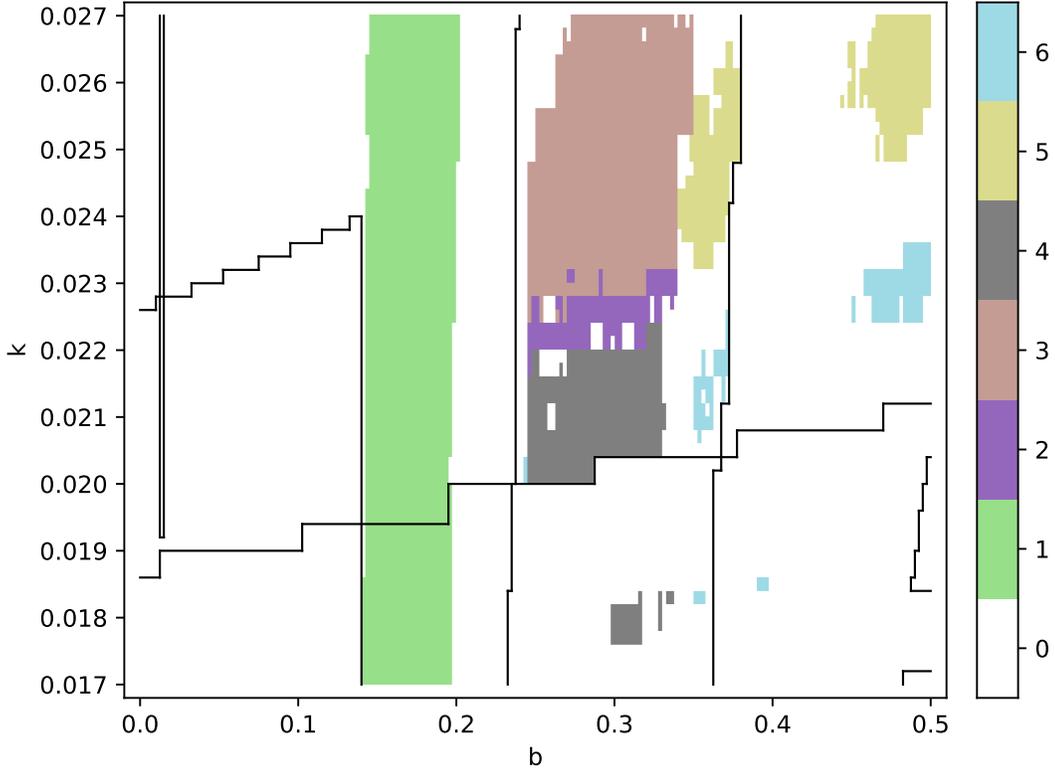}$\qquad$
\caption{\label{fig:learn25d_h00}%
Clusters of similar values of NFRRV together with the median of FRR found by the DBSCAN algorithm in the Chialvo model \eqref{main} with $a = 0.89$, $b \in [0,0.5]$ (horizontal axis), $c = 0.28$, and $k \in [0.017,0.027]$ (vertical axis). The black lines indicate borders between different continuation classes. Histograms for numerical Morse sets of less than $1{,}000$ elements were not taken into consideration.}
\end{figure}

The characteristics of each of the six clusters found in this computation are shown in Table~\ref{tab:clusters}. One can notice that the features that distinguish Cluster~1 from the others is its lowest FRR together with relatively high NFRRV, which might be an indicator of chaotic dynamics. Indeed, low values of FRR were found in the H\'{e}non system (see Figure~\ref{fig:recHenon}) and in the Leslie model (see Figure~\ref{fig:recLeslie}), with very high values of NFRRV (see Figure~\ref{fig:NormFinVar}). Like in the first analysis, this region of parameters coincides with the neighborhood of part of the region in which large attractors were found in numerical simulations (see Figure~\ref{fig:n25dsize} in Appendix~\ref{app:simulations}) that also indicate likely chaotic dynamics.

Cluster~3 has the second lowest median FRR; however, its NFRRV values are relatively low. Even though numerical simulations found some relatively large attractors in some of the parameters in Cluster~3 (see Figure~\ref{fig:n25dsize} in Appendix~\ref{app:simulations}), the FRR and NFRRV values do not suggest the existence of chaotic dynamics and comply with findings of our first analysis.

An interesting observation is that the considerably higher values of NFRRV in Clusters 5 and~6 provide means to distinguish the different types of dynamics found in the different parts of Clusters 4 and~5 found in the first analysis (see Figure~\ref{fig:learn25d_h10}).

Clusters 2 and~4 have considerably lower values of NFRRV and higher median FRR than were found in the other clusters. Their location is just below the possible chaotic dynamics found in the numerical simulations (see Figure~\ref{fig:n25dsize} in Appendix~\ref{app:simulations}) and thus the dynamics might be more similar to what was found in the Van der Pol oscillator (see Figure~\ref{fig:recVanderpol}).

\begin{table}[htb]
\begin{tabular}{cccc}
\hline
cluster & size & NFRRV & median FRR \\
\hline
1 & 1133 & $1.378 \pm 0.215$ & $22.675 \pm 0.648$ \\
2 & 123 & $0.753 \pm 0.046$ & $35.203 \pm 0.584$ \\
3 & 752 & $0.927 \pm 0.080$ & $31.134 \pm 1.479$ \\
4 & 327 & $0.635 \pm 0.046$ & $38.780 \pm 1.384$ \\
5 & 315 & $1.287 \pm 0.067$ & $34.032 \pm 1.060$ \\
6 & 116 & $1.084 \pm 0.045$ & $40.000 \pm 0.643$ \\
\hline
\end{tabular}
\vskip 6pt
\caption{\label{tab:clusters} Characteristics of the clusters found using the following two variables: NFRRV and median FRR. The size is given in the number of parameter boxes, the other values are provided as mean $\pm$ standard deviation.}
\end{table}


\section{Final remarks}
\label{sec:conclusion}

To sum up, the method that we propose in this paper allows one to obtain an overview of global dynamics in a given system, to arrange the set of parameters depending on the type and complexity of dynamics they yield, and to compare the observed dynamics to other known systems. In particular, it is possible to identify ranges of parameters in which complicated dynamics is likely to occur. All this can be done through automated computation that requires little human effort.

We would like to emphasize the fact that the mathematical framework and the software introduced in this paper are not limited to dimension two. However, we showed an application of this method to a two-dimensional system with two varying parameters for the sake of the ease of visualization of the results. An important objective for further work might thus be to understand the results of the computations without the need for visualizing all the details. The idea of Conley-Morse graphs meets this goal as far as understanding the global dynamics in the phase space is considered. However, getting hold of the interplay between changes in the various parameters of the system and the corresponding changes in the dynamics might be a more demanding task.

By applying this method to the analysis of the Chialvo model, we obtained a comprehensive overview of its global dynamics across a wide range of parameters. We also used a heuristically justified method based on the computation of recurrence to distinguish between the occurrence of periodic and chaotic dynamics.
\label{rev:slowFast2}
Note that the method introduced in \cite{Arai} does not in principle apply well to fast-slow systems, because it generally captures the ``fast'' dynamics only due to its nature. However, by adding Finite Resolution Recurrence analysis combined with machine learning, we were able to somewhat distinguish between different kinds of ``slow'' dynamics. We believe that it is worth to conduct further research in this direction.

The challenge for the future research would be to provide a complete mosaic of bifurcation patterns. As it was illustrated in Figure \ref{fig:traj4}, this would be a demanding task, and thus we may repeat after Chialvo \cite{Chialvo} that more ``work is still needed to fully understand the bifurcation structure of Equation~\eqref{main}.''


\section*{Acknowledgements}
This research was supported by the National Science Centre, Poland, within the following grants:
Sheng~1 2018/30/Q/ST1/00228 (for G.G.),
SONATA 2019/35/D/ST1/02253 (for J.S-R.),
and OPUS 2021/41/B/ST1/00405 (for P.P.).
J. S-R. also acknowledges the support of Dioscuri program initiated by the Max Planck Society, jointly managed with the National Science Centre (Poland), and mutually funded by the Polish Ministry of Science and Higher Education and the German Federal Ministry of Education and Research.
Computations were carried out at the Centre of Informatics Tricity Academic Supercomputer \& Network.
The authors would like to express their gratitude to the anonymous reviewers for their constructive remarks that helped improve the quality of the paper.


\section*{Authors' Declarations}

\subsection*{Conflict of Interest}

The authors have no conflicts to disclose.

\subsection*{Author Contributions}

\textbf{Pawe\l \ Pilarczyk:} Conceptualization (equal);
Data curation (lead);
Formal analysis (equal);
Funding acquisition  (equal);
Investigation (equal);
Methodology (equal);
Software (lead);
Visualization (equal);
Validation (equal);
Writing – original draft (equal);
Writing – review \& editing  (equal). 

\textbf{Justyna Signerska-Rynkowska:} Conceptualization (equal);
Formal analysis (equal);
Funding acquisition  (equal);
Investigation (equal);
Methodology (equal);
Visualization (equal);
Validation (equal);
Writing – original draft (equal);
Writing – review \& editing  (equal).   

\textbf{Grzegorz Graff:} Conceptualization (equal);
Formal analysis (equal);
Funding acquisition  (equal);
Investigation (equal);
Methodology (equal);
Validation (equal);
Writing – original draft (equal);
Writing – review \& editing  (equal). 

\subsection*{Data Availability} 
The data upon which our results are based is publicly and freely available at~\cite{DATA}, and an interactive browser of the data together with the source code of the software is available at~\cite{www}.


\pagebreak


\appendix

\section{Algorithmic computation of recurrence times}
\label{app:recurrence}

This appendix contains an effective algorithm that we propose for the computation of Finite Resolution Recurrence (FRR) introduced in Section~\ref{sec:frr}, proof of its correctness, and analysis of its computational complexity.

If $\cN$ is a numerical Morse set then one can restrict the codomain of the multivalued map $\cF$ on $\cG(B)$ to $\cN$ in order to obtain the multivalued map $\cF \colon \cN \multimap \cN$ (with some images possibly empty).
Let $G = (V,E)$ be the directed graph that represents this multivalued map. The recurrence time of $Q \in \cN$ is the length of the shortest non-trivial cycle passing through $Q$. If $\cN$ is a strongly connected path component of the graph $G$ then $\rec (Q)$ is finite.
The following algorithm allows one to compute the recurrence times for all the elements of $\cN$ effectively:

\begin{algorithm}[computation of recurrence times]\noindent\label{alg:rec}
\rm
\begin{tabbing}
\hspace{.5 cm}\= \hspace{.5 cm}\= \hspace{.5 cm}\= \hspace{.5 cm}\= %
\hspace{.5 cm}\= \hspace{.5 cm}\= \hspace{.5 cm}\= \hspace{.5 cm}\= \kill
\kw{function} recurrence\textunderscore{}times \\
\kw{input:} \\
\> $G = (V,E)$: a directed graph \\
\kw{begin} \\
\> \kw{for each} $v \in V$: \\
\>\> \kw{for each} $u \in V$: \\
\>\>\> $D_{v u} := $ length of the shortest path in $G$ from $v$ to $u$ \\
\> \kw{for each} $v \in V$: \\
\>\> \kw{if} $(v,v) \in E$ \kw{then} \\
\>\>\> $\rec(v) := 1$ \\
\>\> \kw{else} \\
\>\>\> $\rec(v) := \min \{D_{v u} + D_{u v} : u \in V\}$ \\
\kw{end.}
\end{tabbing}
\end{algorithm}

In this algorithm, one first computes the shortest paths from every vertex $v \in V$ to every other vertex $u \in V$. Then two possibilities are considered. If there exists a self-loop from $v \in V$ to itself then $1$ is recorded as the recurrence time for $v$. Otherwise, all the other loops go through some other vertex $u \in V$, and the shortest possibilities are explored to determine an optimal non-trivial loop from $v$ back to itself.

\begin{theorem}[correctness and complexity of computation of recurrence times]
\label{thm:rec}
Algorithm~\ref{alg:rec} called with a directed graph $G = (V,E)$ computes the recurrence times for all the vertices $v \in V$. It can reach this goal within as little as $O(|V||E| + |V|^2 \log |V|)$ time and using $O(|V|^2)$ memory.
\end{theorem}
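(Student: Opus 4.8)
The plan is to separate the claim into a correctness part and a complexity part, and to reduce the complexity part to a standard application of Dijkstra's algorithm run from each vertex.

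For correctness, I would argue as follows. First observe that $\rec(v)$, as defined, is exactly the length of the shortest nontrivial cycle in $G$ through $v$ (a cycle of nonzero length starting and ending at $v$). If $(v,v)\in E$ then such a cycle of length $1$ exists, and none shorter is possible, so $\rec(v)=1$; this matches the first branch of the algorithm. Otherwise every nontrivial cycle through $v$ has length at least $2$ and passes through at least one vertex $u\neq v$. Given such a cycle, split it at $u$ into a walk from $v$ to $u$ and a walk from $u$ back to $v$; the first has length at least $D_{vu}$ and the second at least $D_{uv}$, so the cycle has length at least $D_{vu}+D_{uv}$, hence $\rec(v)\ge \min_{u}\,(D_{vu}+D_{uv})$ where the minimum may be taken over $u\neq v$ (and including $u=v$ gives $D_{vv}+D_{vv}=0$ only in the degenerate self-loop case already excluded, so it does no harm). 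Conversely, for the minimizing $u$, concatenating a shortest $v\to u$ path with a shortest $u\to v$ path yields a closed walk through $v$ of length $D_{vu}+D_{uv}$; since $u\neq v$ and both paths are nonempty, this walk is nontrivial, so it witnesses $\rec(v)\le D_{vu}+D_{uv}$. Combining the two inequalities gives equality, which is precisely the value assigned in the \textbf{else} branch. If no nontrivial cycle through $v$ exists, then for every $u$ either $D_{vu}$ or $D_{uv}$ is $\infty$ (with the convention $\infty+\text{anything}=\infty$), so the algorithm returns $\infty=\min\emptyset$, again matching the definition.

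For the complexity bound, the nontrivial work is the all-pairs shortest path table $D$. Since $G$ is a directed graph with unit edge lengths (or, more generally, nonnegative lengths), for each fixed source $v$ one computes single-source shortest paths to all $u$ by Dijkstra's algorithm implemented with a Fibonacci heap, which runs in $O(|E|+|V|\log|V|)$ time; doing this for all $|V|$ sources gives $O(|V||E|+|V|^2\log|V|)$. (For unit weights one could alternatively use BFS for $O(|V||E|)$, but the stated bound is already what is claimed.) Storing the table $D$ requires $O(|V|^2)$ memory, which dominates the $O(|V|+|E|)$ needed to store $G$ itself on a sparse graph; if $|E|$ were larger one would note $|E|\le|V|^2$ anyway. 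The final two loops of the algorithm cost $O(|V|)$ for the self-loop checks (assuming edge-membership queries are $O(1)$ after preprocessing, or absorbed into the graph representation) and $O(|V|^2)$ for the minimizations $\min_u(D_{vu}+D_{uv})$ over all pairs, both within the claimed bounds. Hence the total is $O(|V||E|+|V|^2\log|V|)$ time and $O(|V|^2)$ memory.

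The main obstacle, such as it is, is not mathematical depth but bookkeeping: being careful about the $\min\emptyset=\infty$ convention (so that vertices lying on no cycle are handled uniformly through $\infty$-arithmetic), and making sure the case split at $u=v$ in the minimization is harmless — i.e.\ that allowing $u=v$ in $\min\{D_{vu}+D_{uv}\}$ does not accidentally return $0$. Since $D_{vv}=0$ always, one must justify that the \textbf{else} branch is only reached when $(v,v)\notin E$, in which case the shortest genuine cycle through $v$ has length $\ge 2$ and is correctly captured by some $u\neq v$; including $u=v$ contributes the value $0$, so strictly one should either restrict the minimum to $u\neq v$ or argue that $0$ can never be the true recurrence time in that branch and is therefore dominated once at least one finite cycle exists — but if \emph{no} cycle exists the intended answer is $\infty$, not $0$, so the restriction $u\neq v$ (or equivalently defining $D_{vv}$ via shortest \emph{nontrivial} closed walks) is the clean way to state it. I would add one sentence to the algorithm or its analysis to pin this down, and the rest is routine.
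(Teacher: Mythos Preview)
Your proposal is correct and follows essentially the same approach as the paper: correctness via the self-loop/other-vertex dichotomy, and complexity via Dijkstra with a Fibonacci heap run from every source plus the $O(|V|^2)$ cost of the minimization and of storing $D$. You are in fact more careful than the paper on one point: the paper's \textbf{else} branch takes $\min\{D_{vu}+D_{uv}:u\in V\}$ without excluding $u=v$, and its proof does not address the resulting $D_{vv}+D_{vv}=0$ issue you flag; your remark that the minimum should be restricted to $u\neq v$ (or $D_{vv}$ redefined) is a genuine clarification, not a deviation.
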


\begin{proof}
Let us first notice that Algorithm~\ref{alg:rec} computes the recurrence times for all the vertices. Indeed, a cycle through a vertex $v \in V$ is either a self-loop (of length 1), or a cycle that runs through another vertex $u \in V$. Then the algorithm clearly computes the minimum length of any nontrivial cycle that runs through $v$.

Let us now analyze the computational complexity of Algorithm~\ref{alg:rec}. Dijkstra's algorithm can be used to compute simultaneously all the lengths of the shortest paths that start at a fixed vertex $v \in V$. Its time complexity is $O(|E| + |V| \log |V|)$ when implemented using the Fibonacci heap. Therefore, the time complexity of the first loop is $O(|V||E| + |V|^2 \log |V|)$, because Dijkstra's algorithm must be run for every vertex $v \in V$ as a source. The time complexity of the second loop is $O(|V|^2)$, because its body is run for every $v \in V$, and the minimum is computed over all $u \in V$.

The memory complexity is $O(|V|^2)$, or even $\Theta(|V|^2)$, because the entire matrix $D$ must be stored after having been computed in the first loop for the purpose of being used in the second loop.
\end{proof}


\section{Details of the automatic analysis of global dynamics in the Chialvo model}
\label{app:continuation}

This appendix gathers the technical details of application of the methods introduced in Sections \ref{sec:glob} and~\ref{sec:indiv} to the Chialvo model of a neuron explained in Section~\ref{sec:model}. The results are discussed in Sections~\ref{sec:appl}--\ref{sec:sizes}.

The computations were carried out at the Centre of Informatics Tricity Academic Supercomputer \& Network (\url{https://task.gda.pl/en/}) on a cluster equipped with Intel\textsuperscript{\textregistered} Xeon\textsuperscript{\textregistered} CPU E5-2670 v3 at 2.30GHz running 64-bit GNU/Linux with kernel 3.10.0. In the paper we report accumulated CPU time in terms of CPU-hours, and we also indicate the maximum amount of memory used by each single process.

All the real numbers that we mention in the paper should be understood as approximations of the actual binary floating-point numbers stored in the computer. For example, if we say that we set a parameter value to $0.1$ then this means that the actual value used in the computations was a binary representation of $0.1$, which is close but not exactly equal $0.1$. Also, if we say that we obtained a value of $0.30$ then this means that the actual number obtained was a binary floating-point number whose approximation within the provided precision was $0.30$, that is, the number was in $[0.295,0.305]$.

An \label{implementation} implementation of the proposed method for the analysis of global dynamics is available at~\cite{www}.
The software can run on a single computer or on a cluster, using the parallelization scheme introduced in~\cite{Pil2010}; in particular, the software is capable of using multiple CPU cores if run on a single machine in an appropriate way.
The software uses the Computer Assisted Proofs in Dynamics (CAPD) library \cite{CAPD,CAPD2021} for rigorous numerics.
Conley indices are computed using the Computational Homology Project (CHomP) software \cite{CHomP} based on \cite{KMM2004} with further improvements \cite{MMP2005,PR2015,PS2008}.

We applied this method to the system \eqref{main} in the following setup.
We fixed the parameters $a := 0.89$ and $c := 0.28$, and we made the two other parameters vary: $(b,k) \in \Lambda_1 := [0,1] \times [0,0.2]$.
Our choice of the parameters and their ranges was based on what Chialvo indicated in his paper \cite{Chialvo}.
We took the $100 \times 100$ uniform rectangular grid in $\Lambda_1$ to compute the continuation diagram.

Determining a bounded rectangular subset $B$ of the phase space that contained the constructed numerical Morse decomposition in its interior consisted of two steps. We first did a series of numerical simulations to make sure that at least most prominent attractors would be captured. For that purpose, we took $30 \times 30 = 900$ uniformly spaced initial conditions from $[0,100] \times [-100,100]$, and we considered $11 \times 11 = 121$ uniformly spaced parameter values from the ranges of interest. For each of the $121$ parameter combinations, we iterated each of the $900$ initial conditions $10{,}000$ times by the map, and then we analyzed the ranges of the variables for the next $100$ iterations. We found out that $x \in [0.000,7.102]$ and $y \in [-0.575,2.545]$. In the second step, we took $B_0 := [-0.1,10] \times [-5,5]$, and we conducted low-resolution test computation with the set-oriented analysis method, using the $256 \times 256$ uniform grid in the phase space (without computation of Conley indices to save time). This computation took about $10$ minutes on a laptop PC. The constructed numerical Morse sets for all the parameter boxes considered were contained in $[-0.022,8.55] \times [-4.54,2.78]$. We thus took $B := [-0.1,9] \times [-5,3]$ for the final computation. We remark that it would be ideal to obtain some bounds for the recurrent dynamics analytically, but the only obvious bound in our case was $x \geq 0$, and the absorbing region $D^+$ determined in \cite[p.~1643]{courbage2010} turned out not to contain all the recurrent dynamics we were interested in and thus could not be used for our purpose.

Once the set $B$ has been fixed, we conducted the computations with the $1024 \times 1024$ uniform rectangular grid in $B$. We explain that due to the gradual refinements scheme \cite[\S 4.2]{Arai}, the phase space must be subdivided into a $d \times \cdots \times d$ uniform rectangular grid, where $d$ is a power of $2$. The computation completed within $29$ CPU-hours, and each process used no more than $1.8$~GB of memory. All the numerical Morse sets fit in $[-0.003,7.24] \times [-1.35,2.58]$.
Between $1$ and $4$ numerical Morse sets were found in each numerical Morse decomposition, typically $1.24\pm 0.5$ (average $\pm$ standard deviation).
Either one or two of them were attracting, typically $1.015 \pm 0.122$.
The sizes of the total of $12{,}378$ Morse sets were between $1$ and $106{,}978$ grid elements, typically $10{,}863 \pm 19{,}065$, with the median of $1{,}193$.
Ten continuation classes were found, including one consisting of a single parameter box.
The resulting continuation diagram is shown in Figure~\ref{fig:n09cont}, and the sizes of the constructed outer bounds for the chain recurrent set are shown in Figure~\ref{fig:sizeNeuron09}.

\begin{figure}[htbp]
\centerline{\includegraphics[width=0.8\textwidth]{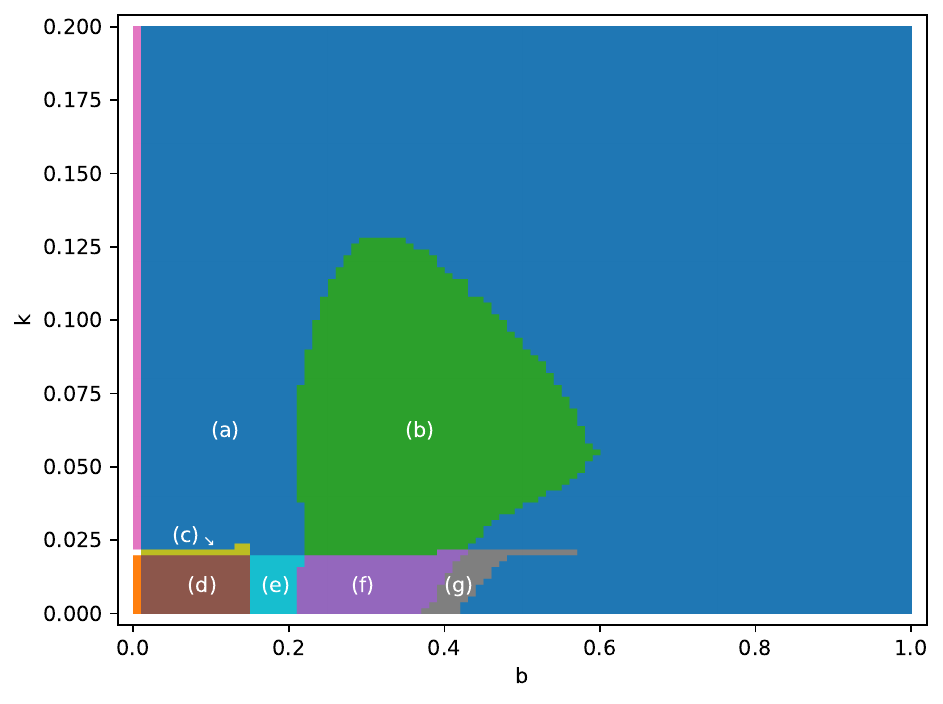}}
\caption{\label{fig:n09cont}%
Continuation diagram for the Chialvo model with $a = 0.89$, $c = 0.28$, and $(b,k) \in \Lambda_1 = [0,1] \times [0,0.2]$ split into the $100 \times 100$ uniform rectangular grid. The areas (a)--(g) correspond to the areas marked with the same labels in Figure~\ref{fig:n12cont}, but note that the colors are different in most cases, because the colors are assigned automatically to the classes by the software.}
\end{figure}

\label{sec:sizeNeuron09}

\begin{figure}[htbp]
\centering
\includegraphics[width=0.9\textwidth]{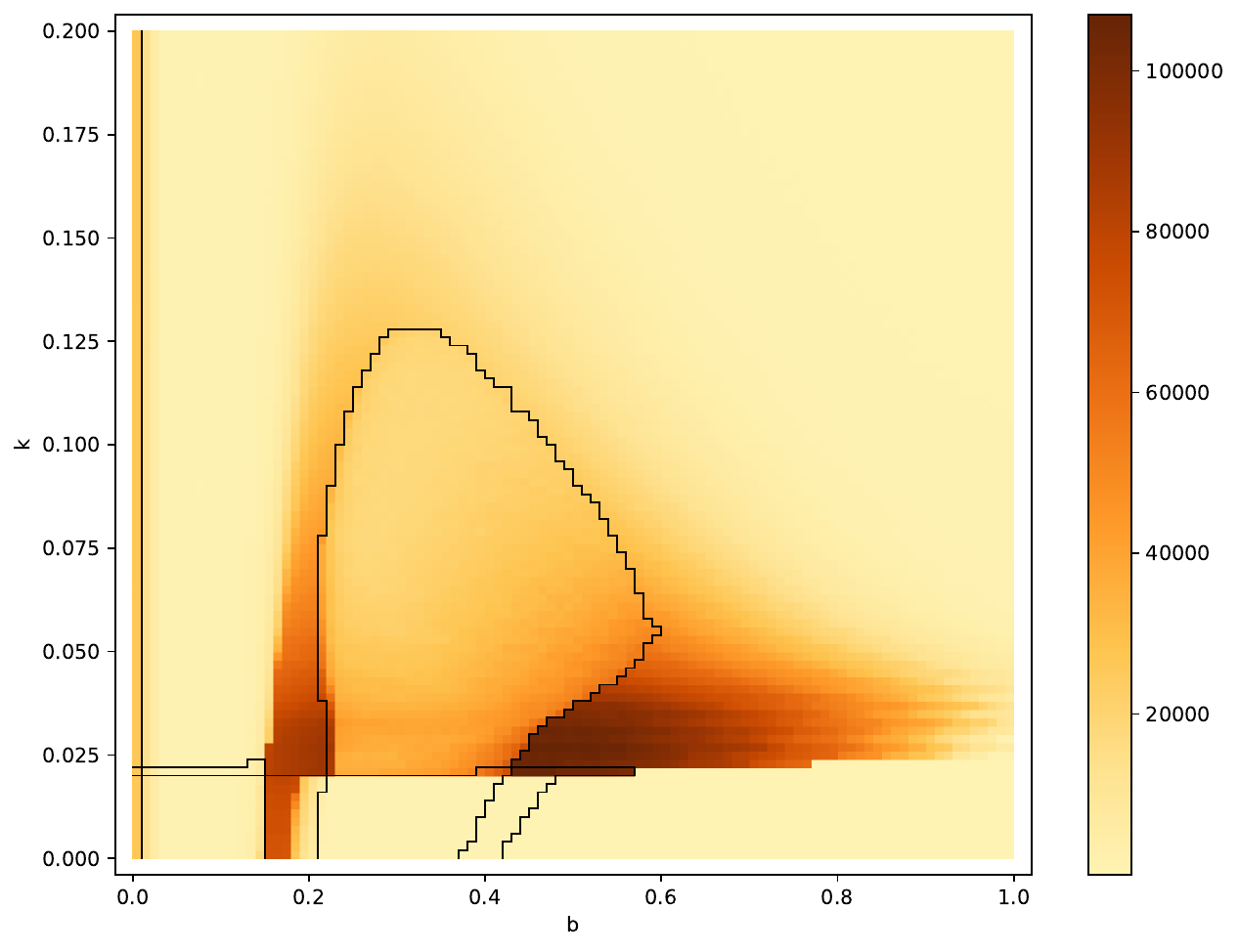}
\caption{\label{fig:sizeNeuron09}%
The size of the union of all the numerical Morse sets found in the phase space for the corresponding parameters in $\Lambda_1$ in the Chialvo model. The black lines indicate borders between different continuation classes; see Figure~\ref{fig:n09cont} for the corresponding continuation diagram.}
\end{figure}

Since most changes in global dynamics appear to take place for a limited range of~$k$, we conducted an analogous computation with the range of parameters limited to \label{Lambda} $(b,k) \in \Lambda_2 := [0,1] \times [0.015,0.030]\subset \Lambda_1$. The set $\Lambda_2$ was split into the $200 \times 75$ uniform rectangular grid. The computation took a little over $34$ CPU-hours and memory usage did not exceed $396$~MB per process. All the numerical Morse sets were contained in $[0.015,7.07] \times [-1.33,2.56]$. The corresponding continuation diagram is shown in Figure~\ref{fig:n12cont}. Note, however, that obviously the colors assigned automatically to the continuation classes differ from those shown in Figure~\ref{fig:n09cont}. Unfortunately, this lack of color match between different continuation diagrams is unavoidable. One of the reasons is that one class in a diagram computed at some scale may split into two or more classes in the diagram computed at a different scale; moreover, two classes may not be related by continuation even if the numbers and stability types of numerical Morse sets are the same, so giving an impression that colors correspond to types of dynamics might be
deceiving.
\label{rev:colorMismatch}


\FloatBarrier
\section{Attractor sizes in numerical simulations}
\label{app:simulations}

In order to find regions of parameters for which chaotic attractors might exist, we conducted the following numerical simulations aimed at detecting large attractors.

We took a set of evenly spaced $400 \times 400$ values of $(b,k) \in \Lambda_3 = [0,0.5] \times [0.017,0.027]$. For each of these parameter values, with $a := 0.89$ and $c := 0.28$ fixed, we computed four trajectories in the Chialvo model \eqref{main}, starting at $(0.01,0.01)$, $(1.0, 1.0)$, $(10.0, 2.0)$, and $(2.0, 10.0)$, respectively. We discarded the first $10{,}000$ iterations of each trajectory in order to make it settle on a hypothetical attractor, and then we recorded the results of the next $10{,}000$ iterations. In this way, for each of the $400 \times 400$ values of the parameters $(b_i,k_j) \in \Lambda_3$, we obtained four sequences of points indexed by $l=1,\ldots,4$. Let $A_{i,j,l}$ denote the union of the points in each sequence. Each of the sets $A_{i,j,l}$ serves as a numerical approximation of some attractor found in the system.

We were interested in measuring the size of each attractor that we found, in hope for numerically detecting some chaotic attractors. For that purpose, we applied an approach similar to one step of the computation of Hausdorff dimension of the set. Namely, we considered the uniform rectangular grid $\cG$ at the resolution $4 \times 4$ times finer than in the computation upon which Figure~\ref{fig:varNeuron} was based. For each set $A_{i,j,l}$, we constructed the collection $\cN_{i,j,l}$ of those boxes in $\cG$ that contain at least one point from $A_{i,j,l}$. The set $\cN_{i,j,l}$ could be understood as the minimal covering of $A_{i,j,l}$ with respect to $\cG$, or a plot of $A_{i,j,l}$ at the resolution defined by $\cG$. We then computed $C_{i,j,l} := \card \cN_{i,j,l}$. In Figure~\ref{fig:n25dsize}, we show the value of $C_{i,j} := \max \{C_{i,j,l} : l = 1, \ldots, 4\}$ for each parameter selection $(b_i,k_j)$ separately.

Let us now refer to Figure~\ref{fig:traj4} with the four types of orbits present in the Chialvo model. Note that computing the diameter of the constructed attractor alone would not be very helpful in distinguishing between these types, because a periodic spiking oscillation would yield similar values as a chaotic attractor. This is why we computed the cover $\cN_{i,j,l}$ instead. Indeed, the seemingly chaotic orbit shown in Figure~\ref{fig:traj4} in blue fills a large area in the phase space, and thus would yield a large value of $C_{i,j,l}$. In contrast to this, the long periodic orbit shown in that figure in red would yield a moderate value of $C_{i,j,l}$.

To sum up, we consider it justified to use the results of these simulations in Section~\ref{sec:recTool} as a reference to the regions of parameters where chaotic dynamics might be present. Although this is not strictly proof, one can call it strong numerical evidence.

\begin{figure}[htbp]
\includegraphics[height=0.77\textwidth]{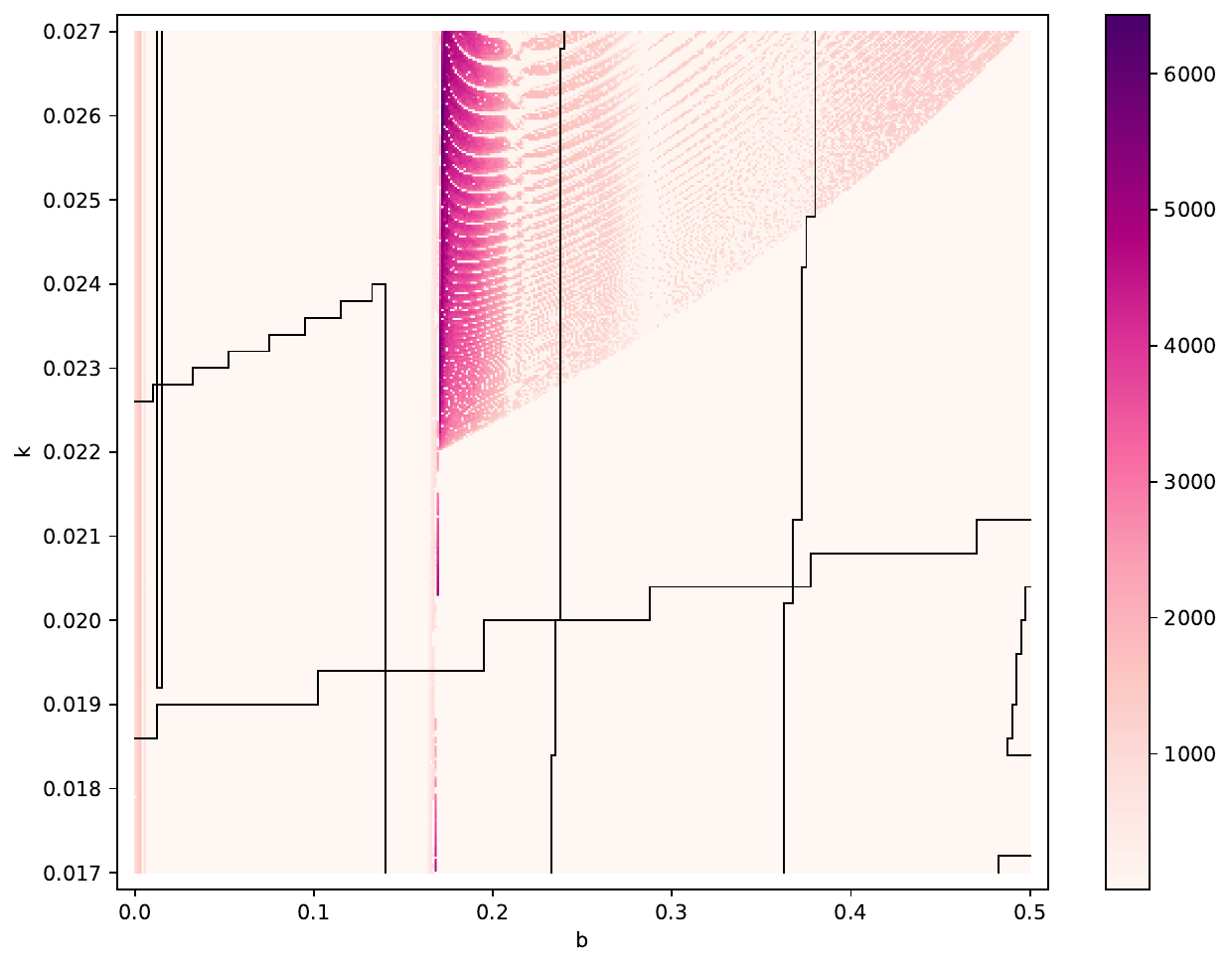}
\caption{\label{fig:n25dsize}%
Maximal size of a cover of an attractor found in numerical simulations for the Chialvo model \eqref{main} with $a = 0.89$, $b \in [0,0.5]$ (horizontal axis, $400$ individual values), $c = 0.28$, and $k \in [0.017,0.027]$ (vertical axis, $400$ individual values). The black lines indicate borders between different continuation classes.}
\end{figure}


\begin{thebibliography}{00}

\bibitem{Arai}
Z. Arai, W. Kalies, H. Kokubu, K. Mischaikow, H. Oka, P. Pilarczyk.
A database schema for the analysis of global dynamics of multiparameter systems.
\textit{SIAM J.\ Appl.\ Dyn.\ Syst.} 8 (2009), 757--789.
\doi{10.1137/080734935}

\bibitem{BK2006}
H. Ban, W. Kalies.
A computational approach to Conley's decomposition theorem.
\textit{Journal of Computational and Nonlinear Dynamics} 1 (2006), 312--319.
\doi{10.1115/1.2338651}

\bibitem{BretteGerstner}
R.~Brette, W.~Gerstner.
Adaptive exponential integrate-and-fire model as an effective description of neuronal activity.
\textit{J. Neurophysiol.} 94 (2005), 3637--3642.
\doi{10.1152/jn.00686.2005}

\bibitem{Burkitt2006}
A.N.~Burkitt.
A Review of the Integrate-and-fire Neuron Model: I. Homogeneous Synaptic Input.
\textit{Biol Cybern} 95 (2006), 1--19. 
\doi{10.1007/s00422-006-0068-6}

\bibitem{Chialvo}
D. Chialvo.
Generic excitable dynamics on a two-dimensional map.
\textit{Chaos, Solitons \& Fractals} 5 (1995), 461--479.
\doi{10.1016/0960-0779(93)E0056-H}

\bibitem{CHomP}
Computational Homology Project,
\url{https://www.pawelpilarczyk.com/chomp/}
(accessed on January 13, 2023).

\bibitem{CAPD}
Computer Assisted Proofs in Dynamics group,
\url{http://capd.ii.uj.edu.pl/}
(accessed on January 13, 2023).

\bibitem{Conley}
C. Conley.
\textit{Isolated invariant sets and the Morse index.}
CBMS Regional Conference Series in Mathematics Vol. 38,
American Mathematical Society, Providence, R.I., 1978.

\bibitem{courbage2010}
M. Courbage, V.I. Nekorkin.
Map based models in neurodynamics.
\textit{Internat. J. Bifur. Chaos Appl. Sci. Engrg.} 20 (2010), 1631--1651.
\doi{10.1142/S0218127410026733}

\bibitem{GAIO}
M. Dellnitz, G. Froyland, O. Junge.
The algorithms behind GAIO---set oriented numerical methods for dynamical systems. In \textit{Ergodic Theory, Analysis, and Efficient Simulation of Dynamical Systems}, B. Fiedler, ed., Springer-Verlag, New York, 2001, pp. 145--174.
\doi{10.1007/978-3-642-56589-2_7}

\bibitem{Henon}
M. H\'{e}non.
A two-dimensional mapping with a strange attractor.
\textit{Communications of Mathematical Physics} 50 (1976) 69--77.
\doi{10.1007/BF01608556}

\bibitem{Hodgkin}
A.L. Hodgkin, A.F. Huxley.
A quantitative description of membrane current and its application to conduction and excitation in nerve.
\textit{J. Physiol.} 117 (1952) 500--544.
\doi{10.1113/jphysiol.1952.sp004764}

\bibitem{ibarz2011}
B.~Ibarz, J.M.~Casado, M.A.F.~Sanju{\'{a}}n.
Map-based models in neuronal dynamics.
\textit{Physics Reports} 501 (2011), 1--74
\doi{10.1016/j.physrep.2010.12.003}

\bibitem{izi2003}
E.~Izhikevich.
Simple model of spiking neurons.
\textit{IEEE Trans. Neural Netw.} 14 (2003), 1569--1572.
\doi{10.1109/TNN.2003.820440}

\bibitem{Jing}
Z. Jing, J. Yang, W. Feng.
Bifurcation and chaos in neural excitable system.
\textit{Chaos, Solitons \& Fractals} 27 (2006), 197--215.
\doi{10.1016/j.chaos.2005.04.060}

\bibitem{KMM2004}
T. Kaczynski, K. Mischaikow, and M. Mrozek.
\textit{Computational homology}.
Volume 157 of \textit{Applied Mathematical Sciences},
Springer-Verlag, New York, 2004.

\bibitem{KMV2005}
W.D. Kalies, K. Mischaikow, R.C.A.M. VanderVorst.
An algorithmic approach to chain recurrence.
\textit{Found.\ Comput.\ Math.} 5 (2005), 409--449.
\doi{10.1007/s10208-004-0163-9}

\bibitem{CAPD2021}
T. Kapela, M. Mrozek, D. Wilczak, P. Zgliczyński.
CAPD::DynSys: a flexible C++ toolbox for rigorous numerical analysis of dynamical systems.
\textit{Communications in Nonlinear Science and Numerical Simulation} 101 (2021), 105578.
\doi{10.1016/j.cnsns.2020.105578}

\bibitem{Knipl}
D.H. Knipl, P. Pilarczyk, G. R\"{o}st.
Rich bifurcation structure in a two-patch vaccination model.
\textit{SIAM J.\ Appl.\ Dyn.\ Syst.} 14 (2015), 980--1017.
\doi{10.1137/140993934}

\bibitem{Liao2018}
K.-L.~Liao, C.-W.~Shih, C.-J.~Yu.
The snapback repellers for chaos in multi-dimensional maps.  
\textit{J. Comput. Dyn.} 5 (2018), no. 1--2, 81--92. 
\doi{10.3934/jcd.2018004}

\bibitem{Chialvo1dim}
F.~Llovera Trujillo, J.~Signerska-Rynkowska, P.~Bart\l omiejczyk.
Periodic and chaotic dynamics in a map-based neuron model.
\textit{submitted} 
\doi{10.48550/arXiv.2111.14499}

\bibitem{Rplots}
N. Marwan, C.L. Webber.
Mathematical and Computational Foundations of Recurrence Quantifications. In
Webber, Jr. C., Marwan N. (eds) \textit{Recurrence Quantification Analysis. Understanding Complex Systems.} (2015) Springer, Cham.
\doi{10.1007/978-3-319-07155-8_1}

\bibitem{Milnor} J. Milnor, On the concept of attractor, \textit{Commun.\ Math.\ Phys}. 99 (1985), 177--195. 
\doi{10.1007/BF01212280}

\bibitem{MMP2005}
K. Mischaikow, M. Mrozek, P. Pilarczyk.
Graph approach to the computation of the homology of continuous maps.
\textit{Found.\ Comput.\ Math.} 5 (2005), 199--229.
\doi{10.1007/s10208-004-0125-2}

\bibitem{DATA}
P. Pilarczyk. A database of global dynamics for a two-dimensional discrete neuron model [Data set series]. Gdańsk University of Technology, 2023.
\doi{10.34808/0wqa-wa87}
\doi{10.34808/wma6-se39}
\doi{10.34808/xh6g-hr68}
\doi{10.34808/18a8-7q15}
\doi{10.34808/5b59-ha87}
\doi{10.34808/49pe-6s72}
\doi{10.34808/0b3t-p043}

\bibitem{Pil2010}
P. Pilarczyk.
Parallelization method for a continuous property. \textit{Found.\ Comput.\ Math.} 10 (2010), 93--114.
\doi{10.1007/s10208-009-9050-8}

\bibitem{www}
P. Pilarczyk. Topological-numerical analysis of a two-dimensional discrete neuron model.
Data and software (2022).
\url{http://www.pawelpilarczyk.com/neuron/}
(accessed on January 13, 2023).

\bibitem{PR2015}
P. Pilarczyk, P. Real.
Computation of cubical homology, cohomology, and (co)homological operations via chain contraction. \emph{Adv.\ Comput.\ Math.} 41 (2015), 253--275.
\doi{10.1007/s10444-014-9356-1}

\bibitem{PS2008}
P. Pilarczyk, K. Stolot.
Excision-preserving cubical approach
to the algorithmic computation of the discrete Conley index.
\textit{Topology Appl.} 155 (2008), 1149--1162.
\doi{10.1016/j.topol.2008.02.003}

\bibitem{wild1}
J.E.~Rubin, J.~Signerska-Rynkowska, J.~Touboul, A.~Vidal.
Wild oscillations in a nonlinear neuron model with resets: (I) Bursting, spike-adding, and chaos.
\textit{Discrete Contin. Dyn. Syst. Ser. B}. 22 (2017), 3967--4002.
\doi{10.3934/dcdsb.2017204}

\bibitem{typeIII}
J.E.~Rubin, J.~Signerska-Rynkowska, J.~Touboul.
Type III responses to transient inputs in hybrid nonlinear neuron models.
\textit{SIAM J. Appl. Dyn. Syst.} 20 (2021), 953--980.
\doi{10.1137/20M1354970}

\bibitem{rulkov2001}
N.F.~Rulkov.
Regularization of synchronized chaotic bursts.
\textit{Phys. Rev. Lett.} 86 (2001), 183--186.
\doi{10.1103/PhysRevLett.86.183}

\bibitem{rulkov2002}
N.F.~Rulkov.
Modeling of spiking-bursting neural behaviour using two-dimensional map.
\textit{Physical Review E}  65 (2002), 041922.
\doi{10.1103/PhysRevE.65.041922}

\bibitem{ShilnikovRulkov2004}
A.L.~Shilnikov, N.F.~Rulkov.
Subthreshold oscillations in a map-based neuron model.
\textit{Phys. Lett. A} 328 (2004), 177--184.
\doi{10.1016/j.physleta.2004.05.062}

\bibitem{mma}
J.~Signerska-Rynkowska.
Analysis of interspike-intervals for the general class of integrate-and-fire models with periodic drive.
\textit{Math. Model. Anal.} 20 (2015),  529--551.
\doi{10.3846/13926292.2015.1085459}

\bibitem{Szymczak95}
A.~Szymczak.
The Conley index for discrete semidynamical systems.
\textit{Topology Appl.} 66 (1995), 215--240.
\doi{10.1016/0166-8641(95)0003J-S}

\bibitem{torres2021}
N.~Torres, M.J.~C\'{a}ceres, B.~Perthame, D.~Salort.
An elapsed time model for strongly coupled inhibitory and excitatory neural networks.
\textit{Physica D: Nonlinear Phenomena} 425 (2021), 132977.
\doi{10.1016/j.physd.2021.132977}

\bibitem{touboul2009}
J.~Touboul, R.~Brette.
Spiking dynamics of bidimensional integrate-and-fire neurons.
\textit{SIAM J. Appl. Dyn. Syst.} 8 (2009), 1462--1506.
\doi{10.1137/080742762}

\bibitem{VitaliVar}
Vitali variation. Encyclopedia of Mathematics.
\url{http://encyclopediaofmath.org/index.php?title=Vitali_variation&oldid=44408}
(accessed on January 13, 2023)

\bibitem{NewPaperOnChialvo}
F.~Wang, H.~Cao.
Mode locking and quasiperiodicity in a discrete-time Chialvo neuron model.
\textit{Commun. Nonlinear Sci. Numer. Simul.} 56 (2018), 481--489.
\doi{10.1016/j.cnsns.2017.08.027}

\bibitem{Zhao2009}
Y.~Zhao, L.~Xie, K.F.C.~Lingli.
An improvement on Marotto's theorem and its applications to chaotification of switching systems.  
\textit{Chaos Solitons Fractals} 39 (2009), no. 5, 2225--2232.
\doi{10.1016/j.chaos.2007.06.109}

\end{thebibliography}
\end{document}